\newcommand{\abbr}[1]{{\sc\lowercase{#1}}}
\providecommand{\U}[1]{\protect\rule{.1in}{.1in}}
\newtheorem{theorem}{Theorem}
\newtheorem{condition}[theorem]{Condition}
\newtheorem{corollary}[theorem]{Corollary}
\newtheorem{definition}[theorem]{Definition}
\newtheorem{lemma}[theorem]{Lemma}
\newtheorem{proposition}[theorem]{Proposition}
\newtheorem{remark}[theorem]{Remark}
\newenvironment{proof}[1][Proof]{\noindent\textbf{#1.} }{\ \rule{0.5em}{0.5em}}
\newcommand{\red}[1]{{\color{red} #1}}
\newcommand{\E}{\mathbb{E}}
\newcommand{\K}{\mathbb{K}}
\newcommand{\M}{\mathbb{M}}
\newcommand{\N}{\mathbb{N}}
	\renewcommand{\P}{\mathbb{P}}
\newcommand{\R}{\mathbb{R}}
\newcommand{\wt}{\widetilde}
\newcommand{\ovl}{\overline}
\newcommand{\ep}{\epsilon}
\newcommand{\cA}{\mathcal{A}}
\newcommand{\cD}{\mathcal{D}}
\newcommand{\cF}{\mathcal{F}}
\newcommand{\cG}{\mathcal{G}}
\newcommand{\cL}{\mathcal{L}}
\newcommand{\cM}{\mathcal{M}}
\newcommand{\cN}{\mathcal{N}}
\newcommand{\cR}{\mathcal{R}}
\begin{document}
\title{A non-inertial model for particle aggregation under turbulence \footnote{Keywords and phrases: particle coalescence; turbulent fluid; cell equation, scaling limit, Saffman-Turner formula.} \footnote{Mathematics Subject Classification: 60K35, 82C27, 60H15, 86A08.}}
\author{Franco Flandoli\footnote{Scuola Normale Superiore di Pisa. Piazza Dei Cavalieri 7, Pisa 56126, Italy. Email: franco.flandoli@sns.it.}   \footnote{Corresponding Author.}  \,  and Ruojun Huang\footnote{Universit\"at M\"unster. Einsteinstr. 62, M\"unster 48149, Germany. Email: ruojun.huang@uni-muenster.de.} }

\maketitle

\abstract{We consider an abstract non-inertial model of aggregation
under the influence of a Gaussian white noise with prescribed
space-covariance, and prove a formula for the mean collision rate $R$, per
unit of time and volume. Specializing the abstract theory to a non-inertial
model obtained by an inertial one, with physical constants, in the limit of
infinitesimal relaxation time of the particles, and the white noise obtained
as an approximation of a Gaussian noise with correlation time $\tau_{\eta}$,
up to approximations the formula reads $R\sim\tau_{\eta}\left\langle
\left\vert \Delta_{a}u\right\vert ^{2}\right\rangle a\cdot n^{2}$ where $n$ is
the particle number per unit of volume and $\left\langle \left\vert \Delta
_{a}u\right\vert ^{2}\right\rangle $ is the square-average of the increment of
random velocity field $u$ between points at distance $a$, the particle radius.
If we choose the Kolmogorov time scale $\tau_{\eta}\sim\left(  \frac{\nu
}{\varepsilon}\right)  ^{1/2}$ and we assume that $a$ is in the dissipative
range where $\left\langle \left\vert \Delta_{a}u\right\vert ^{2}\right\rangle
\sim\left(  \frac{\varepsilon}{\nu}\right)  a^{2}$, we get Saffman-Turner
formula for the collision rate $R$.}

\tableofcontents

\section{Introduction}
\subsection{The abstract mathematical model and the main
results\label{subsect math model and result}}

We consider non-inertial particles $x_{1}^{N}\left(  t\right)  ,...,x_{N}%
^{N}\left(  t\right)  $ in $\R^3$, subject to the dynamics%
\begin{equation}
\frac{dx_{i}^{N}\left(  t\right)  }{dt}=\sigma\partial_{t}W_{\epsilon/\xi
}\left(  x_{i}^{N}\left(  t\right)  ,t\right)  +\sigma_{0}\frac{dB_{i}\left(
t\right)  }{dt} \label{abstract SDE}%
\end{equation}
where $B_{i}\left(  t\right)  $ are $\R^3$-valued independent
Brownian motions, $\xi,\sigma,\sigma_{0}$ are positive real numbers, $W\left(
x,t\right)  $ is a Gaussian $\R^3$-valued random field, Brownian in
time
\[
\mathbb{E}\left[  W^{i}\left(  x,t\right)  W^{j}\left(  y,s\right)  \right]
=C^{i,j}\left(  x-y\right)  t\wedge s
\]
for $i,j=1,2,3$, where the covariance matrix $C\left(  x\right)  $ is
described in Section \ref{subsect covariance} below (hence $\partial
_{t}W\left(  x,t\right)  $ is a white noise in time) and $W_{\epsilon/\xi
}\left(  x,t\right)  =W\left(  \frac{\xi x}{\epsilon},t\right)  $.

The parameter $\epsilon$ is linked to $N$ by the relation
\begin{equation}
N\epsilon=1 \label{abstract scaling}%
\end{equation}
which describes a specific degree of sparseness of the system (finite mean
free path, in the Brownian case), described below in Section
\ref{subsect mean free path}. When we take the limit as $N\rightarrow\infty$,
hence $\epsilon\rightarrow0$, the noise covariance is rescaled with $\epsilon
$; hence, mathematically speaking, there would be no reason to rescale the
noise by $\epsilon/\xi$ instead of just $\epsilon$ (it is a matter of
redefining the covariance). But $\epsilon$, in the example described below in
Sections \ref{subsect inertial model}-\ref{subsect reduction to non inertial},
is linked to the particle radius, hence the viewpoint in the scaling limit
(\ref{abstract scaling}) is that the particle radius goes to zero meanwhile
the number of particles goes to infinity (roughly speaking, particles occupy a
bounded region, independently of $N$). The fact that also the noise scaling
parameter $\epsilon/\xi$ goes to zero is a subsequent main assumption, not the
structural link between the particles and the domain. We believe that for the
interpretation of the results it is important to think that we discuss a
scaling limit of a particle system, with a link (\ref{abstract scaling})
between number of particles and radius of interaction (a local interaction),
in which there is an additional element, a turbulent fluid, with certain
scaling properties related to the particle size.

When two particles $x_{i}^{N},x_{j}^{N}$, $i\neq j$, are sufficiently close,
they may disappear, in a Markovian way, with rate%
\begin{equation}
\frac{R_{0}}{N}\epsilon^{-d}\theta\left(  \epsilon^{-1}\left(  x_{i}^{N}%
-x_{j}^{N}\right)  \right)  \label{abstract rate}%
\end{equation}
where $\theta$ is a smooth compact support probability density function.
Therefore $\epsilon$ is linked to the interaction radius of the particles. The
positive number $R_{0}$ modulates the pairwise interaction rate;\ perhaps the
most interesting regime is when $R_{0}$ is very large, case where the final
average collision rate does not diverge to infinity with $R_{0}$ but converges
to finite value (thanks to the assumption (\ref{abstract scaling})).

We prove that the empirical distribution of the particle
system converges to a function $f\left(  x,t\right)  $, weak solution of the
equation%
\begin{equation}
\partial_{t}f=\frac{1}{2}\left(  \sigma^{2}+\sigma_{0}^{2}\right)  \Delta
f-\overline{\mathcal{R}}f^{2}.\label{PDE1}%
\end{equation}
The quantity $\overline{\mathcal{R}}$ is the key one to capture the average
collision rate but it is important to notice, in connection with real
applications, that what is usually called average collision rate, say $R$, in
Physics, namely the number of collisions per unit of time and volume, is
linked to $\overline{\mathcal{R}}$ by the formula%
\begin{equation}
R=\frac{\overline{\mathcal{R}}}{N}n^{2}\label{transf rule}%
\end{equation}
where $N$ is the total number of particles in the system and $n$ is the
average number of particles per unit of volume. Indeed, $f$ is the limit of a
normalized empirical distribution, hence the $Nf$ corresponds to $n$, then
equation (\ref{PDE1}) gives us (up to the redistribution term $\frac{1}%
{2}\left(  \sigma^{2}+\sigma_{0}^{2}\right)  \Delta f$), $\partial_{t}%
n\sim-\frac{\overline{\mathcal{R}}}{N}n^{2}$. In the abstract development of
the theory we shall call $\overline{\mathcal{R}}$, sometimes, collision rate,
but the correct transformation rule is (\ref{transf rule}).

In itself, equation (\ref{PDE1}) is a general form of quadratic limit
equations, valid also under different scaling limits than
\eqref{abstract scaling} and different dimensions. For instance, we get the same
equation if we perform a purely mean field limit with long range interaction
and then send to zero the range in the final PDE. But the main feature of the
scaling limit assumption \eqref{abstract scaling}, discovered by \cite{hr} (for
the model above without the term $\sigma\partial_{t}W_{\epsilon/\xi}\left(
x_{i}^{N}\left(  t\right)  ,t\right)  $)\ is that $\overline{\mathcal{R}}$ is
given by a non-trivial formula much different from the mean field case. One
has
\[
\overline{\mathcal{R}}=R_{0}\int\theta\left(  x\right)  \left(  1+u\left(
x\right)  \right)  dx
\]
where $u$ is the unique vanishing at infinity solution of class $C^{2}$ of the
equation%
\begin{equation}
\sigma_{0}^{2}\Delta u\left(  x\right)  +\sigma^{2}\operatorname{div}\left(
\omega\left(  \xi x\right)  \nabla u\left(  x\right)  \right)  =R_{0}\theta\left(
1+u\right)  \label{PDE2}%
\end{equation}
where
\[
\omega\left(  x\right)  =I_3-C\left(  x\right)  .
\]
Getting explicit information from this equation is not easy. But, in the limit
as $R_{0}\rightarrow\infty$, one also has%
\[
\overline{\mathcal{R}}\rightarrow\overline{\mathcal{R}}_{\infty}={\rm Cap}\left(
K_{\theta},A\right)
\]
where $K_{\theta}$ is the support of $\theta$ and%
\[
A\left(  x\right)  =\sigma_{0}^{2}I_3+\sigma^{2}\omega\left(  \xi x\right)  .
\]
The capacity ${\rm Cap}\left(  K_{\theta},A\right)  $ is defined as the infimum of
\[
\int_{\R^3}\nabla\phi^{T}\left(  x\right)  A\left(  x\right)
\nabla\phi\left(  x\right)  dx
\]
over all smooth functions $\phi\left(  x\right)  $, vanishing at infinity,
that are larger than 1 on a neighbor of $K_{\theta}$. This characterization is
more suitable for quantitative information (see Section
\ref{Subsect physical interpretation}). The limit as $R_{0}%
\rightarrow\infty$ is a very natural one, corresponding to ask for a certain
coalescence, when particles meet. In the sequel and in Section
\ref{Subsect physical interpretation} we shall always only use this characterization.

It is not easy to summarize in a sentence the intuition behind this result. A
key of interpretation could be that equation (\ref{PDE1}) contains the
infinitesimal generator of the ``1-particle motion'', while the differential
operator on the left-hand-side of (\ref{PDE2}) is like the infinitesimal
generator of the difference in position between two particles. Therefore
equation (\ref{PDE2}) contains information about the mutual behavior of
different particles.

With this remark, we may explain a deep fact that, at the beginning and with
the wrong eye, may even look counter-intuitive. First, notice that minimizing
a quadratic form like $\int_{\R^3}\nabla\phi^{T}\lambda^{2}%
I_3\nabla\phi dx$ produces a result of the form $C\lambda^{2}$. Therefore
the capacity ${\rm Cap}\left(  K_{\theta},A\right)  $ is (roughly speaking) linear
in the size of $A$. Therefore, in the case of the matrix function $A\left(
x\right)  =\sigma_{0}^{2}I_3+\sigma^{2}\omega\left(  \xi x\right)  $, it may look
strange that it increases linearly with $\sigma_{0}^{2}$, namely with the
noise term $\sigma_{0}\frac{dB_{i}\left(  t\right)  }{dt}$, and not linearly
with the covariance $\sigma^{2}C\left(  \xi x\right)  $, corresponding to the
other noise term; it increases with the opposite, $\omega\left(  \xi x\right)
=I_3-C\left(  \xi x\right)  $. The deep reason stands in the "two-particle
motion" of the equation (\ref{abstract SDE}). Two different particles
$x_{i}^{N}\left(  t\right)  ,x_{j}^{N}\left(  t\right)  $, $i\neq j$, are
driven by two independent Brownian motions $\sigma_{0}\frac{dB_{i}\left(
t\right)  }{dt}$ and $\sigma_{0}\frac{dB_{j}\left(  t\right)  }{dt}$ and by
the ``common noise'' $\sigma\partial_{t}W_{\epsilon/\xi}\left(  x,t\right)  $,
just evaluated at the two positions $x_{i}^{N}\left(  t\right)  ,x_{j}%
^{N}\left(  t\right)  $. If the correlation of the common noise at those
positions is very small, it also behaves like two independent Brownian
motions, hence its effect is similar to the one of $\sigma_{0}\frac
{dB_{i}\left(  t\right)  }{dt}$ and $\sigma_{0}\frac{dB_{j}\left(  t\right)
}{dt}$. And indeed, $\omega\left(  \xi x\right)  \sim I_3$. If the
space-correlation of the common noise, on the contrary, was very strong, it
should not contribute to change the distance of the two particles; and in this
case $\omega\left(  \xi x\right)  \sim0$. Therefore it is $\omega\left(  \xi x\right)
$, not $C\left(  \xi x\right)  $, which should be directly proportional to the
collision rate. In a sense, the term $\sigma_{0}^{2}I_3$ in $A\left(
x\right)  $ is there because of the independence of the ``internal'' Brownian
motions of particles, which is like a common noise with infinitesimal space correlation.

In equation (\ref{PDE1}), on the contrary, we have the infinitesimal generator
$\frac{1}{2}\left(  \sigma^{2}+\sigma_{0}^{2}\right)  \Delta$ of one-particle
motion, where $C\left(  \xi x\right)  |_{x=0}$ plays a role. Equation
(\ref{PDE1}) describes the particle density, in a sense the probability that a
single particle is here or there and alive, hence governed by the one-particle motion.

The motivation of the study of this paper comes from the physical theory of
aggregation of particles in turbulent fluids, see for instance  \cite{Abrahan,
Bec, Dou, Falkovich, Mehlig, Papini0, Papini, Pumir, Wilkin}. However, in such
a theory, particles are inertial, inertia being quantified by means of the
Stokes number $St$ (defined as the ratio between the relaxation time of the
particle over the relaxation time of a turbulent eddy). When inertia is large,
particles feel turbulence like a light pollutant feels molecules and thus
perform a Brownian motion with damping; more inertia, less collisions. On the
other side of the ``inertial scale of the particle'', when inertia is almost
zero, particles move almost adhese to the fluid, which again leads to few
collisions. It is only in the intermediate range, when inertia is still
small-moderate but starts to show up, that we observe more collisions.

Our results, being based on a non-inertial model (but obtained from an
inertial one by an approximation procedure, hence preserving most physical
constants), meets necessarily only the case of small Stokes numbers; more
precisely, the Stokes number formally disappears from the model and appears a
link between two space-scales, the radius of the particle and the Kolmogorov
scale. The final results will be expressed in these terms. One of the results,
which shows the correctness of some approximations, is that we recover
Saffmann-Turner formula \cite{st} in the case when the particle radius falls
in the dissipation range of the fluid.

We complete the introduction by means of several subsections devoted mostly to
physical interpretations of the model and of the results. First, we describe
the covariance function $C\left(  x\right)  $ and the noise $W$ above, Section
\ref{subsect covariance}. Then we describe the inertial model often considered
in the literature, driven by a certain model of turbulent fluid, Section
\ref{subsect inertial model}. In that section, we deviate from our main
purpose and show briefly how Abrahamson theory arises, summary that may help
to clarify that we have no chance to get such a result when we neglect
inertia, see Subsection \ref{sub sub section large Stokes}. Then we describe
how we derive, heuristically, a non-inertial model from the inertial one,
Section \ref{subsect reduction to non inertial}. Finally, we interpret the
main theoretical results of this paper (those summarized above), in the case
of the non-inertial model with parameters $\xi,\sigma,\sigma_{0}$ coming from
the reduction from inertial to non-inertial, which contains some Physics; see
Section \ref{Subsect physical interpretation}.

Let us also stress that the limit (\ref{PDE1}) is deterministic in spite of
the presence of an environmental noise. Systems with environmental noise
attracted a lot of attention in recent years, in particle systems scaling
limit, mean field games and other theories. Usually the macroscopic dynamics
is stochastic, the environmental noise is not averaged out by the scaling
limit, opposite to the case of independent noise on each particle; see for
instance \cite{fh-alea} for an example in aggregation phenomena. But here we
assume that the environmental noise has a scaling structure in the number of
particles such that, in the limit, it behaves like independent noise on each
particle -- still preserving the covariance structure in the cell equation,
since it is an equation for the two-point motion! An example of similar result
for point vortices was obtained in \cite{FlaLuo}, inspired by a comment in
\cite{FouHauMis}.

\subsection{Space covariance and noise\label{subsect covariance}}

Let us start with the main building block (see for instance \cite[Section 1.1]{LXZ} for more
details). We assume to have a covariance matrix function $C:\mathbb{R}%
^{3}\rightarrow\mathbb{R}^{3\times3}$ having Fourier transform
\[
\widehat{C}\left(  z\right)  =g\left(  z\right)  \left(  I_{3}-\frac{z\otimes
z}{\left\vert z\right\vert ^{2}}\right)
\]
for a non-negative real valued function $g$ bounded, integrable, smooth,
depending only on $\left\vert z\right\vert $; here $I_{3}$ is the identity
matrix in $\mathbb{R}^{3}$. For technical reasons hopefully eliminable in
future more advanced works, we assume $C$ compact support, a property that can
be translated into a property of analyticity and growth of $g$ by a
Paley-Wiener-Schwartz theorem. To this covariance matrix function $C$, it is
associated a Gaussian vector valued process $W\left(  x,t\right)
\in\mathbb{R}^{3}$, with $\operatorname{div}W\left(  x,t\right)  =0$, that is
Brownian in time and has space-covariance $C$: denoting by $\mathbb{E}$\ the
expected value with respect to the randomness,
\[
\mathbb{E}\left[  W^{i}\left(  x,t\right)  W^{j}\left(  y,s\right)  \right]
=C^{i,j}\left(  x-y\right)  t\wedge s.
\]
Its time derivative $\partial_{t}W\left(  x,t\right)  $ is a solenoidal vector
valued white noise (formally speaking, the covariance of $\partial_{t}W\left(
x,t\right)  $ is $C\left(  x-y\right)  \delta\left(  t-s\right)  $).

One way to construct $W\left(  x,t\right)  $ from $C$, relevant for the
mathematical analysis below and possibly for numerical simulations, is the
following one. To $C$ we associate the covariance operator $\mathcal{C}$ in
the space $L_{s}^{2}\left(  \mathbb{R}^{3},\mathbb{R}^{3}\right)  $ of
solenoidal vector fields on $\mathbb{R}^{3}$; this operator is positive
selfadjoint and compact, hence it has a spectral decomposition $\mathcal{C}%
v=\sum_{k=0}^{\infty}\lambda_{k}\left\langle v,e_{k}\right\rangle _{L^{2}%
}e_{k}$, $v\in L_{s}^{2}\left(  \mathbb{R}^{3},\mathbb{R}^{3}\right)  $, where
$\left(  e_{k}\right)  _{k\in\mathbb{N}}$ is a complete orthonormal system of
$L_{s}^{2}\left(  \mathbb{R}^{3},\mathbb{R}^{3}\right)  $ and $\lambda_{k}%
\geq0$ for every $k\in\mathbb{N}$. Taken a sequence $\left(  W_{k}\left(
t\right)  \right)  _{k\in\mathbb{N}}$ of independent scalar Brownian motions,
we define%
\begin{align}\label{series-repn}
W\left(  x,t\right)  =\sum_{k=0}^{\infty}\lambda_{k}e_{k}\left(  x\right)
W_{k}\left(  t\right)  .
\end{align}

From the assumption on the Fourier transform of $C$, one can prove that
$x\mapsto W\left(  x,t\right)  $ is an isotropic random field. In particular
$C\left(  x\right)  $ is a function of $\left\vert x\right\vert $ only; and
one has $C\left(  0\right)  =\frac{1}{3}\left\Vert g\right\Vert _{L^{1}}I_{3}$
where $\left\Vert g\right\Vert _{L^{1}}=\int_{\mathbb{R}^{3}}g\left(
x\right)  dx$. We assume $\frac{1}{3}\left\Vert g\right\Vert _{L^{1}}=1$,
hence%
\[
C\left(  0\right)  =I_{3}.
\]

\subsection{Inertial model\label{subsect inertial model}}

Behind the non-inertial model of this paper there is an inertial one that we
describe now. We use new notations, to avoid confusion.

Consider $N$ particles in $\mathbb{R}^{3}$, with positions and velocities
$\left(  x_{1}^{N}\left(  t\right)  ,v_{1}^{N}\left(  t\right)  \right)  $,
..., $\left(  x_{N}^{N}\left(  t\right)  ,v_{N}^{N}\left(  t\right)  \right)
$, subject to a turbulent fluid by Stokes law:
\begin{align*}
\frac{dx_{i}^{N}\left(  t\right)  }{dt}  &  =v_{i}^{N}\left(  t\right) \\
\frac{dv_{i}^{N}\left(  t\right)  }{dt}  &  =-\frac{1}{\tau_{P}}\left(
v_{i}^{N}\left(  t\right)  -u\left(  x_{i}^{N}\left(  t\right)
,t\right)  \right)  +\frac{u_{P}}{\sqrt{\tau_{P}}}\frac{dB_{i}\left(
t\right)  }{dt}%
\end{align*}
where $u\left(  x,t\right)  $ is a space-time stationary Gaussian vector
random field, divergence free, with%
\[
\mathbb{E}\left[  u_{i}\left(  x,t\right)  u_{j}\left(  y,s\right)  \right]
=e^{-\frac{\left\vert t-s\right\vert }{\tau_{\eta}}}P_{ij}\left(  x-y\right)
\]
for $i,j=1,2,3$. With this first identity we have specified the relaxation
time\ $\tau_{\eta}$ of the turbulent fluid. Then we want to specify, by a
proper choice of $P\left(  x\right)  $, that the fluid is very correlated when
$\left\vert x-y\right\vert \leq\eta$, interpreted as Kolmogorov scale (below
which dissipation dominates and the velocity field $u\left(  x,t\right)  $ is
smooth and mildly varying), then less and less correlated as $\left\vert
x-y\right\vert $ increases above $\eta$. One way, accepting moreover isotropy
and other idealizations, is to assume
\[
P\left(  x\right)  =u_{\eta}^{2}C\left(  \frac{L}{\eta}x\right)
\]
where $u_{\eta}^{2}$ is a measure of the turbulent kinetic energy, $C\left(
x\right)  $, $\left(  e_{k},\lambda_{k}\right)  _{k\in\mathbb{N}}$ and
$W\left(  x,t\right)  $ are as in the previous section and $L$ is a
macroscopic length scale. For simplicity (although it is not mandatory) we
choose $u_{\eta}=\frac{\eta}{\tau_\eta}$. If Kolmogorov scale is adopted, we
may use the formulae $\eta=\left(  \frac{\nu^{3}}{\varepsilon}\right)  ^{1/4}%
$, $\tau_{\eta}=\left(  \frac{\nu}{\varepsilon}\right)  ^{1/2}$, $u_{\eta
}=\left(  \nu\varepsilon\right)  ^{1/4}$, where $\nu$ is the kinematic
viscosity of the fluid and $\varepsilon$ the average dissipation energy rate.

Let us also introduce the matrix%
\[
D\left(  x\right)  =u_{\eta}^{2}\left(  I_3-C\left(  \frac{L}{\eta}x\right)
\right)  .
\]
One has
\begin{align*}
D_{ij}\left(  x-y\right)   &  =u_{\eta}^{2}\delta_{ij}-P_{ij}\left(
x-y\right) \\
&  =\frac{1}{2}\mathbb{E}\left[  \left(  u_{i}\left(  x,t\right)
-u_{i}\left(  y,t\right)  \right)  \left(  u_{j}\left(  x,t\right)
-u_{j}\left(  y,t\right)  \right)  \right]
\end{align*}
so it is related to the second order structure function, in particular to the
quantity $\left\langle \left\vert \Delta_{\left\vert x-y\right\vert
}u\right\vert ^{2}\right\rangle $ anticipated in the Abstract, defined as the
square-average of the increment of random velocity field $u$ between points at
distance $\left\vert x-y\right\vert $:%
\[
\left\langle \left\vert \Delta_{\left\vert x-y\right\vert }u\right\vert
^{2}\right\rangle =2{\rm Tr}D\left(  x-y\right)  .
\]

One way to ``realize'' such field $u\left(  x,t\right)  $ is by taking $u\left(
x,t\right)  =u_{\eta}Z_{\eta}\left(  x,t\right)  $ where $Z_{\eta}\left(
x,t\right)  $ is a stationary solution of
\[
\partial_{t}Z_{\eta}\left(  x,t\right)  =-\frac{1}{\tau_{\eta}}Z_{\eta}\left(
x,t\right)  +\frac{1}{\sqrt{\tau_{\eta}}}\partial_{t}W_{\eta/L}\left(
x,t\right)  .
\]
One can easily show that
\begin{equation}
\mathbb{E}\left[  Z_{\eta}^{i}\left(  x,t\right)  Z_{\eta}^{j}\left(
y,s\right)  \right]  =e^{-\frac{\left\vert t-s\right\vert }{\tau_{\eta}}%
}C^{i,j}\left(  \frac{L}{\eta}\left(  x-y\right)  \right)  . \label{correl Z}%
\end{equation}
The process $Z_{\eta}\left(  x,t\right)  $ has the same space-covariance as
$W_{\eta/L}\left(  x,t\right)  $, but it is stationary with time-correlation
$\tau_{\eta}$. We keep the notation $\eta/L$ for $W_{\eta/L}$ for comparison
with $W_{\epsilon/\xi}$ of Section \ref{subsect math model and result}, but
drop $L$ from all other notations like $u_{\eta},\tau_{\eta},Z_{\eta}$ for
simplicity of notations.

This completes the description of the fluid velocity field. Concerning the
particles, we have assumed in the model above that they have an inertial
motion, with relaxation time $\tau_{P}$ and Stokes law $-\frac{1}{\tau_{P}%
}\left(  v_{i}^{N}\left(  t\right)  -u_{\eta}Z_{\eta}\left(  x_{i}^{N}\left(
t\right)  ,t\right)  \right)  $ linking them to the fluid motion, plus a very
small ``internal'' noise $\frac{u_{P}}{\sqrt{\tau_{P}}}\frac{dB_{i}\left(
t\right)  }{dt}$, phenomenologically taking into account particle geometry and
internal degrees of freedom interacting with the environment in a complex way.
Neglecting the fluid, the standard deviation of $v_{i}^{N}\left(  t\right)  $
would be proportional to $u_{P}$, providing a phenomenological interpretation
of the parameter $u_{P}$; that we shall assume much smaller than all other
contributions, just needed to mathematical reasons of ``diffusion regularization''.

Particles have an interaction radius $a$: when the centers have distance less
or equal to $2a$, they undergo a collision which may produce coalescence of
the particles, with a certain rate of collision (since we do not develop the
full inertial theory, we skip the precise description of the rate). We assume
to work in a fluid-particle regime where $\eta$ and $a$ are roughly
comparable, maybe differing multiplicatively by a large or small length-scale
ratio%
\[
\xi_{0}=\frac{a}{\eta}%
\]
but not being different infinitesimal orders when we take the limit
$a\rightarrow0$. Therefore we assume $\xi$ finite in the limit $a\rightarrow0$
and we can write the covariance $C\left(  \frac{L}{\eta}x\right)  $ of the
noise in the form%
\[
C\left(  \frac{\xi_{0}L}{a}x\right)  .
\]

This model, heuristically speaking, captures a number of properties of
aggregation in turbulent fluids, the we describe in the following three cases
$1_{iner},2_{iner},3_{iner}$ (certainly non exhausting the important
informations on the problem, but useful for the conceptualizations made later on).

$1_{iner}$: Case $\tau_{P}\ll \tau_{\eta}$. In this case, particles are almost
adhese to the fluid and thus do not collide so often. This case is classically
covered by Saffman-Turner law $\overline{\mathcal{R}}\sim a^{3}$ (up to
constants involving the factor $\left(  \frac{\varepsilon}{\nu}\right)
^{1/2}$) \cite{st}.

$2_{iner}$: Case $\tau_{P}\sim\tau_{\eta}$ (up to finite constants). In this
case, particles are less adhese, with some (but not large) inertia. More
collisions are observed.

$3_{iner}$: Case $\tau_{P}\gg\tau_{\eta}$. In this case inertia dominates more
and more as $St$ increases; the behavior of particles is similar to
independent Brownian motions, but with inertia. More inertia produces less
collisions, hence the mean collision rate decreases with $St$. See Subsection
\ref{sub sub section large Stokes} below for additional informations (like
Abrahamson law $\frac{u_{\eta}}{\sqrt{St}}$).

In this paper the condition $\tau_{P}\ll1$ is essential for the non-inertial
approximation. Although a priori this could still allow us to consider
$\tau_{P}\gg\tau_{\eta}$, mathematically the information about $\tau_{P}$ is
lost and our results do not cover such a case. Our paper is restricted to
cases $1_{iner}$ and $2_{iner}$.

However, even with the restriction to $1_{iner}$ and $2_{iner}$, we do not
fully recover the rapid increase of mean collision rate happening at some
Stokes level, when we pass from $1_{iner}$ to $2_{iner}$. Our theory gives
only a relatively smooth transition from $1_{iner}$ to $2_{iner}$. We presume
it is due to the homogeneous Gaussian approximation of the velocity field,
which lacks very important concentration effects of the density of particles.

\subsubsection{The case $\tau_{P}\gg\tau_{\eta}$
\label{sub sub section large Stokes}}

Let us expand this case for comparison, case unfortunately not covered by the
theory of this paper.

In this case, from the observational viewpoint of a particle, $Z_{\eta}\left(
x,t\right)  $ is like a white noise, say of the form $\sqrt{\tau_{\eta}}%
d\beta_{i}\left(  t\right)  $ (see (\ref{Z approx}) below), hence the velocity
$v_{i}^{N}\left(  t\right)  $ behaves like the solution of a damped Langevin
equation with double noise:%
\[
\frac{dv_{i}^{N}\left(  t\right)  }{dt}=-\frac{1}{\tau_{P}}v_{i}^{N}\left(
t\right)  +\frac{u_{\eta}\sqrt{\tau_{\eta}}}{\tau_{P}}\frac{d\beta_{i}\left(
t\right)  }{dt}+\frac{u_{P}}{\sqrt{\tau_{P}}}\frac{dB_{i}\left(  t\right)
}{dt}.
\]
For notational and conceptual simplicity, let us neglect the very small
contribution $\frac{u_{P}}{\sqrt{\tau_{P}}}\frac{dB_{i}\left(  t\right)  }%
{dt}$ and rewrite the main coefficient as%
\begin{equation}
\frac{dv_{i}^{N}\left(  t\right)  }{dt}=-\frac{1}{\tau_{P}}v_{i}^{N}\left(
t\right)  +\frac{1}{\sqrt{\tau_{P}}}\frac{u_{\eta}}{\sqrt{St}}d\beta
_{i}\left(  t\right)  \label{OU}%
\end{equation}
where the Stokes number is defined as%
\[
St=\frac{\tau_{P}}{\tau_{\eta}}.
\]
For large $St$, the inertia dominates the picture and collisions are not so
frequent. To see this from equation (\ref{OU}), first let us assume that, for
two different particles $i\neq i$, $\beta_{i}\left(  t\right)  $ and
$\beta_{j}\left(  t\right)  $ are independent Brownian motions; this is a
reasonable approximation, using the rough argument that a strong order between
the time scales $\tau_{P}\gg \tau_{\eta}$ corresponds also to a strong order
between the length scales $a\gg \eta$. Hence, for the difference $v_{ij}%
^{N}\left(  t\right)  =v_{i}^{N}\left(  t\right)  -v_{j}^{N}\left(  t\right)
$ we have exactly the same equation (\ref{OU}), just with a factor $\sqrt{2}$
in front of the noise. The standard deviation of a stationary solution is
proportional to $\frac{u_{\eta}}{\sqrt{St}}$, hence (up to a multiplicative
factor) we infer that the average collision rate is $\frac{u_{\eta}}{\sqrt
{St}}$. This is the famous Abrahamson law \cite{Abrahan}. Unfortunately, this
law is lost in the non-inertial approximation below (in spite of several
attempts; it seems an intrinsic weakness of the non-inertial model, for the
obvious reason that it corresponds to an approximation around $\tau_{P}=0$).
Similarly, we lose the change of monotonicity which is known from theories and
experiments, namely the decrease of the average collision rate in $St$ for
large $St$, after its increase for smaller $St$.

\subsection{Reduction to a non-inertial
model\label{subsect reduction to non inertial}}

The model above, suitable for numerical simulations, is too difficult at
present for a rigorous mathematical investigation of the limit as
$N\rightarrow\infty$. We apply two simplifications, based on the fact that
both $\tau_{P}$ and $\tau_{\eta}$ are very small in practical examples. We
have a first approximation%
\begin{equation}
Z_{\eta}\left(  x,t\right)  dt\sim\sqrt{\tau_{\eta}}\partial_{t}W_{\eta
/L}\left(  x,dt\right)  \label{Z approx}%
\end{equation}
coming from the equation
\[
-\tau_{\eta}\partial_{t}Z_{\eta}\left(  x,t\right)  =Z_{\eta}\left(
x,t\right)  -\sqrt{\tau_{\eta}}\partial_{t}W_{\eta/L}\left(  x,t\right)
\]
as $\tau_{\eta}\rightarrow0$; and a second approximation
\[
v_{i}^{N}\left(  t\right)  \sim u_{\eta}Z_{\eta}\left(  x_{i}^{N}\left(
t\right)  ,t\right)  +u_{P}\sqrt{\tau_{P}}\frac{dB_{i}\left(  t\right)  }{dt}%
\]
coming from the equation
\[
-\tau_{P}\frac{dv_{i}^{N}\left(  t\right)  }{dt}=v_{i}^{N}\left(  t\right)
-u_{\eta}Z_{\eta}\left(  x_{i}^{N}\left(  t\right)  ,t\right)  -u_{P}%
\sqrt{\tau_{P}}\frac{dB_{i}\left(  t\right)  }{dt}%
\]
as $\tau_{P}\rightarrow0$. Inserting further the first approximation in the
second one, we get%
\[
v_{i}^{N}\left(  t\right)  \sim u_{\eta}\sqrt{\tau_{\eta}}\partial_{t}%
W_{\eta/L}\left(  x_{i}^{N}\left(  t\right)  ,t\right)  +u_{P}\sqrt{\tau_{P}%
}\frac{dB_{i}\left(  t\right)  }{dt}.
\]
Substituting this into the first equation we get%
\begin{equation}
\frac{dx_{i}^{N}\left(  t\right)  }{dt}=u_{\eta}\sqrt{\tau_{\eta}}\partial
_{t}W_{\eta/L}\left(  x_{i}^{N}\left(  t\right)  ,t\right)  +u_{P}\sqrt
{\tau_{P}}\frac{dB_{i}\left(  t\right)  }{dt} \label{simplified SDE}%
\end{equation}
which is a closed equation. Understanding the degree of these approximations
and their relative importance is very difficult and much beyond the scope of
this paper.

Concerning coalescence, first we oversimplify certain aspects of the problem
(to focus on difficult others) and therefore assume that particles, after
coalescence, ``exit the system''. Namely we focus only on the number of
particles of radius $a$ and neglect not only what happens to the result of
coalescence but also the important fact that, in reality, larger particles
produced by coalescence usually may coalesce also with the smaller particles
of radius $a$, contributing to the decrease of their number. But all these
elements may be accounted for by the formalism of Smoluckowski equations,
which we omit here. By the way, with the present simplified formalism the
model may be applied to fragmentation too.

Concerning the collision rate between two particles $x_{i}^{N}$ and $x_{j}%
^{N}$, $i\neq j$, in the case of the non-inertial model a natural choice is
the form%
\begin{equation}
\frac{R_{0}}{N}a^{-3}\theta_{0}\left(  a^{-1}\left(  x_{i}^{N}-x_{j}%
^{N}\right)  \right)  \label{individual rate inertial}%
\end{equation}
where $R_{0}$ has dimension $\left[  L\right]  ^{3}\left[  T\right]  ^{-1}$ to
give dimension $\left[  T\right]  ^{-1}$ to the rate. Here $\theta_{0}$ is a
smooth non negative function, with $\int\theta_{0}\left(  x\right)  dx=1$,
roughly equal to $\frac{3}{4\pi}$ in the ball $B\left(  0,1\right)  $, and
equal to zero outside.

Before we close this section, let us mention heuristically what the previous
non-inertial model may hope to tell us.

We mention this fact, along with the next one, to say that the general formula
(\ref{final}) or the previous ones of this section, more precise and rigorous,
are only a first step based on an alternative methodology, still far from
providing a good understanding of the small-to-moderate range of particle
radius. Particles, just described by their position $x_{i}^{N}\left(
t\right)  $, are driven only by two Brownian sources, $W_{\eta/L}\left(
x_{i}^{N}\left(  t\right)  ,t\right)  $ and $B_{i}\left(  t\right)  $. Thus
$x_{i}^{N}\left(  t\right)  $ behaves roughly like a Brownian motion (not
precisely, since $W_{\eta/L}\left(  x_{i}^{N}\left(  t\right)  ,t\right)  $
depends on the particle position). Particles will collide, because of the
random motion, with intensity increasing with the coefficient $u_{\eta}%
\sqrt{\tau_{\eta}}$ (also with $u_{P}\sqrt{\tau_{P}}$, but we assume $u_{P}$
much smaller than all other constants).

$1_{non-iner}$: case $a\ll \eta$. The process $W_{\eta/L}\left(  x,dt\right)  $
is very correlated below the Kolmogorov scale $\eta$, hence particles which
have radius $a$ much smaller than the Kolmogorov scale and are very close each
other, feel the same random impulses, thus the random strength of $W_{\eta
/L}\left(  x,t\right)  $ on them is very reduced, it does not put them closer;
they travel parallely, so to speak. Therefore aggregation, which increases
with $u_{\eta}\sqrt{\tau_{\eta}}$, is depleted if particles have a radius $a$
much smaller than $\eta$.

$2_{non-iner}$: case $a\uparrow\eta$. Both obstructions discussed in
$1_{non-iner}$ are progressively relaxed when $a$ increases to $\eta$.
Therefore the mean collision rate should monotonically increase, when
$a\uparrow\eta$.

Let us now make precise the scaling between $N$ and $a$ chosen here, which is
a critical issue. We separate it into a subsection for the reader's convenience.

\subsubsection{The mean free path regime\label{subsect mean free path}}

We assume the initial positions of the particles located in a unitary volume,
approximately, so the same is true in a finite time interval approximately.
Therefore, if we increase $N$, particles are more packed together, not just
distributed in a larger volume; and their radius $a$ is assumed to go to zero,
if we increase $N$ towards infinity, to get a sharp mathematical result. The
link between $N$ and $a$ will be a finite number (finite in the limit when
$N\rightarrow\infty$ and $a\rightarrow0$).%
\begin{equation}
Na=\rho_{0}.\label{scaling assumption}%
\end{equation}
Here $\rho_{0}$ is a given positive number with dimension $\left[  L\right]
$: It is a ``volume-occupation-constant'': the volume occupied by the balls of
radius $a$ centered at the particle positions is
\[
\text{volume of particles }\sim\text{ }\frac{4\pi}{3}Na^{3}=\frac{4\pi}{3}%
\rho_{0}a^{2}%
\]
hence the volume of particles linearly increases with $\rho_{0}$.

Since $a$ is small, this volume is very small, hence we are talking about a
very sparse regime. The intuition then is of a very sparse collection of
droplets. However, they perform, roughly speaking, Brownian motions (mostly
due to the turbulent contribution $u_{\eta}\sqrt{\tau_{\eta}}\partial
_{t}W_{\eta/L}\left(  x_{i}^{N}\left(  t\right)  ,t\right)  $), hence each one
of their centers explores in unitary time a subset of $\mathbb{R}^{3}$ which
has Hausdorff dimension 2 (with 2-dimensional Hausdorff measure zero). Hence,
a ball of radius $a$ centered at such a point, explores a set $A\subset
\mathbb{R}^{3}$ (called the Brownian sausage) having volume roughly of size
$a$, up to multiplication by the parameters of the Brownian motion. During the
excursion performed in a unit of time, the particle meets all particles that
(with their radius) intersect $A$. If the particles are roughly uniformly
distributed, then the number of particles which intersect $A$ is of order
$N\cdot {\rm Vol}(A)$, namely $Na$, which is a finite number $\rho_{0}$. This
implies that each particle has a finite number of encounters in the unit of
time. This is the so-called mean free path regime.

This is the only regime where we can find non trivial limit results. Indeed,
when $Na\rightarrow\infty$ we have a mean field result, since each particle
would meet infinitely many others in a unit of time; one can show that the
result is simply%
\[
\overline{\mathcal{R}}=R_{0}%
\]
which is a trivial and physically wrong result. On the contrary, for
$Na\rightarrow0$, we would not observe sufficient interactions in the scaling
limit; the average collision rate $\overline{\mathcal{R}}$ would vanish.

Therefore we investigate a \textit{non-mean-field regime}, and the
mathematical correction in the final formulae coming from this choice is essential.

\subsection{Application of our results and their physical interpretation
\label{Subsect physical interpretation}}

\subsubsection{Conversion of the notations}

Let us first summarize the notation of the concrete example of Section
\ref{subsect reduction to non inertial} in comparison with the abstract
notations of Section \ref{subsect math model and result}. The concrete
dynamics of particles is given by equation (\ref{simplified SDE}), the rate of
collision by (\ref{individual rate inertial}) and the scaling regime by
(\ref{scaling assumption}). In the abstract formulation of Section
\ref{subsect math model and result} the dynamics is given by
(\ref{abstract SDE}), the rate of collision by (\ref{abstract rate}) and the
scaling by (\ref{abstract scaling}). Therefore the conversion table is (on the
left the abstract notations, on the right those of the concrete example):%

\begin{align*}
\epsilon &  =\frac{a}{\rho_{0}}\\
\theta\left(  x\right)   &  =\rho_{0}^{-3}\theta_{0}\left(  \rho_{0}%
^{-1}x\right)  \\
\sigma &  =u_{\eta}\sqrt{\tau_{\eta}}\\
\sigma_{0} &  =u_{P}\sqrt{\tau_{P}}\\
\xi &  =\frac{L}{\rho_{0}}\frac{a}{\eta}%
\end{align*}
and recall%
\begin{align*}
P\left(  x\right)   &  =u_{\eta}^{2}C\left(  \frac{L}{\eta}x\right)  =u_{\eta
}^{2}C\left(  \frac{\xi_{0}L}{a}x\right)  \\
D\left(  x\right)   &  =u_{\eta}^{2}\left(  I_3-C\left(  \frac{L}{\eta
}x\right)  \right)  \\
R &  =\frac{\overline{\mathcal{R}}}{N}n^{2}%
\end{align*}
(the first identity comes from comparison of the scaling limit
assumption;\ then the second identity from comparison of the collision rates;
the last identity is chosen in order to have $W_{\epsilon/\xi}=W_{\eta/L}$,
recall the definition $W_{\epsilon/\xi}\left(  x,t\right)  =W\left(  \frac
{\xi}{\epsilon}x,t\right)  $ from Section \ref{subsect math model and result}
and $W_{\eta/L}\left(  x,t\right)  =W\left(  \frac{L}{\eta}x,t\right)  $ from
Section \ref{subsect reduction to non inertial}).

\subsubsection{The result in the notations of Section
\ref{subsect reduction to non inertial}}

The normalized empirical measure of the particle system converges to a density
$f$ satisfying
\[
\partial_{t}f=\frac{1}{2}\left(  u_{\eta}^{2}\tau_{\eta}+u_{P}^{2}\tau
_{P}\right)  \Delta f-\overline{\mathcal{R}}f^{2}%
\]
where
\[
\overline{\mathcal{R}}=R_{0}\int\rho_{0}^{-3}\theta_{0}\left(  \rho_{0}%
^{-1}x\right)  \left(  1+u\left(  x\right)  \right)  dx
\]
where $u$ is the unique vanishing at infinity solution of class $C^{2}$ of the
equation%
\[
u_{P}^{2}\tau_{P}\Delta u\left(  x\right)  +u_{\eta}^{2}\tau_{\eta
}\operatorname{div}\left(  \left(  I_3-C\left(  \frac{L}{\rho_{0}}\frac
{a}{\eta}x\right)  \right)  \nabla u\left(  x\right)  \right)  =R_{0}\rho
_{0}^{-3}\theta_{0}\left(  \rho_{0}^{-1}x\right)  \left(  1+u\right)
\]
namely%
\[
u_{P}^{2}\tau_{P}\Delta u\left(  x\right)  +\tau_{\eta}\operatorname{div}%
\left(  D\left(  \frac{a}{\rho_{0}}x\right)  \nabla u\left(  x\right)
\right)  =R_{0}\rho_{0}^{-3}\theta_{0}\left(  \rho_{0}^{-1}x\right)  \left(
1+u\right)  .
\]
In the limit as $R_{0}\rightarrow\infty$, one has%
\[
\overline{\mathcal{R}}\rightarrow\overline{\mathcal{R}}_{\infty}= {\rm Cap}\left(
K_{\theta_{0}\left(  \rho_{0}^{-1}\cdot\right)  },A\right)  = {\rm Cap}\left(
B\left(  0,\rho_{0}\right)  ,A\right)
\]
where $K_{\theta_{0}\left(  \rho_{0}^{-1}\cdot\right)  }=B\left(  0,\rho
_{0}\right)  $ is the support of $\theta_{0}\left(  \rho_{0}^{-1}\cdot\right)
$, $A$ is the matrix function $A\left(  x\right)  =u_{P}^{2}\tau_{P}I_3%
+\tau_{\eta}D\left(  \frac{a}{\rho_{0}}x\right)  $ and the capacity
${\rm Cap}\left(  B\left(  0,\rho_{0}\right)  ,A\right)  $ is defined as the infimum
of $\int_{\R^3}\nabla\phi^{T}\left(  x\right)  A\left(  x\right)
\nabla\phi\left(  x\right)  dx$ over all smooth functions $\phi\left(
x\right)  $, vanishing at infinity, that are larger than 1 on a neighbor of
$B\left(  0,\rho_{0}\right)  $.

A very heuristic computation in spherical coordinates, for the simplified
example of minimizing $\int_{B\left(  0,\rho_{0}\right)  ^{c}}\nabla\phi
^{T}\left(  x\right)  I_3\nabla\phi\left(  x\right)  dx$ over all smooth
functions $\phi\left(  x\right)  $, vanishing at infinity, equal to 1 on
$\partial B\left(  0,\rho_{0}\right)  $, shows that the minimizer is not so
different from the radial function linearly decaying to zero on $r\in\left[
\rho_{0},2\rho_{0}\right]  $, with derivative modulus $\frac{1}{\rho_{0}}$.
The minimal value is roughly equal to $\rho_{0}$, up to a constant (this is
the well known result on the electric capacity of a sphere). If, in place of
$A\left(  x\right)  =I_3$ we take a slowly varying function with radial
symmetry, a rough approximation of the infimum is $\frac{1}{3}{\rm Tr}A\left(
\rho_{0}\right)  \rho_{0}$, since $A$ matters only when $r\in\left[  \rho
_{0},2\rho_{0}\right]  $. Therefore, the infimum in our case of interest can
be approximated by
\begin{align*}
\frac{1}{3}{\rm Tr}A\left(  \rho_{0}\right)  \rho_{0} &  =u_{P}^{2}\tau_{P}\rho
_{0}+\tau_{\eta}\frac{1}{3}{\rm Tr}D\left(  a\right)  \rho_{0}\\
&  =u_{P}^{2}\tau_{P}\rho_{0}+\tau_{\eta}\frac{1}{6}\left\langle \left\vert
\Delta_{a}u\right\vert ^{2}\right\rangle \rho_{0}%
\end{align*}
that (to simplify the formulae) we replace by%
\[
\tau_{\eta}\left\langle \left\vert \Delta_{a}u\right\vert ^{2}\right\rangle
\rho_{0}%
\]
since $u_{P}^{2}$ is very small and we have also missed other multiplicative
constants like $\frac{1}{6}$.

Therefore, taking into account $R=\frac{\overline{\mathcal{R}}}{N}n^{2}$ and
$Na=\rho_{0}$, we get%
\begin{equation}
R\sim\tau_{\eta}\left\langle \left\vert \Delta_{a}u\right\vert ^{2}%
\right\rangle a\cdot n^{2} \label{final}%
\end{equation}
as stated in the Abstract. Then, in particular, if we accept Kolmogorov time
scale $\tau_{\eta}\sim\left(  \frac{\nu}{\varepsilon}\right)  ^{1/2}$ and we
assume that $a$ is in the dissipative range where
\[
\left\langle \left\vert \Delta_{a}u\right\vert ^{2}\right\rangle
\sim\left\langle \left\vert \nabla u\right\vert ^{2}\right\rangle
a^{2}=\left(  \frac{\varepsilon}{\nu}\right)  a^{2}%
\]
(because of the balance relation $\nu\left\langle \left\vert \nabla
u\right\vert ^{2}\right\rangle =\varepsilon$), we get Saffman-Turner formula%
\begin{align*}
R  &  \sim\left(  \frac{\nu}{\varepsilon}\right)  ^{1/2}\left(  \frac
{\varepsilon}{\nu}\right)  a^{2}a\cdot n^{2}\\
&  =\left(  \frac{\varepsilon}{\nu}\right)  ^{1/2}a^{3}\cdot n^{2}.
\end{align*}

The Saffman-Turner regime was already well accepted in the Physical
literature, hence we just confirm the result in a more complex way than the
original paper \cite{st}. Part of our procedure is rigorous, but many steps
were hand waving approximations (very reasonable), hence we cannot claim that
we have rigorously proved Saffman-Turner result from first principle. We have
only provided an alternative derivation.

The final formula (\ref{final}), besides being correct in the dissipation
range, provides a new view on the behavior for slightly larger $a$. We should
however remember that we took a limit as $\tau_{P}\rightarrow0$, hence larger
$a$ compared to $\eta$ are meaningful only in few physical examples, precisely
when the density of a particle in much less than the density of the fluid,
which is not true for rain droplets in air or small dust clusters in star
dust; maybe it may find more applications in fragmentation processes. If
$\tau_{P}\rightarrow0$ by $a$ is in the inertial range, where
\[
\left\langle \left\vert \Delta_{a}u\right\vert ^{2}\right\rangle
\sim\varepsilon^{2/3}a^{2/3}%
\]
according to K41 theory (otherwise one should include\ intermittent
corrections) one get (with the same arguments above)%
\[
R\sim\nu^{1/2}\varepsilon^{1/6}a^{5/3}\cdot n^{2}%
\]
which is not a common result and not of easy interpretation. More generally,
even if we do not use the Kolmogorov scaling law of structure function, it is
clear that $\left\langle \left\vert \Delta_{a}u\right\vert ^{2}\right\rangle
$, which starts as $a^{2}$ for very small $a$, tends to saturate for larger
$a$, providing an ``S-shape''. This is very reasonable. It is a monotone
behavior with saturation. There is no chance however to see changes of
monotonicity, with the given approximations.

\begin{remark}
Even if limited to small Stokes numbers, in principle it should be possible to
incorporate in a precise mathematical model some ingredient which produces
particle concentration, a phenomenon that is observed and is among those
responsible for a strong increase of collision rate, when certain parameters
increase with respect to Saffman-Turner regime. This remains an open problem
in our framework, perhaps the most interesting one. See \cite{Bec2005} for
results in this direction.
\end{remark}

\section{Statement of main result}

In this section, we start by recapping our model with additional details (that were omitted in the Introduction due to their technical nature), and then we state our main Theorem \ref{thm:main}.

About the covariance function $C(x)$, recall that by construction  this function is radially symmetric, $C(0)=I_3$ and $\omega(x) = I_3-C(x)$ is nonnegative definite. The latter property is due to $\omega(x) = \frac{1}{2}\sum_{k\in \N}\big(e_k(0)-e_k(x)\big)^{\otimes2}$. We also have
\begin{align}\label{covariance}
C(x-y)=\sum_{k\in  \N}\lambda_ke_{k}\left(  x\right)  \otimes \lambda_ke_{k}\left(  y\right),
\end{align}
where we recall that our Gaussian random field $W(x, dt)$ can be written as a series \eqref{series-repn}. Henceforth, we will write 
\begin{align*}
C_\ep(x)= C\big(\frac{x}{\ep}\big), \quad e_k^\ep(x) = e_k\big(\frac{x}{\ep}\big).
\end{align*}
By the series representation of $W_{\ep/\xi}(x, dt)$, we can rewrite the dynamics of the particle system \eqref{abstract SDE} as 
\begin{align}\label{sde}
dx_i^N(t)&=\sigma\sum_{k\in  \N} \lambda_ke_{k}^{\epsilon}\left(\xi x_i^N(t)\right) \circ \,  dW_k(t)+\sigma_0\, dB_i(t),\quad i\in\cN(t),
\end{align}
where $\circ$ denotes Stratonovich integration. We make this choice in accordance with Wong-Zakai principle. Due to $e_k^\ep$ being divergence free, it turns out that in our setting it coincides with It\^o integration, since the Stratonovich-to-It\^o corrector is zero.  $\cN(t)\subset\{1,2,...,N\}=\cN(0)$ is the (random) index set of active particles at time $t$.

Let us formally state our model: Given a probability space $(\Omega, \cF, \P)$, for every $N\in \N$, we consider a system of initially $N$ particles evolving in $\R^3$, whose positions $x_i(0)$ at time $t=0$ are independent and identically distributed (i.i.d) with probability density $f_0(x)$. Particles are enumerated from $i=1,2,...,N$ at the beginning, and their indices are fixed once and for all. As long as a particle $i\in\{1,2,...,N\}$ is alive, its spatial position $x_i^N(t)\in\R^3$ follows \eqref{sde}. Any pair of distinct particles $(i,j)$, $i\neq j$,  may coalesce and both disappear, with a stochastic rate \eqref{abstract rate} which we denote by $r_{ij}^N$. When a particle is no longer active, we assign it a ``cemetery'' state $\emptyset$ which is absorbing, and thus the cardinality of active particles in the system at $t>0$ are less or equal than $N$ and non-increasing in time. 

We impose the following assumption on the initial density $f_0(x)$ of an individual particle.
\begin{condition}\label{cond:ini}
There exist finite constants $L_0$ and $\Gamma$ such that $f_0$ is supported in (the Euclidean ball of radius $L_0$ around the origin) $B(0,L_0)\subset\R^3$, and $f_0(x)\le \Gamma$ for all $x\in\R^3$. In particular, $f_0\in L^1\cap L^\infty(\R^3; \R_+)$. 
\end{condition}
The probability space is endowed with the canonical filtration
\begin{align*}
\cG_t:=\sigma\left\{\{W_k(s)\}_{k\in  \N}, \{B_i(s)\}_{i=1}^\infty, s\le t, \{x_i(0)\}_{i=1}^\infty\right\}, \quad t\ge0.
\end{align*} 
Denoting by $\zeta$ a generic configuration of the particle system, i.e. $\zeta=(x_1,x_2,...,x_N)\in\left(\R^3\cup\{\emptyset\}\right)^N$, and by $\zeta_t^N$ the configuration of the particle system at time $t$,  $(\zeta_t^N)_{t\ge0}$ is a Markov process with infinitesimal generator 
\begin{align}
\mathcal L^NF(\zeta)&=\cL^N_DF(\zeta)+\cL^N_JF(\zeta)\label{generator}\\
\cL^N_DF(\zeta)&:=\frac{1}{2}\sigma^2\sum_{k\in  \N}\sum_{i,j\in\cN(\zeta)}\lambda_k^2e_k^\ep(\xi x_i)\cdot\nabla_{x_i} \big(e_k^\ep(\xi x_j)\nabla_{x_j}F(\zeta)\big)+\frac{1}{2}\sigma_0^2\sum_{i\in\cN(\zeta)}\Delta_{x_i}F(\zeta)\nonumber\\
&=\frac{1}{2}\sigma^2\sum_{k\in  \N}\sum_{i,j\in\cN(\zeta)}\lambda_k^2\nabla_{x_i}\cdot \big(e_k^\ep(\xi x_i)\otimes e_k^\ep(\xi x_j)\nabla_{x_j}F(\zeta)\big)+\frac{1}{2}\sigma_0^2\sum_{i\in\cN(\zeta)}\Delta_{x_i}F(\zeta)\label{gen-1}\\
&=\frac{1}{2}\sigma^2\sum_{i,j\in\cN(\zeta)}\nabla_{x_i}\cdot \big(C_\ep(\xi(x_i-x_j))\nabla_{x_j}F(\zeta)\big)+\frac{1}{2}\sigma_0^2\sum_{i\in\cN(\zeta)}\Delta_{x_i}F(\zeta)\nonumber\\
\cL^N_JF(\zeta)&:=\frac{1}{2}\sum_{i\in\cN(\zeta)}\sum_{j\in\cN(\zeta): j\neq i}r_{ij}^N\left[F(\zeta^{-ij})-F(\zeta)\right]. \label{gen-2}
\end{align}
Here, $F: \left(\R^3\cup\{\emptyset\}\right)^N\to\R$ is a functional on the configuration space, $\nabla_{x_i}$ denotes partial gradient with respect to variable $x_i\in\R^3$ and similarly $\nabla_{x_i}\cdot$ denotes partial divergence.  We denote the index set of active particles in $\zeta$ by
\begin{align*}
\cN(\zeta)&:=\{i: x_i\in\zeta, x_i\neq\emptyset\}\subset\{1,2,...,N\},
\end{align*}
whereas $\zeta^{-ij}$ is a configuration obtained from $\zeta$ by setting its $i$-th and $j$-th slots to $\emptyset$, and keeping other slots unchanged. The $1/2$ in \eqref{gen-2} is due to double counting in the double sum of an annihilation event. We have used the fact that $e_k^\ep(x)$ are divergence free when calculating $\cL^N_D$ at step \eqref{gen-1}. Our assumptions actually imply (see \eqref{equiv-3iv}) that $\cL^N_DF(\zeta)$ can be equivalently written as non-divergence form
\begin{align*}
\cL^N_DF(\zeta)=\frac{1}{2}\sigma^2\sum_{i,j\in\cN(\zeta)}\text{Tr} \left(C_\ep(\xi (x_i-x_j))\nabla^2_{x_ix_j}F(\zeta)\right)+\frac{1}{2}\sigma_0^2\sum_{i\in\cN(\zeta)}\Delta_{x_i}F(\zeta).
\end{align*}
\begin{remark}
\red{$\cL_D^N$ accounts for the diffusion part of the dynamics, describing the diffusive movement of the particles between two consecutive annihilation events. The annihilation events happen at isolated times, which are sudden jumps in the particle configuration $\{\zeta_t^N\}_{t\ge0}$, and are described by the jump part of the generator $\cL_J^N$. While the form of $\cL_D^N$ follows by an application of It\^o's formula to \eqref{sde}, the jump operator \eqref{gen-2} can be written in terms of the jump rates $r_{ij}^N$ in a way that is familiar in interacting particle systems \cite{KL}. The total generator $\cL^N$ is then the sum of the two. The It\^o-Dynkin formula states that $t\mapsto F(\zeta_t^N)-F(\zeta_0^N)-\int_0^t \cL^NF(\zeta_s^N)ds$ is a $\cG_t$-martinagle. Naturally this martingale can be decomposed into two parts, one due to diffusion and the other one due to jump.}
\end{remark}

For the kernel function $\theta: \R^3\to\R_+$ that regulates the coalescence rate \eqref{abstract rate}, it is enough to assume that it is H\"older continuous $\mathsf C^\alpha(\R^3)$ for some $\alpha\in(0,1)$, compactly supported in the unit ball $B(0,1)\subset\R^3$ with $\theta(x)=\theta(-x)$, $\int_{\R^3} \theta (x) dx =1$,  and $\theta(0)=0$. Throughout, we  denote its rescaled version by
\[
\theta^\ep(x):=\ep^{-3}\theta(\ep^{-1}x),
\]
which has compact support in a ball $B(0,\ep)$ of radius $\ep$. With this notation, the coalescence rate can be written as
\begin{align}\label{rate}
r^N_{ij}=\frac{R_{0}}{N}\theta^\ep(x_i-x_j).
\end{align}

Denote the process of empirical measure of the {\it active} particles by
\begin{align}\label{empirical}
\mu^N_t(dx):=\frac{1}{N}\sum_{i\in\cN(\zeta_t^N)}\delta_{x_i^N(t)}(dx) \quad \in \cD\left([0,\infty); \cM_{+,1}(\R^3)\right),\quad t\ge0,
\end{align}
where $\cM_{+,1}(\R^3)$ denotes the space of sub-probability measures over $\R^3$ endowed with the weak topology, and $\cD\left([0,\infty); \cM_{+,1}(\R^3)\right)$ denotes the Skorohod space of c\`adl\`ag processes taking values in $\cM_{+,1}(\R^3)$ endowed with the Skorohod topology.

We introduce the limiting \abbr{PDE} with unknown $f(t,x): [0,T]\times\R^3\to\R_+$:
\begin{align}\label{limit-pde}
\begin{cases}
\partial_tf(t,x)&=\frac{1}{2}\left(\sigma_0^2+\sigma^2\right)\Delta f(t,x)-\ovl\cR f(t,x)^2,\\[5pt]
f(0,x)&=f_0(x),
\end{cases}
\end{align}
where $\ovl\cR\in\R_+$ is given by
\begin{align}\label{corr-bis}
\ovl\cR=\int_{\R^3}R_0\theta(x)\left(1+u(x)\right)dx.
\end{align}
and $u:\R^3\to\R$ solves the auxiliary \abbr{PDE}
\begin{align}\label{cell-intro}
\sigma_0^2\Delta u(x)+\sigma^{2}\nabla\cdot\big(\omega(\xi x)\nabla u(x)\big)  =R_{0}%
\theta(x)\big(  1+u(x) \big).
\end{align}
We will comment later that \eqref{cell-intro} has a $\mathsf C^{2, \alpha}(\R^3)$ solution that  satisfies $u(x)\in[-1,0]$ for all $x\in\R^3$, see \eqref{exp-sol} and Lemma \ref{lem:sol-bounds}, and is unique under the constraint that $|u(x)|\to0$ as $|x|\to\infty$.
\begin{remark}
An alternative expression for $\ovl\cR$ is
\begin{align}\label{corr}
\ovl\cR:=\left(\sigma_0^2+\sigma^2\right)\int_{\R^3}\Delta u(x)dx,
\end{align}
and it is in this form that our proof will yield.  It can be seen from integration by parts in \eqref{cell-intro}; indeed since $\omega(x)=I_3-C (x)$ and $C $ has compact support, we have 
\begin{align*}
(\sigma_0^2+\sigma^2)\int_{\R^3}\Delta u(x)dx& = \sigma^2\int_{\R^3}\nabla\cdot \Big(C (\xi x)\nabla u(x)\Big)dx+\int_{\R^3}R_0\theta(x)(1+u(x))dx\\
&=\int_{\R^3}R_0\theta(x)\big(1+u(x)\big)dx.
\end{align*}
\end{remark}

Solutions to \eqref{limit-pde} are understood in the following weak sense. 
\begin{definition}\label{def:weak}
Assume $f_0\in L^1\cap L^\infty(\R^3;\R_+)$. We say that $f\in L^\infty\left([0,T]; L^1(\R^3;\R_+)\right)$ is a weak solution to \eqref{limit-pde} if 
\begin{align*}
\int_{\R^3} f(t,x)\phi(x)dx &=\int_{\R^3} f_0(x)\phi(x)dx \\&\quad\quad + \frac{1}{2}(\sigma_0^2+\sigma^2)\int_0^t\int_{\R^3} f(s,x)\Delta \phi(x)dxds-\ovl\cR\int_0^t\int_{\R^3} \phi(x)f(s,x)^2dxds
\end{align*}
holds for any $t\in[0,T]$ and all test function $\phi\in C^2_c(\R^3)$.
\end{definition}
One can show that such weak solutions are unique, for a proof in a very similar context see \cite[Section 3]{fh}. The main result of this article is as follows.

\begin{theorem}\label{thm:main}
Assume that the initial density $f_0$ satisfies Condition \ref{cond:ini} and the scaling relation \eqref{abstract scaling} holds. Then for every $T<\infty$, the empirical measure $(\mu^N_t(dx))_{t\in[0,T]}$  \eqref{empirical} of the particle system converges in probability as $N\to\infty$, in the space $\cD\big([0,T]; \cM_{+,1}(\R^3)\big)$, to a limit $\{\ovl \mu_t(dx)\}_{t\in[0,T]}$. The limit is deterministic, absolutely continuous with respect to Lebesgue measure for every $t$, i.e. $\ovl \mu_t(dx)=f(t,x)dx$, where the density $f(t,x)$ is the unique weak solution of the \abbr{PDE} \eqref{limit-pde}, in the sense of Definition \ref{def:weak}.
\end{theorem}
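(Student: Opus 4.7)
The plan is to follow the classical hydrodynamic-limit scheme: (i) write an It\^o--Dynkin semi-martingale decomposition for the linear test functional $F_\phi(\zeta):=\tfrac{1}{N}\sum_{i\in\cN(\zeta)}\phi(x_i)$; (ii) establish tightness of $\{\mu^N\}_N$ in $\cD([0,T];\cM_{+,1}(\R^3))$ via Aldous' criterion applied to $\langle\mu^N_\cdot,\phi\rangle$ for $\phi\in C^2_c(\R^3)$; (iii) identify any subsequential limit as a weak solution of \eqref{limit-pde}; (iv) invoke the uniqueness of weak solutions in the spirit of \cite{fh} to upgrade subsequential convergence in law to convergence in probability to the deterministic limit. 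Applied to $F_\phi$, the diffusion part $\cL_D^N F_\phi$ collapses immediately to $\tfrac{1}{2}(\sigma^2+\sigma_0^2)\langle\mu_s^N,\Delta\phi\rangle$, because $F_\phi$ is a sum of one-point functions so only the diagonal $i=j$ terms survive and $C_\ep(0)=I_3$. The jump part produces the singular pair sum
\[
\cL_J^N F_\phi(\zeta)=-\frac{R_0}{N^2}\sum_{i\neq j}\theta^\ep(x_i-x_j)\,\phi(x_i).
\]

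The central obstacle is that, at the mean-free-path scaling $N\ep=1$, this cannot simply be replaced by $R_0\int\phi f^2\,dx$: the pair distribution of particles near distance $\ep$ is distorted by the common noise and the internal diffusion, and the correct coefficient is $\ovl\cR=R_0\int\theta(1+u)\,dx$ with $u$ the bounded, decaying solution of \eqref{cell-intro}. To extract this correction I would introduce the corrected test functional
\[
\Phi_\phi(\zeta):=\frac{1}{N}\sum_{i\in\cN(\zeta)}\phi(x_i)+\frac{1}{N^2}\sum_{i\neq j}\phi(x_i)\,u_\ep(x_i-x_j),\qquad u_\ep(z):=u(z/\ep).
\]
A direct rescaling of \eqref{cell-intro} using $N\ep=1$ gives
\[
\sigma_0^2\Delta u_\ep(z)+\sigma^2\nabla_z\!\cdot\!\big(\omega(\xi z/\ep)\nabla u_\ep(z)\big)=\frac{R_0}{N}\theta^\ep(z)\big(1+u_\ep(z)\big),
\]
which is exactly the ratio needed so that $\cL_D^N$, acting on the pair piece of $\Phi_\phi$, cancels the singular factor $\tfrac{R_0}{N}\theta^\ep$ generated by $\cL_J^N F_\phi$ and replaces it with the regular kernel $\tfrac{R_0}{N}\theta^\ep(1+u_\ep)$. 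Testing against $\mu_s^N\otimes\mu_s^N$ and using $R_0\int\theta(1+u)\,dx=\ovl\cR$ then delivers the target quadratic limit $-\ovl\cR\int\phi f^2\,dx$.

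The main technical difficulty is to show that the residual errors produced by this cancellation are $o(1)$ in probability uniformly on $[0,T]$. They come in three flavours: (a) a cross-gradient term of the shape $\tfrac{1}{N^2}\sum_{i\neq j}\nabla\phi(x_i)\cdot A_\ep(x_i-x_j)\nabla u_\ep(x_i-x_j)$ generated when $\cL_D^N$ hits the pair piece of $\Phi_\phi$ and the derivative falls once on $\phi$ and once on $u_\ep$, controlled by the $L^1$-integrability of $\nabla u$; (b) a lower-order piece $\tfrac{1}{2}(\sigma^2+\sigma_0^2)\tfrac{1}{N^2}\sum_{i\neq j}\Delta\phi(x_i) u_\ep(x_i-x_j)$, controlled by the $L^1$-integrability of $u$; (c) triple-particle contributions arising from $\cL_J^N$ acting on the pair piece of $\Phi_\phi$, which scale as $O(1/N)$ using the sparseness encoded by $N\ep=1$ together with the boundedness and decay of $u$ guaranteed by Lemma \ref{lem:sol-bounds}. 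One also needs $\Phi_\phi-F_\phi=o(1)$ in probability, which follows from the same sparseness applied to $\sum_{i\neq j}|u_\ep(x_i-x_j)|$. The It\^o--Dynkin martingale has quadratic variation of order $1/N$ and therefore vanishes in $L^2$. Combining these ingredients yields the martingale problem characterizing the limit; PDE uniqueness then closes the argument and promotes the convergence to the stated convergence in probability.
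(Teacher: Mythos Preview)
Your overall plan (Dynkin decomposition, tightness, identification of limit, uniqueness) is correct, and you have correctly identified that the whole difficulty is the nonlinear pair term. But the corrector you propose does not do the job, because of a scaling mismatch.

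With $u_\ep(z):=u(z/\ep)$ your rescaled cell equation reads
\[
\sigma_0^2\Delta u_\ep+\sigma^2\nabla\!\cdot\!\big(\omega(\xi\cdot/\ep)\nabla u_\ep\big)=\frac{R_0}{N}\theta^\ep(1+u_\ep),
\]
so when $\cL_D^N$ acts on the pair piece $\tfrac{1}{N^2}\sum_{i\neq j}\phi(x_i)u_\ep(x_i-x_j)$ the leading contribution is $\tfrac{R_0}{N^3}\sum_{i\neq j}\phi(x_i)\theta^\ep(x_i-x_j)(1+u_\ep)$, which is $O(1/N)$ relative to the jump term $\cL_J^N F_\phi=-\tfrac{R_0}{N^2}\sum_{i\neq j}\theta^\ep\phi$. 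There is no cancellation: the singular quantity $\tfrac{R_0}{N^2}\sum\theta^\ep\phi$ survives unchanged, and you are back to the problem you started with. If you try to fix this by scaling the corrector up by $N$ (equivalently, using the paper's $u^\ep(x)=\ep^{-1}u(x/\ep)$ in the pair piece $\tfrac{1}{N^2}\sum\phi\,u^\ep$), then $|u^\ep(x)|\lesssim(|x|\vee\ep)^{-1}$ and the corrector $\Phi_\phi-F_\phi$ is $O(1)$, not $o(1)$; moreover the triple-particle terms from $\cL_J^N$ on the pair piece are then also $O(1)$ and your claim (c) fails. Either way, the step ``testing against $\mu^N\otimes\mu^N$ and using $R_0\int\theta(1+u)=\ovl\cR$'' is precisely the singular limit that has not been justified.

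The paper resolves this by working not with $u^\ep$ but with the \emph{difference} $v^{\ep,z}(x):=u^\ep(x+z)-u^\ep(x)$, and applying the generator to $\tfrac{1}{N^2}\sum_{i\neq j}v^{\ep,z}(x_i-x_j)\phi(x_i)\psi(x_j)$. The cell equation is used only on the unshifted part, producing in the expansion the expression
\[
\frac{1}{N^2}\sum_{i\neq j}\Big[(\sigma_0^2+\sigma^2)\Delta u^\ep(x_i-x_j+z)-R_0\theta^\ep(x_i-x_j)\Big]\phi(x_i)\psi(x_j),
\]
which is the pre-limit of the key replacement statement (Proposition~\ref{ppn:stoss}). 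The $z$-shift is essential: it makes the corrector $v^{\ep,z}$ and all residual terms (including the triple sums) vanish in the iterated limit $\ep\to0$ then $|z|\to0$, via the pointwise Green-function bounds $|u^\ep(x+z)-u^\ep(x)|\lesssim|z||x|^{-2}$ for $|x|\ge 2|z|+2\ep$. Only after this replacement is the singular kernel $R_0\theta^\ep$ traded for the shifted mollifier $(\sigma_0^2+\sigma^2)\Delta u^\ep(\cdot+z)$, whose total mass is $\ovl\cR$; a further averaging over $z$ with a smooth mollifier $\zeta^\delta$ then decouples $\ep$ from the approximation of the diagonal and allows passage to the limit against $\mu^N\otimes\mu^N$.
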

This result, in particular the form of the average coalescence rate $\ovl\cR$, is a variant of \cite{hr}. Compared to that work, we have simplified the model by focusing only on particles of the same mass, but we have introduced a random environment to model the turbulent fluid. In our previous model \cite{fh-alea} in a random environment, we have chosen the dense particle scaling $\ep^3 N=1$ which, in the framework studied here, would lead to $\ovl\cR=R_0$.

\subsection{Sketch of proof}\label{sec:sketch}
Since the proof of our main result is somewhat technical, we will first provide a general sketch in this section. 

Our goal is to show that the empirical measure process, $(\mu_t^N(dx))_{t\in[0,T]}$, which is a measure-valued stochastic process, converges to the unique weak solution of the \abbr{PDE} \eqref{limit-pde}. Granted that we have relative compactness of this sequence, we need to show that any subsequential limit is a weak solution of the \abbr{PDE}. Since weak formulation of the \abbr{PDE} is  in terms of integration against test functions $\phi$, it is natural and standard to look at the real-valued stochastic process 
\[
\langle \mu_t^N, \phi\rangle
\]
for any fixed $\phi$, and show that asymptotically it satisfies  the weak formulation of the \abbr{PDE} with test function $\phi$.

At the level of fixed $N$, by applying It\^o-Dynkin formula to $\langle \mu_t^N, \phi\rangle$, we can readily get that it satisfies a so-called semimartingale equation. Such equation is still random, with fluctuation terms present, and it only partially looks like the limit \abbr{PDE}. In Section \ref{sec:emp}, we will show that the fluctuation terms (they are martingales) vanish in the limit $N\to\infty$ in a probabilistic sense, so that the limit is deterministic and not a stochastic \abbr{PDE}. We will also see that all the other terms, but one of them \eqref{id-emp}, are in the form of the empirical measure $\mu_t^N$ integrated against certain test function. These are terms linear in the empirical measure, and will converge to corresponding linear terms of the \abbr{PDE}, just because the empirical measure converges weakly (along subsequences). 

There is however one term \eqref{id-emp} which is not linear in the empirical measure. The desired limit for this term is quadratic in the \abbr{PDE} solution, with a particular multiplicative constant $\ovl \cR$, but it is not at all clear if and how this stochastic term will converge. This difficulty is common in the theory of interacting particle systems, see \cite{KL}, namely the main work is always concentrated on proving that the nonlinear terms converge. For particle systems evolving on lattices, there are methods bearing the name of ``replacement lemma'' that deal with such issues, but  that requires detailed knowledge of invariant measures of the particle system. Lacking such knowledge in our setting of interacting diffusions in the continuum, we instead rely on a very special technique due to Hammond-Rezakhanlou \cite{hr}. We call it It\^o-Tanaka trick, with roots in Tanaka's formula in stochastic calculus; this is developed in Sections \ref{sec:cell}-\ref{sec:neg} of our paper. The key statement to prove here is Proposition \ref{ppn:stoss}, which directly dictates the form of the multiplicative constant $\ovl\cR$ in the limit \abbr{PDE}, see the argument following it, \eqref{interm-2}. However, we do not have enough intuitive or physical understanding of what this proposition means. Vaguely, we may say that it is also a sort of replacement lemma. From Proposition \ref{ppn:stoss}, it is relatively easy to deduce the desired convergence of the nonlinear stochastic term, because it allows to disentangle $\ep$ and $N$ by introducing the auxiliary variable $z$.

The remaining effort is thus concentrated on proving Proposition \ref{ppn:stoss}. Let us look back at the nonlinear stochastic term \eqref{id-emp}, which is in the form of an averaged double-sum involving $\theta^\ep(x_i^N(t)-x_j^N(t))$. Recall that $\ep\to0$ very fast as $N\to\infty$, and $\theta^\ep$ is approaching the Dirac delta. Furthermore, the argument of $\theta^\ep$ is the two-point motion that is responsible for coalesence. Due to the singular nature of $\theta^\ep$, this expression is highly unstable. The idea of It\^o-Tanaka trick is that instead of working with $\theta^\ep$, we can equivalently work with a much more regular function. Note that regularity here should be understood as not for fixed $\ep$, but as $\ep\to0$. In Section \ref{sec:cell}, we construct an auxiliary function $u^\ep$, solution of a carefully-chosen, uniformly elliptic, second-order, divergence form \abbr{PDE} \eqref{cell} (the ``cell equation''), whose right-hand side contains $\theta^\ep$. As a result, $u^\ep$ is ``two derivatives'' more regular than $\theta^\ep$ and not a singular function at all. By rescaling the cell equation, we can obtain an $\ep$-independent elliptic equation \eqref{cell-unscaled} that can be solved using Green's function.  Here, the capacity scaling relation \eqref{abstract scaling} between $\ep$ and $N$ plays an important role in correctly rescaling the cell equation. Then, in Section \ref{sec:tanaka} we consider a new averaged double-sum process \eqref{functional-3}, similar to the nonlinear term \eqref{id-emp} we started with, but now involving the more regular function $u^\ep$. By applying the It\^o-Dynkin formula to this new process, and performing substitution using the cell equation,  we obtain an identity in which the pre-limit of Proposition  \ref{ppn:stoss} appears, together with many other (presumably minor) terms. Now, to show Proposition \ref{ppn:stoss}, it is enough to control all these minor terms, since we have an identity in this manouvre. More precisely, we need to prove that all the minor terms are negligible in suitable probabilistic sense. Since they all involve $u^\ep$ and $\nabla u^\ep$, for which we have enough control through Green's functions, we can prove their negligibility which is the content of Section \ref{sec:neg}.

\medskip
The rest of the article is organized as follows. In Section \ref{sec:emp} we write the equation satisfied by the empirical measure of the particle system, and identify the nonlinear term whose convergence is the main issue. In Sections \ref{sec:cell} and \ref{sec:tanaka} we introduce the cell equation and the It\^o-Tanaka trick, main ingredients in the proof of Theorem \ref{thm:main}. In Section \ref{sec:neg} we show the negligibility of all the minor terms coming out of It\^o-Tanaka trick. We omit the proof of tightness of the particle system, the existence of limit density and the uniqueness of the limit \abbr{PDE}, as they are similar to arguments detailed in our previous works \cite{fh, fh-alea, h-ejp}. In Section \ref{sec:potential} we tie our main theorem with certain facts in potential theory, so as to deduce the dependence of coalesence efficiency on $\xi$.

\section{Equation satisfied by the empirical measure}\label{sec:emp}

Now, we are ready to derive a semimartingale equation for the empirical measure process \eqref{empirical}. Fixing $T<\infty$ and any $\phi\in C^2_c(\R^3)$, we apply the It\^o-Dynkin formula (with infinitesimal generator given in \eqref{generator}) to the functional of configurations
\begin{align*}
F_1(\zeta_t^N):=\langle\mu_t^N, \phi\rangle = \frac{1}{N}\sum_{i\in\cN(\zeta_t^N)}\phi\left(x_i^N(t)\right), \quad t\ge0,
\end{align*}
where we use the shorthand $\langle \mu, f\rangle:=\int f d\mu$ for pairings between a function and a measure,
and we obtain that 
\begin{align}
&\langle\mu_T^N, \phi\rangle- \langle\mu_0^N, \phi\rangle \label{linear-1}\\
&=\int_0^T\frac{1}{N}\frac{\sigma^2}{2}\sum_{i\in\cN(\zeta_t^N)}\nabla\cdot\big(C_\ep(\xi(x_i^N(t)-x_i^N(t)))\nabla \phi(x_i^N(t))\big)dt\\
&+\int_0^T\frac{1}{N}\frac{\sigma^2_0}{2}\sum_{i\in\cN(\zeta_t^N)}\Delta\phi(x_i^N(t))dt\label{linear-3}\\
&+\int_0^T\frac{1}{N}\sigma\sum_{i\in\cN(\zeta_t^N)}\nabla \phi\left(x_i^N(t)\right)\cdot\sum_{k\in  \N}\lambda_ke^\ep_k(\xi x_i^N(t))dW_k(t)\label{diff-mg-1}\\
&+\int_0^T\frac{1}{N}\sigma_0\sum_{i\in\cN(\zeta_t^N)}\nabla \phi(x_i^N(t))\cdot dB_i(t)\label{diff-mg-2}\\
&-\int_0^T\frac{1}{2}\frac{R_0}{N^2}\sum_{i\in\cN(\zeta_t^N)}\sum_{j\in\cN(\zeta_t^N): j\neq i}\theta^\ep(x_i^N(t)-x_j^N(t))\big[\phi(x_i^N(t))+\phi(x^N_j(t))\big]dt  \label{id-emp}\\
&+M_J^N(T),\nonumber
\end{align}
where, \red{while \eqref{diff-mg-1}-\eqref{diff-mg-2} are martinagles associated with diffusion dynamics,}  $M_J^N(t)$ is a martingale associated with jumps, that we do not write out explicitly. Since $C_\ep(0)=I_3$, we have 
\begin{align*}
\frac{1}{N}\frac{\sigma^2}{2}\sum_{i\in\cN(\zeta_t^N)}\nabla\cdot\big(C_\ep(\xi(x_i^N(t)-x_i^N(t)))\nabla \phi(x_i^N(t))\big)=\frac{\sigma^2}{2}\langle\mu_t^N, \Delta\phi\rangle.
\end{align*}
Also, since we assumed that $\theta$ is symmetric, we have
\begin{align*}
&\frac{1}{2}\frac{R_0}{N^2}\sum_{i\in\cN(\zeta_t^N)}\sum_{j\in\cN(\zeta_t^N): j\neq i}\theta^\ep(x_i^N(t)-x_j^N(t))\big[\phi(x_i^N(t))+\phi(x^N_j(t))\big]\\
&=\frac{R_0}{N^2}\sum_{i\in\cN(\zeta_t^N)}\sum_{j\in\cN(\zeta_t^N): j\neq i}\theta^\ep(x_i^N(t)-x_j^N(t))\phi(x_i^N(t)).
\end{align*}
It turns out that the convergence of the linear terms \eqref{linear-1}-\eqref{linear-3} is not an issue (see e.g. \cite{fh-alea, h-ejp}), and we will prove in Lemmas \ref{lem:mg-3iff}-\ref{lem:mg-jump} that the martingale terms vanish, as $N\to\infty$. The only difficulty is the convergence of the nonlinear term \eqref{id-emp}.

\medskip
We introduce a tool that will be useful also for later sections. Consider on the same filtered probability space $\big(\Omega, \cF, \{\cG_t\}_{t\ge0}, \P\big)$ an anxiliary ``free'' particle system $y_i^\ep(t)$, $i\in \N$ which start at $t=0$ with the same initial condition as the true system $y_i^\ep(0)=x_i(0)$, perform the same \abbr{SDE} \eqref{sde} driven by the same collections of Brownian motions $\{W_k(t)\}_{k\in  \N}$ and $\{B_i(t)\}_{i=1}^\infty$, but here the particles do not interact/annihilate, i.e. 
\begin{align}\label{free-sys}
dy_i^\ep(t)=\sigma\sum_{k\in  \N}\lambda_k e_{k}^{\epsilon}\left(\xi y_i^\ep(t)\right)  dW_k(t)+\sigma_0\, dB_i(t), \quad t\ge0, \;i=1,2,...
\end{align}
with infinite life time. Note that $y_i^\ep$ still depends on $\ep$.

Since the initial conditions and dynamics of the true and auxiliary systems agree, there is an obvious coupling between them under which whenever $x_i^N(t)$ is alive at time $t$, its trajectory on the time interval $[0,t]$ coincides with that of $y_i^\ep(t)$, for every $i=1,2,..,N$. By construction, the auxiliary system is exchangeable, \red{meaning that the law of $\big(y_{i_1}^\ep(t), y_{i_2}^\ep(t), ..., y_{i_L}^\ep(t)\big)_{t\ge0}$ is unchanged under any permutation of the indices $(i_1,i_2,...,i_L)$ and any finite $L\in \N$}. Now consider a pair $\big(y_1^\ep(t), y^\ep_2(t)\big)$ and it is known classically that it has a joint density $f_t^{12,\ep}(y_1,y_2)$ that evolves by the forward Kolmogorov equation  (or Fokker-Planck equation)
\begin{align*}
\begin{cases}
\partial_tf_t^{12,\ep}(\underline y)&=\cL_\ep^*f_t^{12,\ep}(\underline y), \\[5pt]
 f_0^{12,\ep}(\underline y)&=f_0(y_1)f_0(y_2), \quad\quad  \underline y:=(y_1,y_2)\in(\R^3)^2,
 \end{cases}
\end{align*}
where $\cL^*_\ep$ is a uniformly negative divergence-form operator that acts on functions $g:(\R^3)^2\to\R$ by
\begin{align*}
\cL_\ep^*g\big(\underline y\big)=\nabla_{\underline y}\cdot \big(A_\ep\big(\underline y\big)\nabla_{\underline y}\big) 
\end{align*}
where 
\begin{align*}
\nabla_{\underline y}&:=\big(\nabla_{y_1}, \nabla_{y_2}\big), \\
A_\ep(\underline y)&:=\frac{1}{2}\sigma_0^2I_6+\frac{1}{2}\sigma^2\begin{bmatrix}
I_3  &C_\ep(\xi(y_2-y_1))\\
C_\ep(\xi(y_1-y_2)) & I_3\end{bmatrix}\in\M_6.
\end{align*}
Although $\cL_\ep^*$ depends on $\ep$, its matrix coefficient $A_\ep$ are measurable and  bounded uniformly in $\ep$. Indeed, $C_\ep(x)=C(x/\ep)$ and $C(x)$ is assumed smooth and compactly supported. By Aronson's estimate \cite[Theorem 1]{aron} for fundamental solutions of uniformly parabolic second-order divergence-form operator with measurable bounded coefficients, there exist some finite constants $\mathsf K_i=\mathsf K_i \left(\sigma_0, \sigma, \|C\|_{L^\infty}\right)$ (depending only on the bounds on the coefficients, hence independent of $\ep$), $i=1,2$, such that  
\begin{align*}
q^{\ep}\big(t; \underline y_0, \underline{y}\big)\le \frac{\mathsf K_1}{(2\pi \mathsf K_2t)^3}e^{-\frac{|\underline y-\underline{y}_0|^2}{2\mathsf K_2t}},
\end{align*}
where $q^{\ep}\big(t; \underline y_0, \underline{y}\big)$ denotes the fundamental solution of $\partial_t-\cL_\ep^*$, and $\|C\|_{L^\infty}:=\max_{\alpha,\beta=1}^3 \|C^{\alpha\beta}\|_{L^\infty(\R^3)}$.

This allows to derive exponential tail bounds on $f_t^{12,\ep}$. Indeed, let $Y_t$ denote a Gaussian random vector on $(\R^3)^2$ with density $\frac{1}{(2\pi \mathsf K_2t)^3}e^{-\frac{|\underline y|^2}{2\mathsf K_2t}}$, then 
\begin{align*}
f_t^{12,\ep}\big(\underline y\big)&=\int_{(\R^3)^2} f_0^{12,\ep}\big(\underline {y}_0\big)q^{\ep}\big(t; \underline y_0, \underline{y}\big)d\underline{y}_0\\
&\le \int_{(\R^3)^2} f_0^{12,\ep}\big(\underline {y}_0\big)\frac{\mathsf K_1}{(2\pi \mathsf K_2t)^3}e^{-\frac{|\underline y-\underline{y}_0|^2}{2\mathsf K_2t}} d\underline{y}_0\\
&=\mathsf{K_1}\E \left[f_0^{12,\ep}(\underline{y}-Y_t)\right],
\end{align*}
where $\E$ acts on the random variable $Y_t$.
Since $f_0^{12,\ep}(\underline y)=f_0(y_1)f_0(y_2)$ which is uniformly bounded by $\Gamma^2$ and compactly supported in $B(0, \sqrt{2}L_0)$ by Condition \ref{cond:ini}, we have 
\begin{align*}
f_t^{12,\ep}\big(\underline y\big)\le \mathsf{K_1}\E \left[f_0^{12,\ep}(\underline{y}-Y_t)\right]&\le \mathsf{K_1}\Gamma^2\P\left(|\underline{y}-Y_t|\le \sqrt{2}L_0\right)\\
&\le \mathsf{K_1} \Gamma^2\P\left(|Y_t|\ge |\underline y|-\sqrt{2}L_0\right)\\
&\le 2\mathsf{K_1} \Gamma^2 e^{-\frac{\left( |\underline y|-\sqrt{2}L_0\right)_+^2}{4d\mathsf{K_2} t}}\le \mathsf{K}'e^{-|\underline y|^2/\mathsf{K}'},
\end{align*}
for any $t\in[0,T]$ and some finite constant $\mathsf{K}'$ that depends only on $\mathsf{K_1}, \mathsf{K_2}, \Gamma, T, L_0$, where we used the tail bound for a Gaussian vector.

A similar argument can also show that the joint density $f_t^{12...\ell,\ep}(y_1,...,y_\ell)$ of an $\ell$-tuple of free particles $(y_1^\ep(t), ..., y_\ell^\ep(t))$, for any fixed $\ell\in \N$, also satisfies an exponential decay, uniformly for $t\in[0,T]$ and $\ep$. We will only use $\ell=1,2,3,4$ in the sequel. That is, we have proven that
\begin{proposition}\label{ppn:gaussian}
There exists a finite constant $C_0=C_0\left(\sigma_0, \sigma, \|C\|_{L^\infty}, \Gamma, T, L_0, \ell\right)$ such that for any $t\in[0,T]$ and $\ep\in(0,1)$, we have 
\begin{align}
f_t^{1...\ell,\ep}(y_1,y_2,...,y_\ell)&\le C_0e^{-\frac{|y_1|^2+...+|y_\ell|^2}{C_0}}, \label{2-joint}
\end{align}
for $\ell=1,2,3,4$.
\end{proposition}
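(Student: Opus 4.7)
The result for $\ell=2$ is essentially done in the display preceding the statement; the plan is to carry out the same three-step argument uniformly in $\ell \in \{1,2,3,4\}$. The case $\ell=1$ in fact follows by marginalizing the $\ell=2$ bound, but it is just as easy (and more transparent) to treat all $\ell$ in parallel. First, I would write down the Fokker-Planck equation for the joint density of $(y_1^\ep(t),\ldots,y_\ell^\ep(t))$: applying It\^o's formula to the free SDE \eqref{free-sys}, the covariation between $y_i^\ep$ and $y_j^\ep$ is $\sigma^2 C_\ep(\xi(y_i-y_j))\,dt + \delta_{ij}\sigma_0^2 I_3\, dt$, so $\partial_t f_t^{1\ldots\ell,\ep} = \cL_\ep^{*,(\ell)} f_t^{1\ldots\ell,\ep}$ with $\cL_\ep^{*,(\ell)} g(\underline y) = \nabla_{\underline y} \cdot \bigl(A_\ep^{(\ell)}(\underline y)\nabla_{\underline y} g(\underline y)\bigr)$ and
$$
A_\ep^{(\ell)}(\underline y) = \tfrac{1}{2}\sigma_0^2 I_{3\ell} + \tfrac{1}{2}\sigma^2\,\bigl(C_\ep(\xi(y_i-y_j))\bigr)_{i,j=1}^\ell \in \M_{3\ell},
$$
viewed as a block matrix with $3\times 3$ blocks. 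The initial condition is $f_0^{\otimes \ell}$.

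Second, I would verify that $A_\ep^{(\ell)}$ is measurable, symmetric, uniformly bounded, and uniformly elliptic with constants independent of $\ep$ and $\underline y$. Boundedness is immediate from $\|C\|_{L^\infty}$. For ellipticity the key observation is that the off-diagonal block matrix $\bigl(C_\ep(\xi(y_i-y_j))\bigr)_{i,j}$ is precisely the covariance matrix of the Gaussian vector $\bigl(W(\xi y_1/\ep),\ldots,W(\xi y_\ell/\ep)\bigr)$, hence positive semi-definite. Consequently $A_\ep^{(\ell)} \ge \tfrac{1}{2}\sigma_0^2\, I_{3\ell}$ pointwise in $\underline y$ and uniformly in $\ep$. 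It is precisely here that the independent noise $\sigma_0 dB_i$ plays its role; without it the matrix would degenerate whenever the $y_i$'s coincide.

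Third, Aronson's bound \cite{aron}, applied to $\cL_\ep^{*,(\ell)}$ on $\R^{3\ell}$ with the bounds obtained in step two, yields constants $\mathsf{K}_1^{(\ell)}, \mathsf{K}_2^{(\ell)}$ depending only on $\sigma_0, \sigma, \|C\|_{L^\infty}, \ell$ such that the fundamental solution satisfies
$$
q^{\ep,(\ell)}\bigl(t; \underline y_0, \underline y\bigr) \le \frac{\mathsf{K}_1^{(\ell)}}{(2\pi \mathsf{K}_2^{(\ell)} t)^{3\ell/2}}\, \exp\!\Bigl(-\tfrac{|\underline y-\underline y_0|^2}{2\mathsf{K}_2^{(\ell)} t}\Bigr).
$$
Convolving against $f_0^{\otimes\ell}$, which by Condition \ref{cond:ini} is bounded by $\Gamma^\ell$ and compactly supported in $B(0, \sqrt{\ell}\, L_0) \subset (\R^3)^\ell$, and then running the same shift-and-Gaussian-tail estimate as in the $\ell=2$ computation in the excerpt, gives
$$
f_t^{1\ldots\ell,\ep}(\underline y) \le \mathsf{K}'_\ell \exp\!\bigl(-|\underline y|^2/\mathsf{K}'_\ell\bigr),
$$
uniformly in $t\in[0,T]$ and $\ep\in(0,1)$. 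Taking $C_0 = \max_{1 \le \ell \le 4} \mathsf{K}'_\ell$ yields the claimed bound.

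The only non-routine step is verifying uniform ellipticity of $A_\ep^{(\ell)}$, and even that reduces to the Gaussian-covariance interpretation plus the presence of the $\sigma_0$-term; no new analytical tool beyond Aronson is needed, and the argument is fully parallel to the one written out for $\ell=2$.
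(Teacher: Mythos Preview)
Your proposal is correct and follows essentially the same approach as the paper: the paper carries out the $\ell=2$ case in detail (Fokker--Planck equation, Aronson's bound, convolution with the compactly supported bounded initial data, Gaussian tail estimate) and then simply states that the same argument works for general $\ell$. You have made the extension to general $\ell$ explicit and, in particular, you are more careful than the paper in verifying the uniform ellipticity of $A_\ep^{(\ell)}$ via the positive semi-definiteness of the covariance block matrix plus the $\tfrac{1}{2}\sigma_0^2 I_{3\ell}$ term---a point the paper asserts but does not spell out.
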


We proceed to show next that the two martingale terms associated with diffusion vanish in $L^2(\P)$. 
\begin{lemma}\label{lem:free}
\begin{align*}
\E\Bigg|\int_0^T\frac{1}{N}\sum_{i\in\cN(\zeta_t^N)}\nabla \phi\left(x_i^N(t)\right)\cdot\sum_{k\in  \N}\lambda_ke^\ep_k(x_i^N(t))dW_k(t)\Bigg|^2=O\left(N^{-1}\right)+O\left(\ep^3\right).
\end{align*}
\end{lemma}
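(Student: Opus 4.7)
My plan is to invoke It\^o isometry, split the resulting double sum into diagonal and off-diagonal pieces, and control the off-diagonal piece via the coupling with the free particle system $y_i^\ep$ together with Proposition \ref{ppn:gaussian}.

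By It\^o isometry, using the covariance identity $\sum_{k\in\N}\lambda_k^2 e_k^\ep(\xi x)\otimes e_k^\ep(\xi y)=C_\ep(\xi(x-y))$ that follows from \eqref{covariance}, the quantity of interest equals
\begin{align*}
\frac{1}{N^2}\E\int_0^T\sum_{i,j\in\cN(\zeta_t^N)}\nabla\phi(x_i^N(t))^T C_\ep\big(\xi(x_i^N(t)-x_j^N(t))\big)\nabla\phi(x_j^N(t))\,dt,
\end{align*}
which I split into the diagonal ($i=j$) and off-diagonal ($i\ne j$) pieces. The diagonal part is immediate: $C_\ep(0)=I_3$, $|\cN(\zeta_t^N)|\le N$, and $\phi\in C^2_c(\R^3)$ give an upper bound $T\|\nabla\phi\|_\infty^2/N=O(N^{-1})$.

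For the off-diagonal contribution, I exploit that $C$ has compact support, say in $B(0,r_C)$, so $|C_\ep(\xi y)|\le \|C\|_\infty\mathbf{1}\{|y|\le r_C\ep/\xi\}$. Taking absolute values inside the expectation and using the coupling $(x_i^N(t),x_j^N(t))=(y_i^\ep(t),y_j^\ep(t))$ valid on the event of mutual survival, the sum over $(i,j)\in\cN(\zeta_t^N)^2$ with $i\ne j$ is dominated by the analogous sum over all ordered pairs in $\{1,\ldots,N\}^2$. Exchangeability of $\{y_i^\ep\}$ then collapses each of the $N(N-1)$ pairwise expectations to the single probability
\begin{align*}
\P\big(|y_1^\ep(t)-y_2^\ep(t)|\le r_C\ep/\xi\big)=\int_{|y_1-y_2|\le r_C\ep/\xi}f_t^{12,\ep}(y_1,y_2)\,dy_1\,dy_2.
\end{align*}
Proposition \ref{ppn:gaussian} bounds $f_t^{12,\ep}$ by a Gaussian kernel uniformly in $\ep$ and $t\in[0,T]$; integrating $y_2$ over a ball of volume $O(\ep^3)$ about $y_1$, and then $y_1$ against the residual Gaussian factor, yields $O(\ep^3)$ uniformly in $t\in[0,T]$. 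The $1/N^2$ prefactor then absorbs the $O(N^2)$ pairs, leaving $O(\ep^3)$ total for the off-diagonal contribution, which combined with the diagonal estimate closes the proof.

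The principal pillar is the $\ep$-independent Aronson bound of Proposition \ref{ppn:gaussian}: without it there is no way to convert the shrinking support of $C_\ep$ into an $O(\ep^3)$ probability, and one would be stuck with an a priori $O(1)$ estimate. Everything else is routine It\^o calculus and volume bookkeeping, so I expect no substantial obstacle beyond what Proposition \ref{ppn:gaussian} already supplies.
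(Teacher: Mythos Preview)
Your proof is correct and follows essentially the same approach as the paper: It\^o isometry, the covariance identity to recover $C_\ep$, diagonal/off-diagonal split, coupling with the free system, exchangeability, and Proposition~\ref{ppn:gaussian} to extract the $O(\ep^3)$ volume factor. The only cosmetic difference is that you pass through the indicator of the support of $C$ to reduce to a small-ball probability, whereas the paper keeps $\|C_\ep(\xi\cdot)\|_\infty$ and changes variables directly; both are the same volume bookkeeping.
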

\begin{proof}
By It\^o isometry and the independence between $W_k(t)$, $k\in  \N$, 
\begin{align*}
&\E\Bigg|\int_0^T\frac{1}{N}\sum_{i\in\cN(\zeta_t^N)}\nabla \phi\left(x_i^N(t)\right)\cdot\sum_{k\in  \N}\lambda_ke^\ep_k(\xi x_i^N(t))dW_k(t)\Bigg|^2\\
&=\E\int_0^T\frac{1}{N^2}\sum_{k\in  \N}\Bigg|\sum_{i\in\cN(\zeta_t^N)}\nabla \phi\left(x_i^N(t)\right)\cdot \lambda_ke^\ep_k(\xi x_i^N(t))\Bigg|^2dt\\
&=\E\int_0^T\frac{1}{N^2}\sum_{k\in  \N}\sum_{i,j\in\cN(\zeta_t^N)}\nabla\phi(x_i^N(t))^T \lambda_ke^\ep_k(\xi x_i^N(t))\otimes \lambda_ke^\ep_k(\xi x_j^N(t))\nabla\phi(x_j^N(t))dt\\
&=\E\int_0^T\frac{1}{N^2}\sum_{i,j\in\cN(\zeta_t^N)}\nabla\phi(x_i^N(t))^TC_\ep(\xi (x_i^N(t)-x_j^N(t)))\nabla\phi(x_j^N(t))dt\\
&\le \frac{T}{N}\|\phi\|_{C^1}^2+\E\int_0^T\frac{9}{N^2}\sum_{i\neq j\in\cN(\zeta_t^N)}\|\phi\|_{C^1}^2\Big\|C_\ep(\xi(x_i^N(t)-x_j^N(t)))\Big\|_\infty dt,
\end{align*}
where for a $3\times 3$ matrix $A$ we denote 
\begin{align}\label{matrix-max}
\|A\|_\infty:=\max_{\alpha,\beta=1}^3|A^{\alpha\beta}|.
\end{align}
Recall the auxiliary free particle system \eqref{free-sys} just introduced, which is exchangeable in law and coupled to $x_i^N$ for $i\in\cN(\zeta_t^N)$ in a natural way, hence we have the bound
\begin{align*}
&\E\int_0^T\frac{1}{N^2}\sum_{i,j\in\cN(\zeta_t^N)}\left\| C_\ep(\xi(x_i^N(t)-x_j^N(t)))\right\|_\infty dt\\
&\le\E\int_0^T\frac{1}{N^2}\sum_{i\neq j=1}^N\left\| C_\ep(\xi(y_i^\ep(t)-y_j^\ep(t)))\right\|_\infty dt\\
&\le\E\int_0^T\left\|C_\ep\left(\xi(y_1^\ep(t)- y_2^\ep(t))\right)\right\|_\infty dt.
\end{align*}
By the tail bound on the pair density \eqref{2-joint},
\begin{align*}
&\le T\iint_{(\R^3)^2}\| C\left(\ep^{-1}\xi(y_1-y_2)\right)\|_\infty C_0e^{-\frac{|y_1|^2+|y_2|^2}{C_0}} dy_1dy_2dt\\
&=C_0T\int_{\R^3}\|C\left(\ep^{-1}\xi y_1\right)\|_\infty dy_1\int_{\R^3}e^{-\frac{|y_2|^2}{C_0}} dy_2\\
&\lesssim \ep^3 \int_{\R^3} \|C\left(\xi y\right)\|_\infty dy=O(\ep^3),
\end{align*}
where the factor $\ep^3$ comes from change of variables, and the last integral is finite since every component of $C$ is smooth with compact support (hence bounded). This completes the proof.
\end{proof}

\begin{lemma}\label{lem:mg-3iff}
\begin{align*}
\E\Big|\int_0^T\frac{1}{N}\sigma_0\sum_{i\in\cN(\zeta_t^N)}\nabla \phi(x_i^N(t))\cdot dB_i(t)\Big|^2=O\left(N^{-1}\right).
\end{align*}
\end{lemma}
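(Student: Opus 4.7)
The plan is to apply It\^o isometry directly, exploiting the independence of the Brownian motions $B_i$ across $i$ and the boundedness of $\nabla \phi$, which together reduce the estimate to a simple counting argument.

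First, I would rewrite the stochastic integral as an unconstrained sum over $i = 1, \dots, N$ by inserting the indicator $\mathbf{1}_{\{i \in \cN(\zeta_t^N)\}}$, which is $\cG_t$-adapted since the survival of particle $i$ up to time $t$ is measurable with respect to $\cG_t$. Thus
\[
\int_0^T \frac{\sigma_0}{N} \sum_{i\in\cN(\zeta_t^N)} \nabla\phi(x_i^N(t)) \cdot dB_i(t) = \sum_{i=1}^N \int_0^T \frac{\sigma_0}{N}\, \mathbf{1}_{\{i \in \cN(\zeta_t^N)\}}\, \nabla\phi(x_i^N(t)) \cdot dB_i(t),
\]
and each integrand is a $\cG_t$-adapted, bounded (since $\phi \in C^2_c$) process, so each summand is a genuine square-integrable martingale.

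Next, by It\^o's isometry together with the mutual independence of the Brownian motions $\{B_i\}_{i=1}^N$ (which kills all cross terms in the expansion of the square), I get
\[
\E\Bigl|\,\sum_{i=1}^N \int_0^T \frac{\sigma_0}{N}\, \mathbf{1}_{\{i \in \cN(\zeta_t^N)\}}\, \nabla\phi(x_i^N(t)) \cdot dB_i(t)\Bigr|^2 = \sum_{i=1}^N \E \int_0^T \frac{\sigma_0^2}{N^2}\, \mathbf{1}_{\{i \in \cN(\zeta_t^N)\}}\, |\nabla\phi(x_i^N(t))|^2\, dt.
\]
Finally, bounding $|\nabla\phi|^2 \le \|\nabla\phi\|_\infty^2$ (finite since $\phi \in C^2_c(\R^3)$) and dropping the indicator gives
\[
\le \sum_{i=1}^N \frac{\sigma_0^2 T \|\nabla\phi\|_\infty^2}{N^2} = \frac{\sigma_0^2 T \|\nabla\phi\|_\infty^2}{N} = O(N^{-1}),
\]
as claimed. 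There is no real obstacle here: unlike the $W$-driven martingale of Lemma \ref{lem:free}, the independence of the $B_i$'s eliminates all off-diagonal terms, so no spatial correlation estimate (and hence no appeal to Aronson bounds or Proposition \ref{ppn:gaussian}) is needed, and the trivial counting of at most $N$ diagonal terms against the $1/N^2$ prefactor already yields the $N^{-1}$ decay.
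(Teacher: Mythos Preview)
Your proof is correct and follows essentially the same approach as the paper: apply It\^o isometry, use the independence of the $B_i$'s to kill cross terms, bound $|\nabla\phi|^2$ by its supremum, and count at most $N$ diagonal terms against the $1/N^2$ prefactor. The paper's version is slightly terser (it sums over $i\in\cN(\zeta_t^N)$ directly and bounds $|\cN(\zeta_t^N)|\le N$ rather than introducing the indicator explicitly), but the argument is the same.
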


\begin{proof}
By It\^o isometry, we have 
\begin{align*}
&\E\Big|\int_0^T\frac{1}{N}\sigma_0\sum_{i\in\cN(\zeta_t^N)}\nabla \phi(x_i^N(t))\cdot dB_i(t)\Big|^2\\
&=\E\sum_{i\in\cN(\zeta_t^N)}\int_0^T\frac{1}{N^2}\sigma_0^2 |\nabla \phi(x_i^N(t))|^2dt\\
&\le T\sigma_0^2N^{-2}\|\phi\|_{C^1}^2 \E|\cN(\eta(t))|\le T\sigma_0^2N^{-1}\|\phi\|_{C^1}^2,
\end{align*}
since $|\cN(\eta(t))|\le |\cN(\eta(0))|=N$.
\end{proof}

\medskip
We show next that the martinagle associated with jumps also vanishes in $L^2(\P)$.
\begin{lemma}\label{lem:mg-jump}
$\E|M_J^N(T)|^2=O\left(N^{-1}\right)$.
\end{lemma}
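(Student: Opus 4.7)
The plan is to use the fact that $M_J^N$ is a compensated pure-jump martingale and compute its predictable quadratic variation explicitly, then reduce the resulting expression to a double sum of the kernel $\theta^\ep$ that is controlled by the same free-particle coupling used in Lemma \ref{lem:free}.

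First I would write down $M_J^N$ explicitly. Applying It\^o-Dynkin to $F_1(\zeta)=\frac{1}{N}\sum_{i\in\cN(\zeta)}\phi(x_i)$, the jump increment associated with the disappearance of an ordered pair $(i,j)$ is
\[
F_1(\zeta^{-ij})-F_1(\zeta)=-\frac{1}{N}\big(\phi(x_i)+\phi(x_j)\big),
\]
and each (unordered) pair jumps at rate $r_{ij}^N=\frac{R_0}{N}\theta^\ep(x_i-x_j)$. Standard theory for compensated jump martingales (cf.\ the generator \eqref{gen-2}) gives
\[
\E|M_J^N(T)|^2=\E\langle M_J^N\rangle_T=\E\int_0^T \frac{1}{2}\sum_{\substack{i,j\in\cN(\zeta_t^N)\\ i\neq j}}r_{ij}^N\big(F_1(\zeta_t^{-ij})-F_1(\zeta_t)\big)^2 dt.
\]
The simple bound $(\phi(x_i)+\phi(x_j))^2\le 4\|\phi\|_\infty^2$ then yields
\[
\E|M_J^N(T)|^2\le \frac{2R_0\|\phi\|_\infty^2}{N^3}\,\E\int_0^T\sum_{\substack{i,j\in\cN(\zeta_t^N)\\ i\neq j}}\theta^\ep\big(x_i^N(t)-x_j^N(t)\big)dt.
\]

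Next I would estimate the double sum by coupling with the free system $y_i^\ep$ defined in \eqref{free-sys}. Since each active $x_i^N(t)$ coincides with $y_i^\ep(t)$ under the coupling and adding inactive indices only increases the (nonnegative) sum,
\[
\E\sum_{i\neq j\in\cN(\zeta_t^N)}\theta^\ep\big(x_i^N(t)-x_j^N(t)\big)\le \E\sum_{i\neq j=1}^N \theta^\ep\big(y_i^\ep(t)-y_j^\ep(t)\big)=N(N-1)\,\E\big[\theta^\ep\big(y_1^\ep(t)-y_2^\ep(t)\big)\big],
\]
using exchangeability of the free system. The pair density $f_t^{12,\ep}$ satisfies the Gaussian bound \eqref{2-joint} of Proposition \ref{ppn:gaussian}, so
\[
\E\big[\theta^\ep\big(y_1^\ep(t)-y_2^\ep(t)\big)\big]=\iint \theta^\ep(y_1-y_2)f_t^{12,\ep}(y_1,y_2)\,dy_1dy_2\le C_0\iint \theta^\ep(y_1-y_2)e^{-(|y_1|^2+|y_2|^2)/C_0}dy_1dy_2.
\]
Changing variables to $(z,y_2)=(y_1-y_2,y_2)$, doing the Gaussian integral in $y_2$ and using $\int \theta^\ep=1$ together with compact support of $\theta^\ep$ in $B(0,\ep)$, this quantity is bounded by a finite constant independent of $\ep$.

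Combining these estimates,
\[
\E|M_J^N(T)|^2\le \frac{2R_0\|\phi\|_\infty^2}{N^3}\cdot T\cdot N(N-1)\cdot O(1)=O(N^{-1}),
\]
which is the desired bound. The only non-routine step is the uniform-in-$\ep$ control of $\E[\theta^\ep(y_1^\ep(t)-y_2^\ep(t))]$, which is precisely where Aronson's estimate via Proposition \ref{ppn:gaussian} is needed; once this is in hand, the rest is the standard quadratic-variation computation for a compensated jump martingale combined with the uniform bound $\|F_1(\zeta^{-ij})-F_1(\zeta)\|_\infty=O(N^{-1})$.
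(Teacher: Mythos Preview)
Your proof is correct, but the paper takes a different and more elementary route. After reaching the same bound
\[
\E|M_J^N(T)|^2\le \frac{2R_0\|\phi\|_\infty^2}{N^3}\,\E\int_0^T\sum_{i\neq j\in\cN(\zeta_t^N)}\theta^\ep\big(x_i^N(t)-x_j^N(t)\big)dt,
\]
the paper does \emph{not} invoke the free-particle coupling or Proposition~\ref{ppn:gaussian}. Instead it applies the generator to the functional $F_2(\zeta)=|\cN(\zeta)|$: the diffusion part annihilates $F_2$, and the jump part gives exactly $-\frac{R_0}{N}\sum_{i\neq j}\theta^\ep(x_i-x_j)$, so taking expectations yields
\[
\E\int_0^T\frac{R_0}{N}\sum_{i\neq j\in\cN(\zeta_t^N)}\theta^\ep\big(x_i^N(t)-x_j^N(t)\big)dt=\E|\cN(\zeta_0^N)|-\E|\cN(\zeta_T^N)|\le N.
\]
This is a pure counting identity (the integral is twice the expected number of collisions, which cannot exceed $N/2$), and it gives $\E|M_J^N(T)|^2\le 2\|\phi\|_\infty^2 N^{-1}$ without any analytic input on densities. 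Your approach works equally well here and reuses machinery already in place, but the paper's argument is model-robust: it does not require Aronson-type bounds, uniform ellipticity, or even that the particles diffuse at all.
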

\begin{proof}
By the carr\'e-du-champ formula cf. \cite[Proposition 8.7]{DN}, we have
\begin{align*}
\E|M_J^N(T)|^2&\le \E\int_0^T\frac{R_0}{2N}\sum_{i\in\cN(\zeta_t^N)}\sum_{j\in\cN(\zeta_t^N):j\neq i}\theta^\ep(x_i^N(t)-x_j^N(t))\frac{1}{N^2}\big[\phi(x_i^N(t))+\phi(x_j^N(t))\big]^2dt \\
&\le \E\int_0^T\frac{2R_0\|\phi\|_{L^\infty}^2}{N^3}\sum_{i\in\cN(\zeta_t^N)}\sum_{j\in\cN(\zeta_t^N):j\neq i}\theta^\ep(x_i^N(t)-x_j^N(t))dt.
\end{align*}
On the other hand, we consider the functional $F_2(\zeta_t^N)=|\cN(\zeta_t^N)|$, apply the generator \eqref{generator} to it, and we get 
\begin{align*}
\E|\cN(\eta_T^N)|-\E|\cN(\eta_0^N)|= -\E\, 2\int_0^T\frac{R_0}{2N}\sum_{i\in\cN(\zeta_t^N)}\sum_{j\in\cN(\zeta_t^N):j\neq i}\theta^\ep(x_i^N(t)-x_j^N(t))dt.
\end{align*}
Indeed, the diffusion part of the generator does not affect $F_2(\zeta_t^N)$, only the jump part does. It yields that 
\begin{align*}
\E\int_0^T\frac{R_0}{N}\sum_{i\in\cN(\zeta_t^N)}\sum_{j\in\cN(\zeta_t^N):j\neq i}\theta^\ep(x_i^N(t)-x_j^N(t))dt\le \E|\cN(\eta_0)|=N.
\end{align*}
Therefore we have $\E|M_J^N(T)|^2\le 2\|\phi\|_{L^\infty}^2N^{-1}$.
\end{proof}

\section{The cell problem and the nonlinear term}\label{sec:cell}
The remaining task is to prove the convergence of the nonlinear term \eqref{id-emp}. We aim to establish a form of local equilibrium as in Proposition \ref{ppn:stoss} below. For this purpose, we introduce for every fixed $\ep\in(0,1)$ an auxiliary equation $u^{\ep}(x):\R^3\to\R$ (so-called cell problem in the terminology of homogenization):
\begin{align}\label{cell}
\sigma_0^2\Delta u^\ep (x)+\sigma^{2}\nabla\cdot\Big(\omega\big(\frac{\xi x}{\ep}\big)  \nabla u^{\ep}\left(  x\right)\Big)  =R_{0}%
\theta^\ep\left(  x\right)  \Big(  1+\frac{u^{\ep}\left(  x\right) }{N} \Big) ,
\end{align}
which is a rescaling  of an $\ep$-independent equation \eqref{cell-unscaled}. Let $u(x):\R^3\to\R$ be a particular $\mathsf C^{2, \alpha}(\R^3)$-solution given in \eqref{exp-sol}, of 
\begin{align}\label{cell-unscaled}
\sigma_0^2\Delta u(x)+\sigma^{2}\nabla\cdot\big(\omega(\xi x)\nabla u(x)\big)  =R_{0}%
\theta(x)\big(  1+u(x) \big),
\end{align}
then it can be readily checked, using $N=\ep^{-1}$, that the rescaled function
\begin{align}\label{u-rescaling}
u^\ep(x)=\ep^{-1}u\left(\frac{x}{\ep}\right)
\end{align}
solves \eqref{cell}. This is the analogous cell equation as used in \cite[Eq. (1.7)]{hr}.

The following proposition will be proved in Section \ref{sec:tanaka} using the It\^o-Tanaka trick. Note that we have added an extra test function $\psi$, with respect to  \eqref{id-emp}, to have more localization. It is actually without loss of generality. Indeed, $\theta^\ep(x)$ is  compactly supported in $B(0,\ep)$ and from \eqref{cell} it can be seen \eqref{cell-separate} that $\Delta u^\ep (x)$ is compactly supported in $B(0,\ep(1\vee\xi^{-1}))$, and we can always assume that $|z|\le 1/2$, which implies that each summand 
\[
R_0\theta^\ep\big(x_i^N(t)-x_j^N(t)\big)-\left(\sigma_0^2+\sigma^2\right)\Delta u^\ep\big(x_i^N(t)-x_j^N(t)+z\big)
\]
is zero unless $|x_i^N(t)-x_j^N(t)|\le 1/2+\ep(1\vee\xi)$. Since $x_i^N(t)\in \text{supp}(\phi)$, this forces $x_j^N(t)$ to be inside some other compact set $\K=\K(\phi,\xi)\subset\R^3$. Hence we can choose a compactly supported test function $\psi$ that is identically $1$ on $\K$ and decays smoothly to $0$ outside. Such a $\psi(x_j^N(t))$ does not alter \eqref{stoss}.
\begin{proposition}\label{ppn:stoss}
Let $u^{\ep}(x)\in\mathsf C^{2, \alpha}(\R^3)$ be given by \eqref{u-rescaling}  and $\phi, \psi\in C_c^2(\R^3)$ be two test functions, then we have
\begin{align}\label{stoss}
&\lim_{|z|\to0}\limsup_{N\to\infty}\nonumber\\
&\E\Bigg|\int_0^T\frac{1}{N^2}\sum_{i, j\in\cN(\zeta_t^N): j\neq i}\Big[R_0\theta^\ep\big(x_i^N(t)-x_j^N(t)\big)-\left(\sigma_0^2+\sigma^2\right)\Delta u^\ep\big(x_i^N(t)-x_j^N(t)+z\big)\Big]\phi(x_i^N(t))\psi(x_j^N(t))dt\Bigg|=0.
\end{align}
\end{proposition}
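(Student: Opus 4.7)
The plan is to use the It\^o-Tanaka trick (an It\^o-Dynkin identity combined with the cell equation) applied to the regularized double-sum functional
\[
F_3(\zeta_t^N) := \frac{1}{N^2}\sum_{\substack{i,j\in\cN(\zeta_t^N)\\ i\neq j}} u^\ep\big(x_i^N(t) - x_j^N(t) + z\big)\,\phi(x_i^N(t))\,\psi(x_j^N(t)).
\]
Writing $y := x_i^N(t) - x_j^N(t)$ and $\tilde y := y + z$, this functional is a smoothed counterpart of the nonlinear quantity in the proposition, with the $\delta$-like $\theta^\ep$ replaced by the much more regular $u^\ep$ (shifted by $z$). First I would apply the It\^o-Dynkin formula over $[0,T]$ with the generator $\mathcal{L}^N$ from \eqref{generator} to get
\[
F_3(\zeta_T^N) - F_3(\zeta_0^N) = \int_0^T \mathcal{L}^N F_3(\zeta_s^N)\,ds + M^N(T),
\]
where $M^N$ collects the diffusion and jump martingales.

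Since $u^\ep(\tilde y)$ depends only on the pair $(x_i,x_j)$, the action of the diffusion part of $\mathcal{L}^N$ on the $u^\ep$-factor alone reduces to the two-point motion and, because $C$ is divergence-free in its first index, equals $(\sigma_0^2+\sigma^2)\Delta u^\ep(\tilde y) - \sigma^2\,\mathrm{Tr}\big[C_\ep(\xi y)\nabla^2 u^\ep(\tilde y)\big]$. Evaluating the cell equation \eqref{cell} at $\tilde y$ then replaces this by $R_0\theta^\ep(\tilde y) + R_0\theta^\ep(\tilde y)\,u^\ep(\tilde y)/N + \sigma^2\,\mathrm{Tr}\big\{[C_\ep(\xi\tilde y) - C_\ep(\xi y)]\nabla^2 u^\ep(\tilde y)\big\}$. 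The full expansion of $\mathcal{L}^N F_3$ also contains cross terms coupling $\nabla u^\ep(\tilde y)$ to $\nabla\phi,\nabla\psi$, outer terms in which all derivatives fall on $\phi(x_i)\psi(x_j)$ (with $u^\ep(\tilde y)$ playing the role of a coefficient), and the jump part $\mathcal{L}^N_J F_3$, which itself splits into a \emph{diagonal} contribution from the removal of the pair $(i,j)$ itself in a coalescence event and an \emph{off-diagonal} three-body contribution from coalescence of one endpoint with a third particle.

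Rearranging the identity and adding/subtracting $R_0\theta^\ep(y)\phi(x_i)\psi(x_j)$ isolates the quantity inside the absolute value in \eqref{stoss}, expressing it as a sum of ``minor'' error terms: (a) the boundary increment $F_3(\zeta_T^N) - F_3(\zeta_0^N)$; (b) the martingale $M^N(T)$; (c) the diffusion cross and outer terms; (d) the jump contribution; (e) the shift $R_0[\theta^\ep(y) - \theta^\ep(\tilde y)]$; (f) the $1/N$-correction $R_0\theta^\ep(\tilde y)u^\ep(\tilde y)/N$; (g) the $C$-mismatch $\sigma^2\,\mathrm{Tr}\{[C_\ep(\xi\tilde y)-C_\ep(\xi y)]\nabla^2 u^\ep(\tilde y)\}$. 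For each $\mathcal{E}^N_k(z)$ I would show $\limsup_{N\to\infty}\E|\mathcal{E}^N_k(z)| \to 0$ as $|z|\to 0$, using: the Aronson-type Gaussian bounds on the auxiliary free pair and three-body densities from Proposition \ref{ppn:gaussian} combined with the coupling to the free system \eqref{free-sys}; Green-function-type bounds $|u^\ep(x)|\lesssim (|x|\vee\ep)^{-1}$ and $|\nabla u^\ep(x)|\lesssim (|x|\vee\ep)^{-2}$ obtained by rescaling the bounded cell solution $u$; It\^o isometry and carr\'e-du-champ for (b), in the style of Lemmas \ref{lem:free}--\ref{lem:mg-jump}; and the H\"older continuity of $\theta$ together with the smoothness of $C$ for (e) and (g).

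The main obstacle is that several of these errors --- notably the $1/N$-correction (f) and the diagonal part of the jump (d) --- are only $O(1)$ in $N$ individually and must be paired before being estimated. Their sum collapses to a ``shift of $\theta^\ep$'' type quantity of the form $\frac{R_0}{N^3}\sum_{k\neq l} u^\ep(\tilde y)\big[\theta^\ep(\tilde y) - \theta^\ep(y)\big]\phi(x_k)\psi(x_l)$, which after exchangeability and the Aronson bound is controlled by the modulus of continuity of $\theta$ and thus vanishes as $|z|\to 0$; an analogous pairing handles (g) against the off-diagonal jump. Making these cancellations precise, and bounding the remaining cross and outer diffusion terms via the Green-function decay of $u^\ep$ and $\nabla u^\ep$, is the technical heart of Section \ref{sec:neg}.
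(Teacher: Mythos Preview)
Your overall architecture (It\^o--Dynkin applied to a $u^\ep$-based functional, substitution via the cell equation, then term-by-term control with the free system and Aronson bounds) matches the paper. But there is a genuine gap in the choice of functional: you apply the trick to $\frac{1}{N^2}\sum_{i\neq j} u^\ep(x_i-x_j+z)\phi\psi$, whereas the paper uses the \emph{difference}
\[
v^{\ep,z}(x):=u^\ep(x+z)-u^\ep(x),\qquad F_3(\zeta_t^N)=\frac{1}{N^2}\sum_{i\neq j} v^{\ep,z}(x_i-x_j)\phi(x_i)\psi(x_j).
\]
This is not cosmetic; without the subtracted $u^\ep(y)$ several of your ``minor'' errors are $O(1)$ and do not vanish under $\limsup_{N\to\infty}$ followed by $|z|\to 0$.

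Concretely: (i) your boundary term $\E|F_3(\zeta_0^N)|$ is bounded by $\int_\K |u^\ep(w)|\,dw\asymp 1$ (since $|u^\ep|\lesssim(|\cdot|\vee\ep)^{-1}$ is only $L^1_{\mathrm{loc}}$-bounded, not small), and the same holds for the cross terms with $\nabla u^\ep(\tilde y)$; (ii) your shift error (e), $R_0[\theta^\ep(y)-\theta^\ep(\tilde y)]$, cannot be controlled by the H\"older modulus of $\theta$ because the rescaled shift $z/\ep$ in the argument of $\theta$ diverges --- Aronson only gives boundedness of $f_t^{12,\ep}$, not equicontinuity, so after the free-system bound you are left with two separate $O(1)$ integrals; (iii) in your pairing of (f) with the diagonal jump, the piece $\frac{R_0}{N^3}\sum\theta^\ep(\tilde y)u^\ep(\tilde y)\phi\psi$ survives: on $\{|\tilde y|\le\ep\}$ one only has $|u^\ep|\le N$, giving $\frac{1}{N^2}\sum\theta^\ep(\tilde y)\phi\psi\asymp 1$.

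The paper avoids all of this by the $v^{\ep,z}$ choice. The $-u^\ep(y)$ piece lets one apply the cell equation at the \emph{unshifted} point $y$, so $R_0\theta^\ep(y)$ appears directly (no error (e)); the $u^\ep(y+z)$ piece contributes $(\sigma_0^2+\sigma^2)\Delta u^\ep(\tilde y)$ untouched. The diagonal jump then contains $\theta^\ep(y)v^{\ep,z}(y)=\theta^\ep(y)[u^\ep(\tilde y)-u^\ep(y)]$, whose $-\theta^\ep(y)u^\ep(y)/N$ part cancels \emph{exactly} against the $u^\ep/N$ correction from the cell equation, leaving only $\theta^\ep(y)u^\ep(\tilde y)/N$, which is $O(\ep|z|^{-1})$. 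Finally, all remaining errors involve $v^{\ep,z}$ or $\nabla v^{\ep,z}$, for which the difference estimates $|v^{\ep,z}(y)|\lesssim|z||y|^{-2}$ and $|\nabla v^{\ep,z}(y)|\lesssim|z||y|^{-3}$ (for $|y|\ge 2|z|+2\ep$) provide the smallness in $|z|$ that your undifferenced $u^\ep,\nabla u^\ep$ lack.
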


We proceed directly to showing that Proposition \ref{ppn:stoss} yields the convergence of the nonlinear term, as in \cite[page 42-43]{hr}. Firstly, we note that we can add the terms with indices $i=j$ into the double summation of \eqref{stoss}, without changing its conclusion. Indeed, there are at most $N$ diagonal terms. With $\theta^\ep(0)=0$, $|\Delta u^\ep(z)|\lesssim |z|^{-3}$ provided $|z|\ge 2\ep$ by \eqref{unif-hess-bd}, and a factor of $1/N^2$ in front, we see that as $\ep\to0$ first (and $z$ fixed), the whole contribution of 
\[
\frac{1}{N^2}\sum_{i\in\cN(\zeta_t^N)}\Big|R_0\theta^\ep\big(x_i^N(t)-x_i^N(t)\big)-\left(\sigma_0^2+\sigma^2\right)\Delta u^\ep\big(x_i^N(t)-x_i^N(t)+z\big)\Big|\lesssim N^{-1}|z|^{-3} \to 0.
\]
Hence, we can start our subsequent argument with a version of \eqref{stoss} with full double summation:
\begin{align}\label{stoss-bis}
&\lim_{|z|\to0}\limsup_{N\to\infty}\nonumber\\
&\E\Big|\int_0^T\frac{1}{N^2}\sum_{i,j \in\cN(\zeta_t^N)}\Big[R_0\theta^\ep\big(x_i^N(t)-x_j^N(t)\big)-\left(\sigma_0^2+\sigma^2\right)\Delta u^\ep\big(x_i^N(t)-x_j^N(t)+z\big)\Big]\phi(x_i^N(t))\psi(x_j^N(t))dt\Big|=0.
\end{align}
Denote by $\zeta:\R^3\to\R_+$ a fixed smooth probability density function with compact support in $B(0,1)$ and denote $\zeta^\delta(x):=\delta^{-3}\zeta(x/\delta)$ for $\delta>0$.  Then, \eqref{stoss-bis} yields that 
\begin{align}\label{averaging}
&\int_0^TR_0\left\langle \theta^\ep(x_1-x_2)\phi(x_1)\psi(x_2), \mu_t^N(dx_1)\mu_t^N(x_2)\right\rangle dt \nonumber\\
 &=\int_0^T\left(\sigma_0^2+\sigma^2\right)\iint_{(\R^3)^2}\left\langle \Delta u^\ep(x_1-x_2-z_1+z_2)\phi(x_1)\psi(x_2), \mu_t^N(dx_1)\mu_t^N(dx_2)\right\rangle \zeta^\delta(z_1)\zeta^\delta(z_2)dz_1dz_2 dt \nonumber\\ 
 &\quad\quad\quad\quad+\text{Err}(\delta,\ep),
\end{align}
where $\text{Err}(\delta,\ep)$ is a stochastic error that vanishes in the following limit: 
\begin{align}\label{error}
\lim_{\delta\to0}\limsup_{\ep\to0}\E|\text{Err}(\delta,\ep)|=0, 
\end{align}
and  may change from line to line in the sequel. In \eqref{averaging}, we have introduced two averaging on variable $z_1,z_2$ with  compact supports in $B(0,\delta)$, hence in view of \eqref{stoss-bis}, the rate of convergence of the error term \eqref{error} is uniformly controlled by $\delta$.

Next, we shift the argument of $\phi(x_1)$ and $\psi(x_2)$ on the right-hand side of \eqref{averaging} to $\phi(x_1-z_1)$ and $\psi(x_2-z_2)$ respectively, causing an error of $O(\delta)$, since $z_1,z_2$ are in the compact support of the function $\zeta^\delta$. Then, we perform a change of variable $w_1=x_1-z_1$, $w_2=x_2-z_2$, and the right-hand side of \eqref{averaging} (in the sequel written without the time integral, to ease notation) now becomes
\begin{align}\label{after-shift-1}
&\left(\sigma_0^2+\sigma^2\right)\iint_{(\R^3)^2}\Delta u^\ep(w_1-w_2)\phi(w_1)\psi(w_2)\left\langle\zeta^\delta(x_1-w_1)\zeta^\delta(x_2-w_2) , \mu_t^N(dx_1)\mu_t^N(x_2)\right\rangle dw_1dw_2+\text{Err}(\delta,\ep).
\end{align}
Next, we shift the argument of $\zeta^\delta(x_2-w_2)$ to $\zeta^\delta(x_2-w_1)$ in \eqref{after-shift-1},  causing an error on the order of $|w_2-w_1|\delta^{-4}$ by the mean-value theorem and $|\nabla\zeta^\delta|\lesssim \delta^{-4}$, and the argument of $\psi(w_2)$ to $\psi(w_1)$, causing an error on the order of $|w_2-w_1|$, and \eqref{after-shift-1} is equal to
\begin{align}
&=\left(\sigma_0^2+\sigma^2\right)\iint_{(\R^3)^2} \Delta u^\ep(w_1-w_2)\phi(w_1)\psi(w_1)\left\langle\zeta^\delta(x_1-w_1)\zeta^\delta(x_2-w_1), \mu_t^N(dx_1)\mu_t^N(x_2)\right\rangle dw_1dw_2\label{interm-2}\\
&\quad\quad\quad +\text{Err}(\delta,\ep)+\text{Err}_1(\delta,\ep),\nonumber
\end{align}
with an additional error term $\text{Err}_1$ that  we need to show is also negligible. Indeed, recalling that $\mu_t^N$ is a sub-probability, we firstly have 
\begin{align}
&|\text{Err}_1(\delta,\ep)|\lesssim \left(\sigma_0^2+\sigma^2\right)\iint_{(\R^3)^2}| \Delta u^\ep(w_1-w_2)||\phi(w_1)||\psi(w_2)||w_1-w_2|\delta^{-7}dw_1dw_2 \nonumber\\
&\quad\quad\quad  + \left(\sigma_0^2+\sigma^2\right)\iint_{(\R^3)^2} |\Delta u^\ep(w_1-w_2)||\phi(w_1)||w_1-w_2|dw_1dw_2. \label{interm-3}
\end{align}
Then, note  by the cell equation \eqref{cell} and since $\omega(x)=I_3-\ovl{C}(x)$, we have  
\begin{align}\label{cell-separate}
\left(\sigma_0^2+\sigma^2\right)\Delta u^\ep(x) = \sigma^{2}\nabla\cdot\Big(C _\ep(\xi x)  \nabla u^{\ep}\left(  x\right)\Big)  +R_{0}%
\theta^\ep\left(  x\right)  \Big(  1+\frac{u^{\ep}\left(  x\right) }{N} \Big).
\end{align}
Since $C _\ep$ and $\theta^\ep$ are compactly supported in $B(0,\ep/\xi)$ and $B(0,\ep)$ respectively, we have $\Delta u^\ep$ compactly supported in $B(0,\ep(1\vee\xi^{-1}))$.  Hence, recalling that $\Delta u^\ep(x) = \ep^{-3}\Delta u(x/\ep)$,   \eqref{interm-3} can be bounded by
\begin{align*}
|\text{Err}_1(\delta,\ep)|&\lesssim\iint_{(\R^3)^2}\big|\ep^{-3} \Delta u\big(\frac{w_1}{\ep}\big)\big||\phi(w_1+w_2)||\psi(w_2)||w_1|\delta^{-7}dw_1dw_2 \\
&\lesssim \ep \iint_{(\R^3)^2}\big|\ep^{-3} \Delta u\big(\frac{w_1}{\ep}\big)\big||\psi(w_2)|\delta^{-7}dw_1dw_2\\
&\lesssim\ep \int_{\R^3}|\Delta u(w_1)|\delta^{-7}dw_1=O(\ep\delta^{-7}),
\end{align*}
where since $\Delta u\in \mathsf C^{\alpha}(\R^3)$ and compactly supported in $B(0, 1\vee\xi^{-1})$, it is integrable. Thus, we see that $$\lim_{\delta\to0}\limsup_{\ep\to0}|\text{Err}_1(\delta,\ep)|=0.$$

Using the coupling with the auxiliary free system \eqref{free-sys}, see \cite[Sections 5-6]{fh-alea} or \cite[Section 6]{h-ejp} for arguments in similar contexts, it can be shown that the sequence of laws of $\{\mu_t^N\}_N$ is weakly compact, and any subsequential limit of these laws is concentrated on those measure-valued processes that are absolutely continuous  with respect to Lebesgue measure for every $t\ge0$, with a density that is uniformly bounded by the right-hand side of \eqref{2-joint} with $\ell=1$ (i.e. by the $1$-particle density of the free particle system). Let $\{\mu_t^{N_k}\}_k$ be such a converging subsequence, and let the limit density be called $f(t,x)$. Then for fixed $\delta$ we have as $N_k\to\infty$ (and $\ep(N_k)\to0$),
\[
\left\langle\zeta^\delta(x_1-w_1)\zeta^\delta(x_2-w_1), \mu_t^N(dx_1)\mu_t^N(x_2)\right\rangle\to \iint_{(\R^3)^2} \zeta^\delta(x_1-w_1)\zeta^\delta(x_2-w_1)f(t, x_1)f(t, x_2)dx_1dx_2.
\]
Recall the definition of $\ovl\cR$ \eqref{corr}. Now, taking $\ep\to0$ and then $\delta\to0$ in \eqref{interm-2},  the error terms  vanish in $L^1(\P)$ and the main integral tends first to
\begin{align*}
\ovl\cR \int_{\R^{d}} \phi(w_1)\psi(w_1)\iint_{(\R^3)^2} \zeta^\delta(x_1-w_1)\zeta^\delta(x_2-w_1)f(t, x_1)f(t, x_2)dx_1dx_2dw_1,
\end{align*}
and then to
\begin{align}\label{weak-form-nonlin}
\ovl\cR \int_{\R^3} \phi(w_1)\psi(w_1)f(t, w_1)f(t,w_1)dw_1,
\end{align}
since $\zeta^\delta$ approximates the delta-Dirac (here one needs the apriori fact that $f$ is bounded uniformly by a non-random constant, hence one can apply the dominated convergence theorem). 
In \eqref{weak-form-nonlin} we have obtained the desired weak formulation (in space) of the nonlinearity of \eqref{limit-pde}. 

In Section \ref{sec:emp} we already saw that in the identity satisfied by the empirical measure, the linear terms converge easily to the desired limiting linear terms (since they are in the form of a measure integrated against a test function) and the martingale terms vanish in $L^2(\P)$; the only issue is the convergence of the nonlinear term \eqref{id-emp}. In this section, we saw that provided Proposition \ref{ppn:stoss} is proved, we can also prove the convergence of the nonlinear term to the desired limiting nonlinear term, along every weakly converging subsequence of $\{\mu_t^N\}_{t\in[0,T]}$. Thus a quite standard weak convergence argument as in e.g.  \cite[page 636-637]{fh} or \cite[page 26-27]{h-ejp} establishes the subsequential convergence in distribution of $\{\mu_t^N\}_{t\in[0,T]}$ to a weak solution of \eqref{limit-pde}. Since it can be proved (see e.g. \cite[Section 3]{fh}) that weak solutions to the \abbr{PDE} in the sense of Definition \ref{def:weak} is unique hence deterministic, we conclude that the whole sequence $\{\mu_t^N\}_{t\in[0,T]}$ converges in probability. This is the proof of Theorem \ref{thm:main} (provided Proposition \ref{ppn:stoss} is shown).

\section{It\^o-Tanaka procedure}\label{sec:tanaka}
In order to prove the key Proposition \ref{ppn:stoss}, we need to introduce a particular functional on the particle configuration that contains the macroscopic variable $z$, and by applying the generator to such a functional, we wish to make appear the pre-limit in \eqref{stoss}. It is clear only a posteriori that it should contain an auxiliary function $v^{\ep,z}(x)$ \eqref{def-v} that is in the form of a difference of the function $u^\ep(x)$, shifted by $z$.

For every fixed $\ep\in(0,1)$ and $z\in\R^3$ with $|z|\le 1/2$, denote
\begin{align}\label{def-v}
v^{\ep,z}(x):=u^{\ep}(x+z)-u^{\ep}(x),
\end{align}
where $u^{\ep}(x)$ is a $\mathsf C^{2, \alpha}(\R^3)$-solution of the $\ep$-dependent cell problem  \eqref{cell}, given in \eqref{u-rescaling}. In particular, it is regular enough to apply It\^o's formula to. Consider the following functional on configurations
\begin{align}\label{functional-3}
F_3(\zeta_t^N):=\frac{1}{N^2}\sum_{i\in\cN(\zeta_t^N)}\sum_{j\in\cN(\zeta_t^N): j\neq i}v^{\ep,z}(x_i^N(t)-x_j^N(t))\phi(x_i^N(t))\psi(x_j^N(t)), \quad t\ge0,
\end{align}
and we proceed to apply the It\^o-Dynkin formula (with infinitesimal generator given in \eqref{generator}) to it. We note the following property of the function $C(x)$:
\begin{align}\label{equiv-3iv}
\sum_{\alpha=1}^3\partial_\alpha C^{\alpha\beta}(x) =0, \quad \text{for all }\beta=1,2,3, \;\; x\in\R^3.
\end{align}
Indeed, by \eqref{covariance} $C(x)=\sum_{k\in  \N}\lambda_ke_k(x)\otimes \lambda_ke_k(0)$.  Since $e_k(x)$ is divergence free, for every $k\in  \N$, we have 
\begin{align*}
\sum_{\alpha=1}^3\partial_\alpha C^{\alpha\beta}(x)=\sum_{k\in  \N}\lambda_k^2\sum_{\alpha=1}^3\partial_\alpha \left((e_k)^\alpha(x)(e_k)^\beta(0)\right)&=\sum_{k\in  \N}\lambda_k^2\sum_{\alpha=1}^3\left(\partial_\alpha (e_k)^\alpha(x)\right)(e_k)^\beta(0)\\
&=\sum_{k\in \N}\lambda_k^2{\rm{div}}\left(e_k\right) (e_k)^\beta(0)=0.
\end{align*}
The property \eqref{equiv-3iv} effectively implies that all the divergence forms in the expansion \eqref{long-expansion} below, are also equivalently non-divergence forms. 

\begin{remark}
The formula below is long, so let us first explain how one obtains it. Consider the first part of the diffusion generator $\cL^N_D$ \eqref{gen-1} (i.e. the part with covariance in it; the second part being more classical). It is in the form of a double sum, hence by linearity one can consider the action of each ``sub-generator'' $\frac{1}{2}\sigma^2\nabla_{x_{i_0}}\cdot \big(C_\ep(\xi(x_{i_0}-x_{j_0}))\nabla_{x_{j_0}}F(\eta)\big)$, for each fixed pair of indices $(i_0,j_0)$. Note here that $i_0,j_0$ may not be distinct. One acts this ``sub-generator'' on our functional \eqref{functional-3}. Now, only those terms in \eqref{functional-3} that contain either index $i_0$ or $j_0$ (or both) are affected by the action of this ``sub-generator''. As we will exhaust all choices of $i_0, j_0$, for clarity we prefer to separate the case when $i_0=j_0$ from $i_0\neq j_0$. By the property $C_\ep(0)=I_3$, in the case $i_0=j_0$ we do not see the function $C_\ep$ appear since we have $I_3$. This is how we obtained the first 4 terms on the right-hand side of the explansion \eqref{long-expansion} below. 

Similarly, the jump generator \eqref{gen-2} is also a double sum. By linearity, one can consider the action of each ``sub-generator'' $\frac{1}{2}r_{i_0j_0}^N\left[F(\eta^{-i_0j_0})-F(\eta)\right]$, for each fixed pair of indices $(i_0,j_0)$ (here they are necessarily distinct). As one acts this ``sub-generator'' on our functional \eqref{functional-3}, only those terms that contain either index $i_0$ or $j_0$ (or both) are affected. Here, the effect of the action is that some terms of \eqref{functional-3} are removed.  A moment's thought convinces oneself that triple sums may appear, as in the 6th term on the right-hand side of \eqref{long-expansion}.
\end{remark}
We get
\begin{align}\label{long-expansion}
&F_3(\eta_T^N)-F_3(\eta_0^N)\nonumber\\
&=\frac{1}{N^2}\int_0^T\sum_{i\in\cN(\zeta_t^N)}\sum_{j\in\cN(\zeta_t^N): j\neq i}\frac{\sigma^2}{2}\Big[\nabla_{x_i}\cdot\big(C_\ep(\xi(x_i^N(t)-x_j^N(t)))\nabla_{x_j}\big( v^{\ep,z}(x_i^N(t)-x_j^N(t))\phi(x_i^N(t))\psi(x_j^N(t))\big)\big)\nonumber\\
&\quad\quad\quad \quad +\nabla_{x_i}\cdot\big(C_\ep(\xi(x_i^N(t)-x_j^N(t)))\nabla_{x_j}\big( v^{\ep,z}(x_j^N(t)-x_i^N(t))\phi(x^N_j(t))\psi(x_i^N(t))\big)\big)\Big]dt\nonumber\\
&+\frac{1}{N^2}\int_0^T\sum_{i\in\cN(\zeta_t^N)}\sum_{j\in\cN(\zeta_t^N): j\neq i}\frac{\sigma_0^2+\sigma^2}{2}\Big[\Delta  v^{\ep,z}(x_i^N(t)-x_j^N(t))\phi(x_i^N(t))\psi(x_j^N(t)) \nonumber\\
&\quad\quad\quad \quad +\Delta  v^{\ep,z}(x_j^N(t)-x_i^N(t))\phi(x^N_j(t))\psi(x_i^N(t))\Big]dt\nonumber\\
&+\frac{1}{N^2}\int_0^T\sum_{i\in\cN(\zeta_t^N)}\sum_{j\in\cN(\zeta_t^N): j\neq i}\frac{\sigma_0^2+\sigma^2}{2}\Big[v^{\ep,z}(x_i^N(t)-x_j^N(t))\Delta \phi(x_i^N(t))\psi(x_j^N(t))\nonumber\\ 
&\quad\quad\quad\quad +v^{\ep,z}(x_j^N(t)-x_i^N(t))\phi(x_j^N(t))\Delta \psi(x_i^N(t))\Big]dt\nonumber\\
&+\frac{1}{N^2}\int_0^T\sum_{i\in\cN(\zeta_t^N)}\sum_{j\in\cN(\zeta_t^N): j\neq i}\left(\sigma_0^2+\sigma^2\right)\Big[\nabla v^{\ep,z}(x_i^N(t)-x_j^N(t))\cdot\nabla \phi(x_i^N(t))\psi(x_j^N(t))\nonumber\\
&\quad\quad\quad\quad-\nabla v^{\ep,z}(x_j^N(t)-x_i^N(t))\cdot\phi(x_j^N(t))\nabla \psi(x_i^N(t))\Big]dt\nonumber\\
&-\int_0^T\sum_{i\in\cN(\zeta_t^N)}\sum_{j\in\cN(\zeta_t^N): j\neq i}\frac{R_0}{2N}\theta^\ep(x_i^N(t)-x_j^N(t))\nonumber\\
&\quad\quad\quad \cdot  \frac{1}{N^2}\Big[v^{\ep,z}(x_i^N(t)-x_j^N(t))\phi(x_i^N(t))\psi(x_j^N(t))+v^{\ep,z}(x_j^N(t)-x_i^N(t))\phi(x^N_j(t))\psi(x_i^N(t))\Big]dt\nonumber\\
&-\int_0^T\sum_{i\in\cN(\zeta_t^N)}\sum_{j\in\cN(\zeta_t^N): j\neq i}\frac{R_0}{2N}\theta^\ep(x_i^N(t)-x_j^N(t))\nonumber\\
&\quad\quad\quad \cdot \frac{1}{N^2}\sum_{k\in\cN(\zeta_t^N):k\neq i,j}\Big[v^{\ep,z}(x_i^N(t)-x_k^N(t))\phi(x_i^N(t))\psi(x_k^N(t))+v^{\ep,z}(x_k^N(t)-x_i^N(t))\phi(x_k^N(t))\psi(x_i^N(t))\nonumber\\
&\quad\quad\quad\quad   +v^{\ep,z}(x_j^N(t)-x_k^N(t))\phi(x_j^N(t))\psi(x_k^N(t))+v^{\ep,z}(x_k^N(t)-x_j^N(t))\phi(x_k^N(t))\psi(x_j^N(t))\Big]dt\nonumber\\
&+\wt M_D^N(T)+\wt M_J^N(T),\nonumber\\
\end{align}
where $\wt M_D^N(t)$ and $\wt M_J^N(t)$ are two martingales associated with diffusion and jumps respectively. 

With a tedious but straightforward computation (details omitted), we can expand and rewrite the 1st term on the right-hand side of \eqref{long-expansion} further:
\begin{align*}
&\frac{1}{N^2}\int_0^T\sum_{i\in\cN(\zeta_t^N)}\sum_{j\in\cN(\zeta_t^N): j\neq i}\frac{\sigma^2}{2}\big[\nabla_{x_i}\cdot\big(C_\ep(\xi(x_i^N(t)-x_j^N(t)))\nabla_{x_j}\big( v^{\ep,z}(x_i^N(t)-x_j^N(t))\phi(x_i^N(t))\psi(x_j^N(t))\big)\big)\nonumber\\
&\quad\quad\quad \quad +\nabla_{x_i}\cdot\big(C_\ep(\xi(x_i^N(t)-x_j^N(t)))\nabla_{x_j}\big( v^{\ep,z}(x_j^N(t)-x_i^N(t))\phi(x^N_j(t))\psi(x_i^N(t))\big)\big)\big]dt\\
&=\frac{1}{N^2}\int_0^T\sum_{i\in\cN(\zeta_t^N)}\sum_{j\in\cN(\zeta_t^N): j\neq i}\sigma^2\big[-\nabla_{x_i}\cdot\big(C _\ep(\xi(x_i^N(t)-x_j^N(t)))\nabla v^{\ep,z}(x_i^N(t)-x_j^N(t))\big)\phi(x_i^N(t))\psi(x_j^N(t))\\
&\quad\quad\quad\quad- \nabla \phi(x_i^N(t))^T C_\ep(\xi(x_i^N(t)-x_j^N(t)))\nabla v^{\ep,z}(x_i^N(t)-x_j^N(t))\psi(x_j^N(t))\\
&\quad\quad\quad\quad+\nabla v^{\ep,z}(x_i^N(t)-x_j^N(t))^T C_\ep(\xi(x_i^N(t)-x_j^N(t)))\nabla \psi(x_j^N(t))\phi(x_i^N(t))\\
&\quad\quad\quad\quad+ \nabla \phi(x_i^N(t))^T C_\ep(\xi(x_i^N(t)-x_j^N(t)))\nabla\psi(x_j^N(t))v^{\ep,z}(x_i^N(t)-x_j^N(t))\big]dt,
\end{align*}
in which we have used $C_\ep(-x)=C_\ep(x)^T$ and \eqref{equiv-3iv}.

By the symmetry of $\theta$ and the preceding computation, we get the following simplified expansion from the action of generator to $F_3(\zeta_t^N)$:
\begin{align}
&F_3(\eta_T^N)-F_3(\eta_0^N) \label{ini}\\
&=-\frac{\sigma^2}{N^2}\int_0^T\sum_{i\in\cN(\zeta_t^N)}\sum_{j\in\cN(\zeta_t^N): j\neq i}\nabla_{x_i}\cdot\big(C _\ep(\xi(x_i^N(t)-x_j^N(t)))\nabla v^{\ep,z}(x_i^N(t)-x_j^N(t))\big)\phi(x_i^N(t))\psi(x_j^N(t))dt\label{off-1}\\
&-\frac{\sigma^2}{N^2}\int_0^T\sum_{i\in\cN(\zeta_t^N)}\sum_{j\in\cN(\zeta_t^N): j\neq i} \nabla \phi(x_i^N(t))^T C_\ep(\xi(x_i^N(t)-x_j^N(t)))\nabla v^{\ep,z}(x_i^N(t)-x_j^N(t))\psi(x_j^N(t))dt  \label{cross}\\
&+\frac{\sigma^2}{N^2}\int_0^T\sum_{i\in\cN(\zeta_t^N)}\sum_{j\in\cN(\zeta_t^N): j\neq i}\nabla v^{\ep,z}(x_i^N(t)-x_j^N(t))^T C_\ep(\xi(x_i^N(t)-x_j^N(t)))\nabla \psi(x_j^N(t))\phi(x_i^N(t))dt\label{cross-11}\\
&+\frac{\sigma^2}{N^2}\int_0^T\sum_{i\in\cN(\zeta_t^N)}\sum_{j\in\cN(\zeta_t^N): j\neq i}\nabla \phi(x_i^N(t))^T C_\ep(\xi(x_i^N(t)-x_j^N(t)))\nabla\psi(x_j^N(t))v^{\ep,z}(x_i^N(t)-x_j^N(t))dt\label{cross-12}\\
&+\frac{\sigma_0^2+\sigma^2}{N^2}\int_0^T\sum_{i\in\cN(\zeta_t^N)}\sum_{j\in\cN(\zeta_t^N): j\neq i}\Delta  v^{\ep,z}(x_i^N(t)-x_j^N(t))\phi(x_i^N(t))\psi(x_j^N(t))dt   \label{2nd-or}\\
&+\frac{\sigma_0^2+\sigma^2}{2N^2}\int_0^T\sum_{i\in\cN(\zeta_t^N)}\sum_{j\in\cN(\zeta_t^N): j\neq i}v^{\ep,z}(x_i^N(t)-x_j^N(t))\big[\Delta \phi(x_i^N(t))\psi(x_j^N(t))+\phi(x_j^N(t))\Delta \psi(x_i^N(t))\big]dt  \label{2nd-test}\\
&+\frac{\sigma_0^2+\sigma^2}{N^2}\int_0^T\sum_{i\in\cN(\zeta_t^N)}\sum_{j\in\cN(\zeta_t^N): j\neq i}\nabla v^{\ep,z}(x_i^N(t)-x_j^N(t))\cdot\big[\nabla \phi(x_i^N(t))\psi(x_j^N(t))-\phi(x_j^N(t))\nabla\psi(x_i^N(t))\big]dt  \label{cross-2}\\
&-\frac{R_0}{N^3}\int_0^T\sum_{i\in\cN(\zeta_t^N)}\sum_{j\in\cN(\zeta_t^N): j\neq i}\theta^\ep(x_i^N(t)-x_j^N(t))v^{\ep,z}(x_i^N(t)-x_j^N(t))\phi(x_i^N(t))\psi(x_i^N(t))dt \label{coag-3iag}\\
&-\frac{R_0}{N^3}\int_0^T\sum_{i\in\cN(\zeta_t^N)}\sum_{j\in\cN(\zeta_t^N): j\neq i}\theta^\ep(x_i^N(t)-x_j^N(t))\nonumber\\
&\quad\quad\quad\cdot  \sum_{k\in\cN(\zeta_t^N):k\neq i,j}\big[v^{\ep,z}(x_i^N(t)-x_k^N(t))\phi(x_i^N(t))\psi(x_k^N(t))+v^{\ep,z}(x_k^N(t)-x_i^N(t))\phi(x_k^N(t))\psi(x_i^N(t))\big]dt  \label{coag-off}\\
&+\wt M_D^N(T)+\wt M_J^N(T).\label{big-mg}
\end{align}

We look for all the terms that contain the highest (i.e. second) partial derivatives of $v^{\ep,z}$, which are \eqref{2nd-or} and \eqref{off-1}. In a sense, they are the most dangerous in terms of regularity (not for fixed $\ep$ but as $\ep\to0$). Their sum 
\begin{align*}
\eqref{2nd-or}+\eqref{off-1}&=\frac{1}{N^2}\int_0^T\sum_{i\in\cN(\zeta_t^N)}\sum_{j\in\cN(\zeta_t^N): j\neq i}\Big[(\sigma_0^2+\sigma^2)\Delta  v^{\ep,z}(x_i^N(t)-x_j^N(t))\\
&\quad  -\sigma^2\nabla_{x_i}\big(C _\ep(\xi(x_i^N(t)-x_j^N(t)))\nabla v^{\ep,z}(x_i^N(t)-x_j^N(t)) \big)\Big]\phi(x_i^N(t))\psi(x_j^N(t))dt.
\end{align*}
Recall definiton of $v^{\ep,z}$ in terms of $u^\ep$ \eqref{def-v}, and use the cell equation \eqref{cell} (which we repeat here) 
\begin{align*}
(\sigma_0^2+\sigma^2)\Delta u^\ep (x)&-\sigma^{2}\nabla\cdot\left(C _\ep\left(\xi x\right)  \nabla u^{\ep}\left(  x\right)\right)  =R_{0}%
\theta^\ep\left(  x\right)  \Big(  1+\frac{u^{\ep}\left(  x\right) }{N} \Big)\\
&\text{with }\quad x=x_i^N(t)-x_j^N(t)
\end{align*}
for the terms that involve $u^{\ep}(x_i^N(t)-x_j^N(t))$, while leaving those terms with $u^{\ep}(x_i^N(t)-x_j^N(t)+z)$ as they are, gives
\begin{align*}
\eqref{2nd-or}+\eqref{off-1}&=\frac{1}{N^2}\int_0^T\sum_{i\in\cN(\zeta_t^N)}\sum_{j\in\cN(\zeta_t^N): j\neq i}\Big[(\sigma_0^2+\sigma^2)\Delta  u^{\ep}(x_i^N(t)-x_j^N(t)+z)\\
&\quad  -\sigma^2\nabla_{x_i}\big(C _\ep(\xi(x_i^N(t)-x_j^N(t)))\nabla u^{\ep}(x_i^N(t)-x_j^N(t)+z) \big)\Big]\phi(x_i^N(t))\psi(x_j^N(t))dt\\
&\quad -\frac{R_0}{N^2}\int_0^T\sum_{i\in\cN(\zeta_t^N)}\sum_{j\in\cN(\zeta_t^N): j\neq i}\theta^\ep(x_i^N(t)-x_j^N(t))\Big(1+\frac{1}{N}u^{\ep}(x_i^N(t)-x_j^N(t)) \Big)\phi(x_i^N(t))\psi(x_j^N(t))dt.
\end{align*}
It turns out, a posteriori (but already noted in \cite{hr}), that a term like \eqref{coag-3iag} with the same argument $x_i^N(t)-x_j^N(t)$ in both functions $\theta^\ep$ and $u^\ep$ is not negligible, whereas if one argument is $x_i^N(t)-x_j^N(t)$ and the other is $x_i^N(t)-x_j^N(t)+z$, then such term is negligible (such as \eqref{cov-3iag} and \eqref{diag-2}). Thus, we have to combine the above with the term \eqref{coag-3iag} to ``kill'' a non-negligible term, and finally obtain 
\begin{align}
&\eqref{2nd-or}+\eqref{off-1}+\eqref{coag-3iag}\nonumber\\
&=\frac{1}{N^2}\int_0^T\sum_{i\in\cN(\zeta_t^N)}\sum_{j\in\cN(\zeta_t^N): j\neq i}\Big[(\sigma_0^2+\sigma^2)\Delta  u^{\ep}(x_i^N(t)-x_j^N(t)+z)-R_0\theta^\ep(x_i^N(t)-x_j^N(t))\Big]\phi(x_i^N(t))\psi(x_j^N(t))dt \label{stoss-bis-2}\\
&  \quad\quad-\frac{\sigma^2}{N^2}\int_0^T\sum_{i\in\cN(\zeta_t^N)}\sum_{j\in\cN(\zeta_t^N): j\neq i}\nabla_{x_i}\big(C _\ep\big(\xi(x_i^N(t)-x_j^N(t))\big)\nabla u^{\ep}(x_i^N(t)-x_j^N(t)+z)\big)\phi(x_i^N(t))\psi(x_j^N(t))dt \label{cov-3iag}\\
&  \quad\quad-\frac{R_0}{N^3}\int_0^T\sum_{i\in\cN(\zeta_t^N)}\sum_{j\in\cN(\zeta_t^N): j\neq i}\theta^\ep(x_i^N(t)-x_j^N(t))u^{\ep}(x_i^N(t)-x_j^N(t)+z)\phi(x_i^N(t))\psi(x_j^N(t))dt.  \label{diag-2}
\end{align}
The final product of all these manouvres is that in \eqref{stoss-bis-2}, we have made the pre-limit in \eqref{stoss} appear, and on the other hand, we can expect to show that all the rest of the whole It\^o-Tanaka expansion, namely \eqref{ini}, \eqref{cross}, \eqref{cross-11}, \eqref{cross-12},  \eqref{2nd-test}, \eqref{cross-2}, \eqref{coag-off}, \eqref{cov-3iag}, \eqref{diag-2} as well as \eqref{qv-3iff}, \eqref{qv-3iff-2} and \eqref{qv-jump} below, are all negligible in $L^1(\P)$ in the limit first $N\to\infty$ and then $|z|\to0$. This will be done in Section \ref{sec:neg}. Since the whole expansion \eqref{ini}-\eqref{big-mg}  is an identity, the negligibility of all these ``minor'' terms will prove the desired negligibility of the ``main'' term in \eqref{stoss}.

\medskip
Finally, the martingales \eqref{big-mg} should be bounded in $L^2(\P)$ sense as follows.
\begin{align*}
\wt M_{D}^N(t)&:=\wt M_{D,1}^N(t)+\wt M_{D,2}^N(t)\\
\wt M_{D,1}^N(T)&:=\frac{\sigma}{N^2}\int_0^T\sum_{i\in\cN(\zeta_t^N)}\sum_{j\in\cN(\zeta_t^N): j\neq i}\Big[\nabla v^{\ep,z}(x_i^N(t)-x_j^N(t))\phi(x_i^N(t))\psi(x_j^N(t))\\ 
&\quad\quad+v^{\ep,z}(x_i^N(t)-x_j^N(t))\nabla\phi(x_i^N(t))\psi(x_j^N(t))  -\nabla v^{\ep,z}(x_j^N(t)-x_i^N(t))\phi(x_j^N(t))\psi(x_i^N(t)) \\
&\quad\quad +v^{\ep,z}(x_j^N(t)-x_i^N(t))\phi(x_j^N(t))\nabla \psi(x_i^N(t))  \Big] \cdot\sum_{k\in  \N}\lambda_ke_k^\ep(x_i^N(t)) dW_k(t).\\
\wt M_{D,2}^N(T)&:=\frac{\sigma_0}{N^2}\int_0^T\sum_{i\in\cN(\zeta_t^N)}\sum_{j\in\cN(\zeta_t^N): j\neq i}\Big[\nabla v^{\ep,z}(x_i^N(t)-x_j^N(t))\phi(x_i^N(t))\psi(x_j^N(t))\\ 
&\quad\quad+v^{\ep,z}(x_i^N(t)-x_j^N(t))\nabla\phi(x_i^N(t))\psi(x_j^N(t))  -\nabla v^{\ep,z}(x_j^N(t)-x_i^N(t))\phi(x_j^N(t))\psi(x_i^N(t)) \\
&\quad\quad +v^{\ep,z}(x_j^N(t)-x_i^N(t))\phi(x_j^N(t))\nabla \psi(x_i^N(t))  \Big]\cdot dB_i(t).
\end{align*}
Hence, by It\^o isometry we can bound their quadratic variation by
\begin{align}\label{qv-3iff}
&\E|\wt M_{D,1}^N(T)|^2=\frac{\sigma^2}{N^4}\int_0^T\sum_{k\in  \N}\E\Bigg|\sum_{i\in\cN(\zeta_t^N)}\sum_{j\in\cN(\zeta_t^N): j\neq i}\Big[\nabla v^{\ep,z}(x_i^N(t)-x_j^N(t))\phi(x_i^N(t))\psi(x_j^N(t))\nonumber\\ 
&\quad\quad+v^{\ep,z}(x_i^N(t)-x_j^N(t))\nabla\phi(x_i^N(t))\psi(x_j^N(t))  -\nabla v^{\ep,z}(x_j^N(t)-x_i^N(t))\phi(x_j^N(t))\psi(x_i^N(t))\nonumber \\
&\quad\quad +v^{\ep,z}(x_j^N(t)-x_i^N(t))\phi(x_j^N(t))\nabla \psi(x_i^N(t))  \Big] \cdot \lambda_ke_k^\ep(x_i^N(t))\Bigg|^2dt.\\
&\E|\wt M_{D,2}^N(T)|^2\le\frac{2\sigma_0^2}{N^4}\sum_{i\in\cN(\zeta_t^N)}\int_0^T\Bigg|\sum_{j\in\cN(\zeta_t^N): j\neq i}\Big[\nabla v^{\ep,z}(x_i^N(t)-x_j^N(t))\phi(x_i^N(t))\psi(x_j^N(t))\nonumber\\ 
&\quad\quad+v^{\ep,z}(x_i^N(t)-x_j^N(t))\nabla\phi(x_i^N(t))\psi(x_j^N(t))  -\nabla v^{\ep,z}(x_j^N(t)-x_i^N(t))\phi(x_j^N(t))\psi(x_i^N(t)) \nonumber\\
&\quad\quad +v^{\ep,z}(x_j^N(t)-x_i^N(t))\phi(x_j^N(t))\nabla \psi(x_i^N(t))  \Big]\Bigg|^2 dt.\label{qv-3iff-2}
\end{align}
Further, we can bound the second moment of the jump part of the martingale by the carr\'e-du-champ formula cf. \cite[Proposition 8.7]{DN}
\begin{align}
&\E|\wt M_J^N(T)|^2\le \int_0^T\sum_{i\in\cN(\zeta_t^N)}\sum_{j\in\cN(\zeta_t^N): j\neq i}\frac{R_0}{N}\theta^\ep(x_i^N(t)-x_j^N(t))\Bigg|\frac{1}{N^2}v^{\ep,z}(x_i^N(t)-x_j^N(t))\phi(x_i^N(t))\psi(x_j^N(t))\Bigg|^2dt \nonumber\\
&\quad\quad +\int_0^T\sum_{i\in\cN(\zeta_t^N)}\sum_{j\in\cN(\zeta_t^N): j\neq i}\frac{R_0}{N}\theta^\ep(x_i^N(t)-x_j^N(t))\Bigg|\frac{1}{N^2}\sum_{k\in\cN(\zeta_t^N): k\neq i,j}\Big[v^{\ep,z}(x_i^N(t)-x_k^N(t))\phi(x_i^N(t)) \psi(x_k^N(t))\nonumber\\
&\quad\quad +v^{\ep,z}(x_k^N(t)-x_i^N(t))\phi(x_k^N(t)) \psi(x_i^N(t))+v^{\ep,z}(x_j^N(t)-x_k^N(t))\phi(x_j^N(t))\psi(x_k^N(t))\nonumber\\
&\quad\quad +v^{\ep,z}(x_k^N(t)-x_j^N(t))\phi(x_k^N(t))\psi(x_j^N(t))\Big]\Bigg|^2dt \nonumber\\
&=\frac{R_0}{N^5}\int_0^T\sum_{i\in\cN(\zeta_t^N)}\sum_{j\in\cN(\zeta_t^N): j\neq i}\theta^\ep(x_i^N(t) -x_j^N(t))\big|v^{\ep,z}(x_i^N(t)-x_j^N(t))\phi(x_i^N(t))\psi(x_j^N(t))\big|^2dt \nonumber\\
&\quad\quad + \frac{4R_0}{N^5}\int_0^T\sum_{i\in\cN(\zeta_t^N)}\sum_{j\in\cN(\zeta_t^N): j\neq i}\theta^\ep(x_i^N(t)-x_j^N(t))\nonumber\\
&\quad\quad \cdot\Bigg|\sum_{k\in\cN(\zeta_t^N): k\neq i,j}\Big[v^{\ep,z}(x_i^N(t)-x_k^N(t))\phi(x_i^N(t))\psi(x_k^N(t))+v^{\ep,z}(x_k^N(t)-x_i^N(t))\phi(x_k^N(t))\psi(x_i^N(t))\Big]\Bigg|^2dt.\label{qv-jump}
\end{align}

\section{The negligibility of various terms}\label{sec:neg}
We set out to prove that all the terms \eqref{ini}, \eqref{cross}, \eqref{cross-11},  \eqref{cross-12},   \eqref{2nd-test}, \eqref{cross-2}, \eqref{coag-off}, \eqref{cov-3iag}, \eqref{diag-2}, \eqref{qv-3iff}, \eqref{qv-3iff-2} and \eqref{qv-jump} are negligible in the order of limits $\ep\to0$ then $|z|\to0$ (we refer to it simply as ``negligible'' in the sequel), which by the It\^o-Tanaka identity \eqref{ini}-\eqref{big-mg} of Section \ref{sec:tanaka} yields Proposition \ref{stoss}, and in turn Theorem \ref{thm:main}.

Recall once more the $\ep$-dependent cell equation $u^\ep:\R^3\to\R$
\begin{align}\label{cell-bis}
\sigma_0^2\Delta u^\ep(x)+\sigma^2\nabla\cdot\Big(\omega\big(\frac{\xi x}{\ep}\big)  \nabla u^{\ep}\left(  x\right)\Big)  =R_{0}%
\theta^\ep\left(  x\right)  \Big(  1+\frac{u^{\ep}\left(  x\right) }{N} \Big),
\end{align}
and the unscaled cell equation
\begin{align}\label{cell-unscaled-bis}
\sigma_0^2\Delta u(x)+\sigma^{2}\nabla\cdot\big(\omega(\xi x)\nabla u(x)\big)  =R_{0}%
\theta(x)\big(  1+u(x) \big).
\end{align}
They are related by $u^\ep(x)=\ep^{-1}u(x/\ep)$ for any $x\in\R^3$, hence whenever $u$ exists, $u^\ep$ also exists. 
Crucial for our purpose are pointwise estimates of $u^\ep(x)$ stated in lemmas below. They are proved in \cite[Lemma 3.5]{hr} in the case when the elliptic operator is $\Delta$. To generalize those estimates to our case of uniformly elliptic divergence-form operator with variable coefficients, we will derive all estimates for the $\ep$-independent $u(x)$, and then transfer them to $u^\ep(x)$ by their scaling relation.

Denote the second-order divergence-form operator in the $\ep$-independent equation \eqref{cell-unscaled-bis} by
\begin{align*}
\cA_x&:=\sigma_0^2\Delta+\sigma^2\nabla\cdot\left(\omega(\xi x)\nabla\right)-R_0\theta(x)\\
&=\sigma_0^2\Delta+\sigma^2\text{Tr}\left(\omega(\xi x)\nabla^2 \right)-R_0\theta(x).
\end{align*}
(The equivalence between divergence and non-divergence form is due to \eqref{equiv-3iv}.)
It is uniformly elliptic, since for any $\xi\in\R^3$ and $x\in\R^3$
\begin{align*}
\xi^T\left(\sigma_0^2I_3+\sigma^2 \omega(\xi x)\right)\xi\ge\sigma_0^2|\xi|^2,
\end{align*}
where the matrix $\omega(\cdot)$ is nonnegative definite and $\sigma_0>0$. 

Let us consider the parabolic operator $\partial_t-\cA_x$, and let $p(t; x,y)$ denote its fundamental solution (i.e. heat kernel). By parabolic regularity theory \cite[Theorem 1, page 483]{ilyin}, $p(t; x,y)$ and its first two spatial derivatives satisfy, for a constant $M$ that depends only on $\sigma_0, \sigma, \xi, R_0$ and the $\mathsf C^\alpha$-norm of $a$ and $\theta$ (these data are all given and fixed):
\begin{equation}\label{hke}
\begin{aligned}
p(t; x,y)&\le \frac{M}{t^{3/2}}e^{-\frac{|x-y|^2}{Mt}},\\
|\nabla_x p(t; x,y)|&\le \frac{M}{t^{3/2+1/2}}e^{-\frac{|x-y|^2}{Mt}},\\
|\nabla_x^2p(t; x,y)|&\le \frac{M}{t^{3/2+1}}e^{-\frac{|x-y|^2}{Mt}}.
\end{aligned}
\end{equation}
Since we are in $\R^3$, the Green function $g(x,y)>0$ (i.e. fundamental solution) of $\cA_x$ exists and is related to the heat kernel by
\begin{align*}
g(x,y)=\int_0^\infty p(t;x,y)dt, \quad x, y\in\R^3.
\end{align*}
Thus, by \eqref{hke} we have 
\begin{align}\label{est-green}
g(x,y)\le \int_0^\infty\frac{M}{t^{3/2}}e^{-\frac{|x-y|^2}{Mt}}dt\stackrel{s=\frac{|x-y|^2}{t}}{=}M\int_0^\infty \frac{s^{-1/2}e^{-s/M}}{|x-y|}ds\le M'|x-y|^{-1},
\end{align}
for some finite constant $M'=M'(M)$, since $s^{-1/2}e^{-s/M}$ is integrable at both $0$ and $\infty$. Similarly, 
\begin{align}\label{est-green-grad}
|\nabla_xg(x,y)|&\le \left|\int_0^\infty\nabla_x p(t;x,y)dt\right|\le \int_0^\infty|\nabla_xp(t;x,y)|dt\nonumber\\
&\le \int_0^\infty\frac{M}{t^{3/2+1/2}}e^{-\frac{|x-y|^2}{Mt}}dt=M\int_0^\infty \frac{e^{-s/M}}{|x-y|^2}ds\le M'|x-y|^{-2},\\
|\nabla_x^2g(x,y)|&\le \left|\int_0^\infty\nabla_x^2 p(t;x,y)dt\right|\le \int_0^\infty|\nabla_x^2p(t;x,y)|dt\nonumber\\
&\le \int_0^\infty\frac{M}{t^{3/2+1}}e^{-\frac{|x-y|^2}{Mt}}dt=M\int_0^\infty \frac{s^{1/2}e^{-s/M}}{|x-y|^3}ds\le M'|x-y|^{-3},\label{est-green-hess}
\end{align}
since $e^{-s/M}$ and  $s^{1/2}e^{-s/M}$ are both integrable at both $0$ and $\infty$.

Since \eqref{cell-unscaled-bis} can be written as $\cA_xu(x)=R_0\theta(x)$, one can find a solution given by
\begin{align}\label{exp-sol}
u(x)=-\int_{\R^3}g(x,y)R_0\theta(y)dy, \quad x\in\R^3.
\end{align}
(The negative sign is just a convention, due to the fact that $\cA_x$ is a negative operator while we take $g(x,y)>0$.)
By elliptic regularity theory \cite[Theorem 4.3.1]{krylov}, since $\theta\in \mathsf C^\alpha(\R^3)$ and coefficients of $\cA_x$ are symmetric, $\mathsf C^\alpha$ and bounded, we have $u(x)\in\mathsf C^{2, \alpha}(\R^3)$.

\begin{lemma}\label{lem:unif-bds}
There exists a finite constant $M=M(\sigma_0, \sigma, \xi, R_0, \omega, \theta)$ such that for all $x\in\R^3$  and $\ep\in(0,1)$, we have
\begin{align}
|u^\ep(x)|&\le M\left(|x|\vee\ep\right)^{-1}, \label{unif-bd}\\
|\nabla u^\ep(x)|&\le M\left(|x|\vee\ep\right)^{-2},  \label{unif-grad-bd}
\end{align}
and for all $|x|\ge 2\ep$ we have 
\begin{align}
|\nabla^2 u^\ep(x)|&\le M|x|^{-3}.\label{unif-hess-bd}
\end{align}
\end{lemma}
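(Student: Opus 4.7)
\textbf{Proof proposal for Lemma \ref{lem:unif-bds}.} The plan is to first derive the three bounds for the unscaled function $u(x)=-R_0\int_{\R^3} g(x,y)\theta(y)\,dy$ given by \eqref{exp-sol}, using the Green function pointwise estimates \eqref{est-green}--\eqref{est-green-hess} established earlier, and then transfer them to $u^\ep$ via the scaling relation $u^\ep(x)=\ep^{-1}u(x/\ep)$.

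For the $L^\infty$ and gradient bounds on $u$, I would use that $\theta$ is bounded with compact support in $B(0,1)$ together with \eqref{est-green} and \eqref{est-green-grad}. For $|x|\ge 2$, the triangle inequality gives $|x-y|\ge |x|/2$ uniformly on $B(0,1)$, so both integrals reduce to $|x|^{-1}$ and $|x|^{-2}$ respectively. For $|x|\le 2$ the kernels $|x-y|^{-1}$ and $|x-y|^{-2}$ are still locally integrable in $\R^3$ (the singularity exponents $1$ and $2$ are strictly below the critical $3$), so the integrals are uniformly bounded. This yields $|u(x)|\le M(|x|\vee 1)^{-1}$ and $|\nabla u(x)|\le M(|x|\vee 1)^{-2}$ on all of $\R^3$.

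The main obstacle is the Hessian bound, because $|\nabla_x^2 g(x,y)|\lesssim |x-y|^{-3}$ is borderline non-integrable and the naive argument fails at the diagonal. This is why the conclusion \eqref{unif-hess-bd} is restricted to $|x|\ge 2\ep$, which corresponds after rescaling to $|x/\ep|\ge 2$, i.e.\ the point lies well away from the support of $\theta$. In that regime, for every $y\in B(0,1)$ one has $|x-y|\ge |x|/2$, so $\int_{B(0,1)}|\nabla_x^2 g(x,y)|\,dy \lesssim |x|^{-3}$, giving $|\nabla^2 u(x)|\le M|x|^{-3}$ for $|x|\ge 2$.

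Finally I would transfer everything by differentiating $u^\ep(x)=\ep^{-1}u(x/\ep)$ to get $\nabla u^\ep(x)=\ep^{-2}\nabla u(x/\ep)$ and $\nabla^2 u^\ep(x)=\ep^{-3}\nabla^2 u(x/\ep)$. Splitting into the two cases $|x|\le \ep$ (where $|x/\ep|\vee 1=1$) and $|x|\ge \ep$ (where $|x/\ep|\vee 1=|x|/\ep$), the bounds on $u$ and $\nabla u$ combine into the uniform estimates
\begin{align*}
|u^\ep(x)| \le \ep^{-1} M(|x/\ep|\vee 1)^{-1} = M(|x|\vee\ep)^{-1}, \qquad |\nabla u^\ep(x)|\le M(|x|\vee\ep)^{-2},
\end{align*}
while for $|x|\ge 2\ep$ the Hessian bound from the previous paragraph gives $|\nabla^2 u^\ep(x)|\le \ep^{-3}M(|x|/\ep)^{-3}=M|x|^{-3}$. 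The constants throughout depend only on $M'$ from \eqref{est-green}--\eqref{est-green-hess} and on $R_0\|\theta\|_{L^\infty}$ and $\text{diam}(\mathrm{supp}\,\theta)$, which are all among the parameters listed in the statement.
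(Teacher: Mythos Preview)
Your proposal is correct and follows essentially the same approach as the paper: derive the bounds for the unscaled $u$ via the Green function estimates \eqref{est-green}--\eqref{est-green-hess}, splitting into the cases $|x|\ge 2$ (triangle inequality) and $|x|\le 2$ (local integrability of $|x-y|^{-1}$ and $|x-y|^{-2}$), treat the Hessian only for $|x|\ge 2$, and then transfer via the scaling $u^\ep(x)=\ep^{-1}u(x/\ep)$. The paper's proof is the same in all essential respects.
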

\begin{proof}
We will first derive the estimate for $u(x)$, then transfer them to $u^\ep(x)$. Since $u$ solves $\cA_xu(x)=R_0\theta(x)$, we can write
\begin{align}\label{u-int-exp}
u(x)=-\int_{\R^3}g(x,y)R_0\theta(y)dy
\end{align}
where $g(x,y)$ is the Green function of $\cA_x$. Recall that $\theta$ is nonnegative and has compact support in $B(0,1)$. If $|x|\ge 2$, then for any $y\in \text{supp}(\theta)$, $|x-y|\ge |x|-|y|\ge |x|-1\ge |x|/2$. Thus, by \eqref{est-green}, $g(x,y)\le M'|x|^{-1}$. Hence, in this case
\begin{align*}
|u(x)|\le \int_{\R^3}g(x,y)R_0\theta(y)dy\le \int_{\R^3}M'|x|^{-1}R_0\theta(y)dy\le M'R_0 |x|^{-1}.
\end{align*}
If on the other hand, $|x|\le 2$, then for any $y\in \text{supp}(\theta)$, $x-y\in B(0,3)$, and hence 
\begin{align*}
|u(x)|\le \int_{\R^3}g(x,y)R_0\theta(y)dy\le R_0\|\theta\|_{L^\infty} \int_{B(0,1)} M' |x-y|^{-1}dy\lesssim \int_{B(0,3)}|y|^{-1}dy\lesssim 1.
\end{align*}
Thus, we conclude the estimate
\begin{align*}
|u(x)|&\le M(|x|\vee 1)^{-1},\\
|u^\ep(x)|&=\ep^{-1}|u(x/\ep)|\le M (|x|\vee \ep)^{-1}.
\end{align*}
Turning to the gradient estimate of $u$, 
\begin{align*}
\nabla u(x) = -\int_{\R^3}\nabla_x g(x,y)R_0\theta(y)dy.
\end{align*}
Hence, if $|x|\ge2$ then for any $y\in \text{supp}(\theta)$, $|x-y|\ge |x|/2$, and by \eqref{est-green-grad},
\begin{align*}
|\nabla u(x)|&\le \int_{\R^3}|\nabla_x g(x,y)|R_0\theta(y)dy\le \int_{\R^3}M'|x|^{-2}R_0\theta(y)dy\le M'R_0 |x|^{-2}.
\end{align*}
If on the other hand, $|x|\le 2$, then for any $y\in \text{supp}(\theta)$, $x-y\in B(0,3)$, and hence 
\begin{align*}
|\nabla u(x)|\le \int_{\R^3}|\nabla_xg(x,y)|R_0\theta(y)dy\le R_0\|\theta\|_{L^\infty} \int_{B(0,1)} M' |x-y|^{-2}dy\lesssim \int_{B(0,3)}|y|^{-2}dy\lesssim 1.
\end{align*}
Thus, we conclude the estimate
\begin{align*}
|\nabla u(x)|&\le M(|x|\vee 1)^{-2},\\
|\nabla u^\ep(x)|&=\ep^{-2}|u(x/\ep)|\le M (|x|\vee \ep)^{-2}.
\end{align*}
Turning to the estimate on the Hessian of $u$, 
\begin{align*}
\nabla^2 u(x) = -\int_{\R^3}\nabla_x^2 g(x,y)R_0\theta(y)dy.
\end{align*}
We only consider the case that $|x|\ge2$, hence for any $y\in \text{supp}(\theta)$, $|x-y|\ge |x|/2$. By \eqref{est-green-hess}, we thus have that 
\begin{align*}
|\nabla^2 u(x)|&\le \int_{\R^3}|\nabla_x^2 g(x,y)|R_0\theta(y)dy\le \int_{\R^3}M'|x|^{-3}R_0\theta(y)dy\le M'R_0 |x|^{-3}.
\end{align*}
By scaling relation, this yields that for any $x$ such that $|x|\ge 2\ep$,
\begin{align*}
|\nabla^2 u^\ep(x)|&=\ep^{-3}|u(x/\ep)|\le M|x|^{-3}.
\end{align*}
\end{proof}

\begin{lemma}
There exists a finite constant $M=M(\sigma_0, \sigma, \xi, R_0, \omega, \theta)$ such that for all $x\in\R^3$ with $|x|\ge 2|z|+2\ep$, $\ep\in(0,1)$, we have 
\begin{align}
|u^\ep(x+z)-u^\ep(x)|\le M|z||x|^{-2}, \label{diff-est}\\
|\nabla u^\ep(x+z)-\nabla u^\ep(x)|\le M|z||x|^{-3}. \label{diff-est-grad}
\end{align}
\end{lemma}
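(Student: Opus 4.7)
The plan is to derive both inequalities from the fundamental theorem of calculus combined with the pointwise bounds from Lemma \ref{lem:unif-bds}. Specifically, I would write
\begin{align*}
u^\ep(x+z)-u^\ep(x) &= \int_0^1 \nabla u^\ep(x+sz)\cdot z\, ds,\\
\nabla u^\ep(x+z)-\nabla u^\ep(x) &= \int_0^1 \nabla^2 u^\ep(x+sz)\cdot z\, ds,
\end{align*}
which is legitimate because $u^\ep \in \mathsf C^{2,\alpha}(\R^3)$ by elliptic regularity applied to \eqref{cell-unscaled-bis} and the scaling $u^\ep(x)=\ep^{-1}u(x/\ep)$.

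The next step is to control $|x+sz|$ from below uniformly in $s\in[0,1]$ using the hypothesis $|x|\ge 2|z|+2\ep$. Since $|x+sz|\ge |x|-|z|$ and $|z|\le |x|/2-\ep\le |x|/2$, I get $|x+sz|\ge |x|/2$. Moreover $|x|/2 \ge |z|+\ep \ge \ep$, so in particular $(|x+sz|\vee\ep)\ge |x|/2$, and also $|x+sz|\ge 2\ep$ (since $|x|\ge 2|z|+2\ep\ge 2\ep+|z|\ge 4\ep$ when $|z|\ge\ep$, or directly $|x+sz|\ge|x|/2\ge\ep+|z|\ge 2\ep$ otherwise). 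Thus both the bound $|\nabla u^\ep(y)|\le M(|y|\vee\ep)^{-2}$ and the Hessian bound $|\nabla^2 u^\ep(y)|\le M|y|^{-3}$ from Lemma \ref{lem:unif-bds} apply at every point $y=x+sz$ along the segment.

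Plugging these into the two integral representations yields
\begin{align*}
|u^\ep(x+z)-u^\ep(x)| &\le |z|\int_0^1 M(|x+sz|\vee\ep)^{-2}ds \le M|z|(|x|/2)^{-2} = 4M|z||x|^{-2},\\
|\nabla u^\ep(x+z)-\nabla u^\ep(x)| &\le |z|\int_0^1 M|x+sz|^{-3}ds \le M|z|(|x|/2)^{-3} = 8M|z||x|^{-3},
\end{align*}
giving both inequalities after renaming the constant $M$.

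There is no real obstacle in this proof; the only minor bookkeeping is making sure the assumption $|x|\ge 2|z|+2\ep$ is strong enough that the entire segment joining $x$ and $x+z$ stays outside the ``singular zone'' of size $\ep$ where the bounds of Lemma \ref{lem:unif-bds} degrade. This is precisely why the hypothesis is formulated as $|x|\ge 2|z|+2\ep$ rather than simply $|x|\ge 2\ep$: the factor $2|z|$ absorbs the displacement along the segment, while the $2\ep$ keeps us away from the inner region where the Hessian estimate fails.
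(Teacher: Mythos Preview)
Your proof is correct and takes a genuinely different, more direct route than the paper. The paper first works with the unscaled $u$, writes $u(x+z)-u(x)$ via the Green's function representation \eqref{u-int-exp}, applies the fundamental theorem of calculus to $g(x+sz,y)$, invokes the Green's function derivative bounds \eqref{est-green-grad}--\eqref{est-green-hess}, and only at the end transfers everything to $u^\ep$ by the scaling relation \eqref{u-rescaling}. You instead apply the fundamental theorem of calculus directly to $u^\ep$ along the segment and feed in the already-established pointwise bounds of Lemma~\ref{lem:unif-bds}. Your approach is shorter and avoids re-entering the Green's function machinery, at the small cost of relying on Lemma~\ref{lem:unif-bds} as a black box; the paper's approach is more self-contained but duplicates work.

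One bookkeeping slip: your case analysis for $|x+sz|\ge 2\ep$ goes wrong in the ``otherwise'' branch, where you write $|x|/2\ge \ep+|z|\ge 2\ep$ assuming $|z|<\ep$, but $\ep+|z|<2\ep$ in that case. The clean argument is simply $|x+sz|\ge |x|-|z|\ge (2|z|+2\ep)-|z|=|z|+2\ep\ge 2\ep$, with no case split needed. This gives you exactly the threshold required for the Hessian bound \eqref{unif-hess-bd}, so the rest of your argument goes through unchanged.
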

\begin{proof}
We will first derive the estimate for $u(x)$, then transfer them to $u^\ep(x)$.  We consider only  the regime $|x|\ge 2|z|+2$. By \eqref{u-int-exp},
\begin{align*}
u(x+z)-u(x)=-\int_{\R^3}\left(g(x+z,y)-g(x,y)\right)R_0\theta(y)dy
\end{align*}
Since 
\begin{align*}
g(x+z,y)-g(x,y)=\int_0^1\partial_sg(x+sz,y)ds=\int_0^1z\cdot\nabla_xg(x+sz,y)ds,
\end{align*}
we have
\begin{align*}
|g(x+z,y)-g(x,y)|\le \int_0^1|z\cdot\nabla_xg(x+sz,y)|ds\le |z|\sup_{s\in[0,1]}|\nabla_xg(x+sz,y)|.
\end{align*}
Since $|x|\ge 2|z|+2$, we have for any $y\in\text{supp}(\theta)$ and $s\in[0,1]$, 
\[
|x+sz-y|\ge|x|-|sz|-|y|\ge |x|/2 +|x|/2-|z|-1\ge |x|/2.
\]
Hence, by \eqref{est-green-grad}, for some finite constant $M$, 
\[
|g(x+z,y)-g(x,y)|\le M|z||x|^{-2}.
\]
Hence, 
\begin{align*}
|u(x+z)-u(x)|\le \int_{\R^3}\left|g(x+z,y)-g(x,y)\right|R_0\theta(y)dy\le R_0M|z||x|^{-2}.
\end{align*}
By scaling relations, this implies that for any $|x|\ge 2|z|+2\ep$, 
\begin{align*}
|u^\ep(x+z)-u^\ep(x)|=\ep^{-1}|u(x/\ep+z/\ep)-u(x/\ep)|\lesssim \ep^{-1}|z/\ep||x/\ep|^{-2}=|z||x|^{-2},
\end{align*}
where $|x/\ep|\ge 2|z/\ep|+2$ hence the above bound applies.

Turning to the estimates for the gradient, we have 
\begin{align*}
\nabla u(x+z)-\nabla u(x)=-\int_{\R^3}\left(\nabla_xg(x+z,y)-\nabla_xg(x,y)\right)R_0\theta(y)dy.
\end{align*}
Since 
\begin{align*}
\nabla_xg(x+z,y)-\nabla_xg(x,y)=\int_0^1\partial_s\nabla_xg(x+sz,y)ds=\int_0^1z\cdot\nabla_x^2g(x+sz,y)ds,
\end{align*}
we have 
\begin{align*}
|\nabla_xg(x+z,y)-\nabla_xg(x,y)|\le \int_0^1|z\cdot\nabla_x^2g(x+sz,y)|ds\le |z|\sup_{s\in[0,1]}|\nabla_x^2g(x+sz,y)|.
\end{align*}
For $|x|\ge 2|z|+2$, any $y\in\text{supp}(\theta)$ and $s\in[0,1]$, we have $|x+sz-y|\ge |x|/2$, thus by \eqref{est-green-hess} for some finite constant $M$, we have 
\[
|\nabla_xg(x+z,y)-\nabla_xg(x,y)|\le M|z||x|^{-3}.
\]
Hence, 
\begin{align*}
|\nabla u(x+z)-\nabla u(x)|\le \int_{\R^3}\left|\nabla_xg(x+z,y)-\nabla_xg(x,y)\right|R_0\theta(y)dy\le R_0M|z||x|^{-2}\le R_0M|z||x|^{-3}.
\end{align*}
By scaling, this implies that for any $|x|\ge 2|z|+2\ep$, 
\begin{align*}
|\nabla u^\ep(x+z)-\nabla u^\ep(x)|=\ep^{-2}|\nabla u(x/\ep+z/\ep)-\nabla u(x/\ep)|\lesssim \ep^{-2}|z/\ep||x/\ep|^{-3}=|z||x|^{-3}.
\end{align*}
\end{proof}

\begin{lemma}\label{lem:sol-bounds}
The $\mathsf C^{2, \alpha}$-solution \eqref{exp-sol} of \eqref{cell-unscaled-bis} satisfies $-1\le u(x)\le 0$ for all $x\in\R^3$. Hence $-N\le u^\ep(x)\le 0$.
\end{lemma}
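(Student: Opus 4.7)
I would split the bound into its two halves and then pass to $u^\ep$ by the scaling relation.

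For the upper bound $u \le 0$, the cleanest route is the explicit representation \eqref{exp-sol}: since the Green's function $g(x,y)$ of $\cA_x$ is strictly positive (Aronson's lower bound for uniformly parabolic operators; in fact we only need $g \ge 0$, which is standard), and $R_0 \theta \ge 0$, the integrand in $u(x) = -\int_{\R^3} g(x,y) R_0 \theta(y)\,dy$ is nonnegative, so $u \le 0$ pointwise.

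For the lower bound $u \ge -1$, I would apply a weak maximum principle to the shifted function $v := u + 1$. Substituting $u = v - 1$ into \eqref{cell-unscaled-bis} gives the homogeneous equation
$$
\sigma_0^2 \Delta v + \sigma^2 \nabla \cdot\big(\omega(\xi x)\nabla v\big) - R_0 \theta(x) v = 0 \quad \text{on } \R^3.
$$
From \eqref{unif-bd} applied at $\ep = 1$ (or directly from the Green's-function estimate \eqref{est-green} together with the compact support of $\theta$), we have $u(x) \to 0$ as $|x| \to \infty$, hence $v \to 1$ at infinity; consequently the open set $\Omega := \{v < 0\}$ is bounded. On $\Omega$ the equation reads $Lv = R_0\theta(x) v \le 0$, where $L := \sigma_0^2 \Delta + \sigma^2 \nabla \cdot(\omega(\xi x)\nabla)$ is strictly and uniformly elliptic because $\sigma_0 > 0$ and $\omega$ is nonnegative definite. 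By \eqref{equiv-3iv}, $L$ coincides with its nondivergence form $\sigma_0^2 \Delta + \sigma^2 \mathrm{Tr}(\omega(\xi x) \nabla^2)$, carrying no zero-order term, so the classical weak minimum principle (Gilbarg--Trudinger, Thm.~3.1) applies to $v$ on $\Omega$. Since $v = 0$ on $\partial \Omega$ by continuity, the principle gives $v \ge 0$ on $\Omega$, contradicting the definition of $\Omega$ unless $\Omega = \emptyset$. Hence $v \ge 0$, i.e., $u \ge -1$.

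Finally, the bound for $u^\ep$ follows immediately from the scaling relation $u^\ep(x) = \ep^{-1} u(x/\ep)$ in \eqref{u-rescaling} combined with $N = \ep^{-1}$: multiplying $-1 \le u(\cdot) \le 0$ by $\ep^{-1}$ yields $-N \le u^\ep(x) \le 0$ for every $x \in \R^3$. The only step that required real work was the lower bound; the main (mild) subtlety there is ensuring we may legitimately invoke the weak maximum principle on the bounded set $\Omega$ — this is guaranteed by the uniform ellipticity of $L$, the vanishing of its zero-order coefficient thanks to \eqref{equiv-3iv}, and the $\mathsf C^{2,\alpha}$-regularity of $u$ established earlier, all of which are already in hand.
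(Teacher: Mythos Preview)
Your proof is correct. The upper bound $u\le 0$ is handled identically to the paper, via the Green's function representation \eqref{exp-sol} and positivity of $g$. For the lower bound $u\ge -1$, however, you take a genuinely different route. The paper argues by an energy method: it multiplies the equation by a smooth approximation $\phi_\delta(u)$ of $(1+u)\mathbf 1_{\{1+u<0\}}$, integrates by parts on $B(0,R)$, uses the decay of $u$ and $\nabla u$ (from Lemma~\ref{lem:unif-bds}) to discard the boundary term as $R\to\infty$, and in the limit $\delta\to 0$ obtains $\int_{\{u\le -1\}}\nabla u^T(\sigma_0^2 I_3+\sigma^2\omega(\xi x))\nabla u\,dx=0$, forcing $\nabla u=0$ on that set and hence $u\equiv -1$ there. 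Your argument is more direct: you set $v=u+1$, note that $\{v<0\}$ is bounded because $v\to 1$ at infinity, and apply the classical weak minimum principle to $Lv\le 0$ on that set, exploiting that \eqref{equiv-3iv} kills the first-order terms so that $L$ has no zero-order part. This is shorter and uses only the decay of $u$ itself (not of $\nabla u$). The paper's energy argument, on the other hand, stays in divergence form throughout and would survive with rougher coefficients where the non-divergence maximum principle may not be directly available.
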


\begin{proof}
The argument is similar to \cite[page 64, Step 6]{hr}. Denote here 
\[
\wt \cA_x:=\sigma_0^2\Delta+\sigma^2\nabla\cdot\left(\omega(\xi x)\nabla\right)
\]
and \eqref{cell-unscaled-bis} can be written as $\wt\cA_xu(x)=\theta(x)(1+u(x))$. 
For each $\delta\in(0,1)$, let $\chi_\delta:\R\to[0,1]$ be smooth, identically $1$ on $(-\infty, -\delta)$, identically $0$ on $(0,\infty)$, and decreasing on $[-\delta,0]$, and let
\begin{align*}
\phi_\delta(r) := (1+r)\chi_\delta(1+r),
\end{align*}
which is smooth and non-3ecreasing. Further assume that as $\delta\to0$, $\chi_\delta \to 1_{(-\infty, 0)}$ in $\mathsf C^2$-norm. Clearly, we have the property that $|\phi_\delta(r)|\le 2r$ for $r\ge1$ and $\phi'_\delta(r)\ge0$.

Integration by parts in the ball $B(0,R)$, $R\ge1$, we have 
\begin{align*}
\int_{B(0,R)}\phi_\delta(u(x))\wt\cA_x u(x)du =& -\int_{B_R}\phi'_\delta(u(x))\nabla u(x)^T\left(\sigma_0^2 I_3+\sigma^2\omega(\xi x)\right)\nabla u(x)dx\\&+ \int_{\partial B(0,R)}\phi_\delta(u(x))\left(\sigma_0^2 I_3+\sigma^2\omega(\xi x)\right)\nabla u(x)\cdot d\vec n(x),
\end{align*}
where $\vec n(x)$ denotes the unit outward normal vector. 
By \eqref{unif-bd} and \eqref{unif-grad-bd},
\begin{align*}
\left| \int_{\partial B(0,R)}\phi_\delta(u(x))\left(\sigma_0^2 I_3+\sigma^2\omega(\xi x)\right)\nabla u(x)\cdot d\vec n(x)\right|\lesssim R^{-1}R^{-2}|\partial B(0,R)|=O(R^{-1}).
\end{align*}
Hence taking $R\to\infty$,  we obtain that 
\begin{align}\label{quadratic-form}
\int_{\R^3}\phi_\delta(u(x))\wt\cA_x u(x)du =& -\int_{\R^3}\phi'_\delta(u(x))\nabla u(x)^T\left(\sigma_0^2 I_3+\sigma^2\omega(\xi x)\right)\nabla u(x)dx\le0 .
\end{align}
On the other hand,
\begin{align*}
&\int_{\R^3}\phi_\delta(u(x))\wt\cA_x u(x)du =\int_{\R^3}\phi_\delta(u(x))\theta(x)\left(1+u(x)\right)dx=\int_{\R^3}\theta(x)\left(1+u(x)\right)^2\chi_\delta(1+u(x))dx\ge0,
\end{align*}
hence in view of \eqref{quadratic-form} the right-hand side is $0$. Letting $\delta\to0$ in \eqref{quadratic-form} , since $\phi'_\delta\to 1_{(-\infty, -1)}$ by construction, we get 
\begin{align*}
\int_{\R^3}1_{\{u(x)\le-1\}}\nabla u(x)^T\left(\sigma_0^2 I_3+\sigma^2\omega(\xi x)\right)\nabla u(x)dx=0.
\end{align*}
Since the quadratic form in the above expression is positive unless $\nabla u(x)=0$, we conclude that $\nabla u(x)=0$ on the set $\{x\in\R^3: u(x)\le -1\}$. Since $u$ is continuous, we must have $u(x)=-1$ on this set, and so $u(x)\ge -1$ for all $x\in\R^3$. Further, by \eqref{exp-sol} and $g(x,y)>0$, we have  $u(x)\le 0$ for all $x\in\R^3$.
\end{proof}

\medskip

Now we proceed to bound all terms that are expected to be negligible. We will make extensive use of the auxiliary free particle system $\{y_i^\ep\}_{i=1}^\infty$ \eqref{free-sys} introduced in Section \ref{sec:emp}. Recall that under a natural coupling, whenever a true particle $x_i^N$ is active by time $t$, its trajectory on $[0,t]$ must coincide with that of the free particle $y_i^\ep$. 
We start with the term \eqref{diag-2}.
\begin{proposition}
\begin{align}\label{diag-2-bis}
\lim_{|z|\to0}\limsup_{\ep\to0}\E\Bigg|\frac{1}{N^3}\int_0^T\sum_{i\in\cN(\zeta_t^N)}\sum_{j\in\cN(\zeta_t^N): j\neq i}\theta^\ep(x_i^N(t)-x_j^N(t))u^{\ep}(x_i^N(t)-x_j^N(t)+z)\phi(x_i^N(t))\psi(x_j^N(t))dt\Bigg|=0.
\end{align}
\end{proposition}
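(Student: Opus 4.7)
The factor $N^{-3}$ in front of the double sum, compared to the $N^{-2}$ appearing in \eqref{stoss}, is what makes this term negligible without any cancellation. Roughly speaking, after taking absolute values we lose at most one factor of $N$ via the pair density argument, leaving a $1/N$ gain; and since $\theta^\ep$ and $u^\ep(\,\cdot\, + z)$ are supported/singular at different locations when $|z|$ is fixed and $\ep$ is small, the remaining integral is well controlled by $|z|^{-1}$ via Lemma \ref{lem:unif-bds}.

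Concretely, the first step is to move absolute values inside and bound $\phi, \psi$ by their sup norms:
\begin{align*}
&\E\Bigg|\frac{1}{N^3}\int_0^T\sum_{i,j\in\cN(\zeta_t^N): j\neq i}\theta^\ep(x_i^N(t)-x_j^N(t))u^{\ep}(x_i^N(t)-x_j^N(t)+z)\phi(x_i^N(t))\psi(x_j^N(t))dt\Bigg|\\
&\le \frac{\|\phi\|_\infty\|\psi\|_\infty}{N^3}\int_0^T\E\sum_{i,j\in\cN(\zeta_t^N): j\neq i}\theta^\ep(x_i^N(t)-x_j^N(t))\,\big|u^{\ep}(x_i^N(t)-x_j^N(t)+z)\big|\,dt.
\end{align*}
Next, use the coupling with the auxiliary free system \eqref{free-sys}: every active particle $x_i^N(t)$ coincides with $y_i^\ep(t)$, so the sum over $\cN(\zeta_t^N)$ is dominated by the sum over all $i=1,\dots,N$ for the free system. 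By exchangeability of $(y_i^\ep)_{i=1}^N$ and the pair density bound \eqref{2-joint} of Proposition \ref{ppn:gaussian}, this is at most
\begin{align*}
\frac{\|\phi\|_\infty\|\psi\|_\infty}{N^3}\cdot N(N-1)\int_0^T\iint_{(\R^3)^2}\theta^\ep(y_1-y_2)\,|u^\ep(y_1-y_2+z)|\,C_0 e^{-(|y_1|^2+|y_2|^2)/C_0}\,dy_1dy_2\,dt.
\end{align*}

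Then change variables $w=y_1-y_2$ and integrate out $y_2$ against the Gaussian weight (which is bounded by a constant uniform in $w$), reducing the problem to bounding
\begin{align*}
\frac{C}{N}\int_{\R^3}\theta^\ep(w)\,|u^\ep(w+z)|\,dw,
\end{align*}
for some finite $C$ depending on $T, \phi, \psi, C_0$. Now for any fixed $z$ with $|z|>0$, once $\ep<|z|/4$, the support condition $|w|\le \ep$ imposed by $\theta^\ep$ gives $|w+z|\ge |z|/2 > 2\ep$, so Lemma \ref{lem:unif-bds} (estimate \eqref{unif-bd}) yields $|u^\ep(w+z)|\le 2M/|z|$. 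Together with $\int\theta^\ep(w)\,dw=1$, this gives the bound $2MC/(N|z|)$ for the entire expression.

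Finally, since $N=\rho_0/\ep\to\infty$ as $\ep\to 0$, for any fixed $z\neq 0$ this bound tends to $0$, so $\limsup_{\ep\to 0}$ of the left-hand side is zero for every fixed $z$, and the outer limit $|z|\to 0$ is trivially zero. I do not expect any real obstacle here, since the $N^{-3}$ normalization absorbs both the pair-density cardinality $N^2$ and the mild blow-up $|z|^{-1}$ of $u^\ep$ near zero; the only minor care is to take $\ep$ small compared to $|z|$ before applying the pointwise bound on $u^\ep$, which is legitimate given the order of limits $\limsup_{\ep\to 0}$ inside $\lim_{|z|\to 0}$.
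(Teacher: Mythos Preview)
Your proof is correct and follows essentially the same approach as the paper: bound absolute values inside, couple with the free system, use exchangeability and the uniform pair-density bound (Proposition~\ref{ppn:gaussian}), and then exploit that on $\mathrm{supp}\,\theta^\ep$ one has $|w+z|\gtrsim|z|>2\ep$ so that \eqref{unif-bd} gives $|u^\ep(w+z)|\lesssim|z|^{-1}$, yielding the overall bound $\ep|z|^{-1}$. The only cosmetic slip is writing $N=\rho_0/\ep$; in the abstract setting of the proof the scaling is $N\ep=1$ (equation~\eqref{abstract scaling}), but this does not affect the argument.
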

\begin{proof}
Since the limit $|z|\to0$ is taken only after $\ep\to0$, we can always assume that $|z|\ge 2\ep$. 
The pre-limit on the left-hand side of \eqref{diag-2-bis} can be bounded from above by
\begin{align*}
&\le \frac{1}{N^3}\E\int_0^T\sum_{i\in\cN(\zeta_t^N)}\sum_{j\in\cN(\zeta_t^N): j\neq i}\theta^\ep(x_i^N(t)-x_j^N(t))|u^{\ep}(x_i^N(t)-x_j^N(t)+z)||\phi(x_i^N(t))||\psi(x_j^N(t))|dt.
\end{align*}
Since the integrand and summands are nonnegative, we can further bound it above by
\begin{align*}
&\le \frac{1}{N^3}\E\int_0^T\sum_{i\neq j=1}^N\theta^\ep(y_i^\ep(t)-y_j^\ep(t))|u^{\ep}(y_i^\ep(t)-y_j^\ep(t)+z)||\phi(y^\ep_i(t))||\psi(y^\ep_j(t))|dt.
\end{align*}
Using exchangeability of the free particles, we further have that 
\begin{align*}
&\le \frac{1}{N}\E\int_0^T\theta^\ep(y_1^\ep(t)-y_2^\ep(t))|u^{\ep}(y_1^\ep(t)-y_2^\ep(t)+z)||\phi(y^\ep_1(t))||\psi(y^\ep_2(t))|dt\\
&\le \frac{1}{N}\int_0^T\iint_{(\R^3)^2}\theta^\ep(y_1-y_2)|u^{\ep}(y_1-y_2+z)|f_t^{12,\ep}(y_1,y_2)|\phi(y_1)\psi(y_2)|dy_1dy_2dt\\
&\le C_0T\frac{1}{N}\iint_{(\R^3)^2}\theta^\ep(y_1-y_2)|u^{\ep}(y_1-y_2+z)||\phi(y_1)\psi(y_2)|dy_1dy_2,
\end{align*}
where $f_t^{12,\ep}(y_1,y_2)$ denotes the joint density of $(y_1^\ep(t), y_2^\ep(t))$ which satisfies the  bound \eqref{2-joint}. Since $|y_1-y_2|\le\ep$ in the support of the function $\theta^\ep$, and $|z|\ge 2\ep$, by \eqref{unif-bd} we have 
\begin{align*}
|u^{\ep}(y_1-y_2+z)|
\lesssim |z|^{-1}.
\end{align*}
Further, $\iint_{(\R^3)^2}\theta^\ep(y_1-y_2)|\phi(y_1)\psi(y_2)|dy_1dy_2\lesssim 1$, $N=\ep^{-1}$, hence we have
\begin{align*}
\frac{1}{N}\iint_{(\R^3)^2}\theta^\ep(y_1-y_2)|u^{\ep}(y_1-y_2+z)|dy_1dy_2\lesssim \ep|z|^{-1}.
\end{align*}
This yields \eqref{diag-2-bis}.
\end{proof}

\medskip
We continue with \eqref{cov-3iag}.
\begin{proposition}
\begin{align}
&\lim_{|z|\to0}\limsup_{\ep\to0} \nonumber\\&\quad\E\Bigg|\frac{1}{N^2}\int_0^T\sum_{i\in\cN(\zeta_t^N)}\sum_{j\in\cN(\zeta_t^N): j\neq i}\nabla\cdot\big(C _\ep\big(x_i^N(t)-x_j^N(t)\big)\nabla u^{\ep}(x_i^N(t)-x_j^N(t)+z)\big)\phi(x_i^N(t))\psi(x_j^N(t))dt\Bigg|=0.\label{cov-diag-bis}
\end{align}
\end{proposition}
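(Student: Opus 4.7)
The plan is to reduce the divergence to a pure Hessian term via the structural identity \eqref{equiv-3iv}, and then exploit the compact support of $C_\ep$ together with the off-diagonal Hessian bound \eqref{unif-hess-bd}. Writing $y := x_i^N(t)-x_j^N(t)$ so that $\nabla_{x_i}=\nabla_y$, I would first expand
\[
\nabla_y\cdot\bigl(C_\ep(\xi y)\,\nabla u^\ep(y+z)\bigr)=\sum_{\alpha,\beta}\tfrac{\xi}{\ep}(\partial_\alpha C^{\alpha\beta})\bigl(\tfrac{\xi y}{\ep}\bigr)\partial_\beta u^\ep(y+z)+\mathrm{Tr}\bigl(C_\ep(\xi y)\nabla^2 u^\ep(y+z)\bigr).
\]
By \eqref{equiv-3iv}, $\sum_\alpha\partial_\alpha C^{\alpha\beta}\equiv 0$ for every $\beta$, so the dangerous $O(\ep^{-1})$ term vanishes identically and only the Hessian term $\mathrm{Tr}(C_\ep(\xi y)\nabla^2 u^\ep(y+z))$ survives.

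Next, I would use that $C_\ep(\xi y)=C(\xi y/\ep)$ is supported in $|y|\le\ep/\xi$. Since in \eqref{cov-diag-bis} one sends $\ep\to 0$ first with $|z|$ fixed, eventually $\ep/\xi\le|z|/2$, so on the support of $C_\ep(\xi y)$ the shifted argument satisfies $|y+z|\ge |z|-\ep/\xi\ge |z|/2\ge 2\ep$. This is exactly the regime in which \eqref{unif-hess-bd} applies and gives $|\nabla^2 u^\ep(y+z)|\lesssim|z|^{-3}$. Hence the integrand is pointwise bounded by $\|C(\xi y/\ep)\|_\infty\,|z|^{-3}$ up to constants depending on $\|\phi\|_\infty,\|\psi\|_\infty$.

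Finally, following the same route as in the proof of \eqref{diag-2-bis}, I would couple the true system to the auxiliary free system \eqref{free-sys}, extend the sum from $\cN(\zeta_t^N)$ to all $i\ne j\in\{1,\dots,N\}$ (the summands are nonnegative in absolute value), use exchangeability to reduce to a single pair $(y_1^\ep(t),y_2^\ep(t))$, and invoke the Gaussian pair-density bound \eqref{2-joint}. Absorbing the $1/N^2$ prefactor against the $N(N-1)$ terms, this would give
\[
\text{pre-limit in \eqref{cov-diag-bis}}\ \lesssim\ |z|^{-3}\iint_{(\R^3)^2}\bigl\|C\bigl(\tfrac{\xi(y_1-y_2)}{\ep}\bigr)\bigr\|_\infty\,e^{-(|y_1|^2+|y_2|^2)/C_0}\,dy_1\,dy_2.
\]
The change of variables $w=y_1-y_2$, followed by $w'=\xi w/\ep$ inside the integral of $\|C(\cdot)\|_\infty$ (finite since $C$ is smooth and compactly supported), extracts a factor $(\ep/\xi)^3$, while the remaining Gaussian integral in $y_2$ is bounded uniformly in $w$. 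The overall bound is therefore $O(|z|^{-3}\ep^3)$, which already tends to $0$ as $\ep\to 0$ with $z$ fixed, so the outer limit $|z|\to 0$ is automatic.

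I do not expect a serious obstacle here: the structure mirrors \eqref{diag-2-bis}, with $\theta^\ep$ replaced by $\|C_\ep\|_\infty$ and the bound $|u^\ep(y+z)|\lesssim|z|^{-1}$ replaced by $|\nabla^2 u^\ep(y+z)|\lesssim|z|^{-3}$; the one point requiring care is the cancellation of the $O(\ep^{-1})$ term in the expansion of the divergence, which is precisely what \eqref{equiv-3iv} delivers.
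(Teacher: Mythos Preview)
Your proposal is correct and follows essentially the same approach as the paper's proof: both use \eqref{equiv-3iv} to reduce the divergence to a pure Hessian term, then exploit the compact support of $C_\ep$ together with the bound \eqref{unif-hess-bd} to get $|\nabla^2 u^\ep(y+z)|\lesssim |z|^{-3}$ on that support, and finally pass to the auxiliary free system via exchangeability and Proposition~\ref{ppn:gaussian} to extract the factor $\ep^3$ from the change of variables, yielding the same final estimate $O(\ep^3|z|^{-3})$.
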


\begin{proof}
Since the limit $|z|\to0$ is taken only after $\ep\to0$, we can always assume that $|z|\ge 2\ep$. In view of \eqref{equiv-3iv}, the pre-limit on the left-hand side of \eqref{cov-3iag-bis} can be bounded from above as follows 
\begin{align*}
&=\E\Bigg|\frac{1}{N^2}\int_0^T\sum_{i\in\cN(\zeta_t^N)}\sum_{j\in\cN(\zeta_t^N): j\neq i}\sum_{\alpha,\beta=1}^3C _\ep^{\alpha\beta}\big(x_i^N(t)-x_j^N(t)\big)\partial^2_{\alpha\beta} u^{\ep}(x_i^N(t)-x_j^N(t)+z)\phi(x_i^N(t))\psi(x_j^N(t))dt\Bigg|\\
&\le\frac{1}{N^2}\E\int_0^T\sum_{i\in\cN(\zeta_t^N)}\sum_{j\in\cN(\zeta_t^N): j\neq i}\sum_{\alpha,\beta=1}^3|C _\ep^{\alpha\beta}\big(x_i^N(t)-x_j^N(t)\big)||\partial^2_{\alpha \beta}u^{\ep}(x_i^N(t)-x_j^N(t)+z)||\phi(x_i^N(t))\psi(x_j^N(t))|dt\\
&\le \frac{1}{N^2}\E\int_0^T\sum_{i\neq j=1}^N\sum_{\alpha,\beta=1}^3|C _\ep^{\alpha\beta}\big(y_i^\ep(t)-y_j^\ep(t)\big)||\partial^2_{\alpha \beta}u^{\ep}(y_i^\ep(t)-y_j^\ep(t)+z)||\phi(y_i^\ep(t))\psi(y_j^\ep(t))|dt\\
&\le \E\int_0^T\sum_{\alpha,\beta=1}^3|C _\ep^{\alpha\beta}\big(y_1^\ep(t)-y_2^\ep(t)\big)||\partial^2_{\alpha \beta}u^{\ep}(y_1^\ep(t)-y_2^\ep(t)+z)||\phi(y_1^\ep(t))\psi(y_2^\ep(t))|dt\\
&\le C_0T\iint_{(\R^3)^2}\sum_{\alpha,\beta=1}^3|C _\ep^{\alpha\beta}\big(y_1-y_2\big)||\partial^2_{\alpha \beta}u^{\ep}(y_1-y_2+z)||\phi(y_1)\psi(y_2)|dy_1dy_2,
\end{align*}
using the auxiliary free system, its exchangeability, and that $(y^\ep_1(t), y^\ep_2 (t))$ has a uniformly bounded density, see Proposition \ref{ppn:gaussian}. Since we have assumed that $C _\ep(x)=C (x/\ep)$ for some matrix function $C$ with compact support in $B(0,1)\subset\R^3$, we have $|y_1-y_2|\le\ep$ in the support of $C _\ep^{\alpha\beta}$ in the above display. By  \eqref{unif-hess-bd} and since $|z| \ge 2\ep$, we have 
\begin{align*}
|\partial^2_{\alpha \beta}u^{\ep}(y_1-y_2+z)|&\lesssim |z|^{-3}, \quad \alpha,\beta=1,2,3.
\end{align*}
Thus, we have 
\begin{align*}
&\iint_{(\R^3)^2}\sum_{\alpha,\beta=1}^3|\partial^2_{\alpha \beta}u^{\ep}(y_1-y_2+z)||C _\ep^{\alpha\beta}\big(y_1-y_2\big)||\phi(y_1)\psi(y_2)|dy_1dy_2\\ 
&\lesssim |z|^{-3}\iint\sum_{\alpha,\beta=1}^3|C _\ep^{\alpha\beta}(y)||\phi(y+y_2)\psi(y_2)|dydy_2\lesssim |z|^{-3}\int\sum_{\alpha,\beta=1}^3|C _\ep^{\alpha\beta}(y)|dy\\
&=\ep^3|z|^{-3}\int\sum_{\alpha,\beta=1}^3|C ^{\alpha\beta}(y)|dy,
\end{align*}
where the factor $\ep^3$ comes from change of variables.
This yields \eqref{cov-diag-bis} since the function $C $ is smooth and compactly supported.
\end{proof}

\medskip
We continue with \eqref{2nd-test}. There are two terms with similar structure, and it suffices to consider one of them.
\begin{proposition}
\begin{align}
&\lim_{|z|\to0}\limsup_{\ep\to0}\nonumber\\&\quad \E\Bigg|\frac{1}{N^2}\int_0^T\sum_{i\in\cN(\zeta_t^N)}\sum_{j\in\cN(\zeta_t^N): j\neq i}\big[u^{\ep}(x_i^N(t)-x_j^N(t)+z)-u^{\ep}(x_i^N(t)-x_j^N(t))\big]\Delta \phi(x_i^N(t))\psi(x_j^N(t))dt \Bigg|=0.\label{2nd-test-bis}
\end{align}
\end{proposition}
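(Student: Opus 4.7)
The plan is to mirror the strategy used for \eqref{diag-2-bis} and \eqref{cov-diag-bis}: first pass to the auxiliary free system via the canonical coupling (dropping the life-time constraint only enlarges the non-negative integrand), invoke exchangeability to reduce the double sum to a single pair $(y_1^\ep(t), y_2^\ep(t))$, and then dominate the resulting expectation by the uniform Gaussian bound on $f_t^{12,\ep}$ from Proposition \ref{ppn:gaussian}. After this reduction, the pre-limit in \eqref{2nd-test-bis} will be bounded above by a constant times
\begin{align*}
\iint_{(\R^3)^2} |u^\ep(y_1-y_2+z)-u^\ep(y_1-y_2)|\,|\Delta\phi(y_1)|\,|\psi(y_2)|\,dy_1dy_2.
\end{align*}
Changing variables to $y=y_1-y_2$ and using that both $\Delta\phi$ and $\psi$ are compactly supported, this is further bounded by $C\int_K |u^\ep(y+z)-u^\ep(y)|dy$ for some fixed compact $K\subset\R^3$ depending only on the supports of $\phi,\psi$.

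The central step is then to show $\int_K |u^\ep(y+z)-u^\ep(y)|dy\to 0$ as $\ep\to 0$ followed by $|z|\to 0$. I would split $K$ into the near region $\{|y|\le 2|z|+2\ep\}$ and the far region $\{|y|>2|z|+2\ep\}$. On the far region, the difference estimate \eqref{diff-est} applies, giving $|u^\ep(y+z)-u^\ep(y)|\le M|z||y|^{-2}$, which is integrable on $K$ in $\R^3$ (since $\int_{|y|\le R}|y|^{-2}dy\lesssim R$), so this piece is $O(|z|)$ and vanishes as $|z|\to 0$. On the near region, I use the triangle inequality together with the pointwise bound \eqref{unif-bd} to write $|u^\ep(y+z)-u^\ep(y)|\le M(|y|\vee\ep)^{-1}+M(|y+z|\vee\ep)^{-1}$; integrating each term over a ball of radius $O(|z|+\ep)$ and splitting at radius $\ep$ yields a bound of order $(|z|+\ep)^2$, which tends to $0$ first as $\ep\to 0$ and then as $|z|\to 0$.

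The potential obstacle is confirming that the iterated limit actually kills the near-region contribution: a naive use of the crude bound $|u^\ep|\lesssim\ep^{-1}$ multiplied by the volume $(|z|+\ep)^3$ would give a term of order $(|z|+\ep)^3/\ep$ which blows up, so it is essential to exploit the sharper $(|y|\vee\ep)^{-1}$ behaviour from Lemma \ref{lem:unif-bds} in order to gain the extra factor of $\ep$ required for convergence. Once that is done, the second summand of \eqref{2nd-test} (with $\phi,\psi$ swapped and $\Delta\psi$ in place of $\Delta\phi$) is handled verbatim by the same decomposition, completing the proof of \eqref{2nd-test-bis}.
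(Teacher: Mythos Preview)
Your proposal is correct and follows essentially the same approach as the paper's own proof: coupling with the free system, exchangeability, the uniform density bound of Proposition~\ref{ppn:gaussian}, reduction to $\int_{\K}|u^\ep(y+z)-u^\ep(y)|\,dy$, and then the same near/far decomposition using \eqref{unif-bd} and \eqref{diff-est} to obtain a bound of order $(|z|+\ep)^2+|z|$. Your remark about why the crude $\ep^{-1}$ bound would be insufficient is also spot on.
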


\begin{proof}
The pre-limit on the left-hand side of \eqref{2nd-test-bis} can be bounded from above by
\begin{align*}
&\le \frac{1}{N^2}\E\int_0^T\sum_{i\in\cN(\zeta_t^N)}\sum_{j\in\cN(\zeta_t^N): j\neq i}\big|u^{\ep}(x_i^N(t)-x_j^N(t)+z)-u^{\ep}(x_i^N(t)-x_j^N(t))\big||\Delta \phi(x_i^N(t))\psi(x_j^N(t))|dt\\
&\le \frac{1}{N^2}\E\int_0^T\sum_{i\neq j=1}^N\big|u^{\ep}(y_i^\ep(t)-y_j^\ep(t)+z)-u^{\ep}(y_i^\ep(t)-y_j^\ep(t))\big||\Delta \phi(y_i^\ep(t))\psi(y_j^\ep(t))|dt\\
&\le \E\int_0^T\big|u^{\ep}(y_1^\ep(t)-y_2^\ep(t)+z)-u^{\ep}(y_1^\ep(t)-y_2^\ep(t))\big||\Delta \phi(y_1^\ep(t))\psi(y_2^\ep(t))|dt\\
&\le C_0T\iint_{(\R^3)^2}\big|u^{\ep}(y_1-y_2+z)-u^{\ep}(y_1-y_2)\big||\Delta \phi(y_1)\psi(y_2)|dy_1dy_2\\
&=C_0T\iint_{(\R^3)^2}\big|u^{\ep}(y+z)-u^{\ep}(y)\big||\Delta \phi(y+y_2)\psi(y_2)|dydy_2,
\end{align*}
using the auxiliary free system, its exchangeability, and that $(y^\ep_1(t), y^\ep_2 (t))$ has a uniformly bounded density, see Proposition \ref{ppn:gaussian}. By \eqref{diff-est} and \eqref{unif-bd}, we have 
\begin{align*}
\big|u^{\ep}(y+z)-u^{\ep}(y)\big|\lesssim \begin{cases}
|z||y|^{-2}, \quad \text{if }|y|\ge 2|z|+2\ep,\\[5pt]
(\ep\vee|y+z|)^{-1}+(\ep\vee|y|)^{-1}, \quad\text{for all }y. \end{cases}
\end{align*}
Note that for any $y_2\in\text{supp}(\psi)$, $y+y_2\in\text{supp}(\phi)$, we have $y\in \mathbb K$ for some compact set $\K$ which is determined by the supports of $\phi,\psi$. Hence, bounding $|\Delta \phi(y+y_2)|\le\|\phi\|_{C^2}$, we have
\begin{align}\label{interm-1}
&\int_{\K}\big|u^{\ep}(y+z)-u^{\ep}(y)\big| dy \nonumber\\
&\lesssim \int_{B(0,2|z|+2\ep)}(\ep\vee|y+z|)^{-1}dy+\int_{B(0,2|z|+2\ep)}(\ep\vee|y|)^{-1}dy+\int_{\K\backslash B(0,2|z|+2\ep)}|z||y|^{-2}dy \nonumber\\
&\le 2\int_{B(0,3|z|+2\ep)}(\ep\vee|y|)^{-1}dy+\int_{\K\backslash B(0,2|z|+2\ep)}|z||y|^{-2}dy\nonumber\\
&\lesssim(3|z|+2\ep)^2+|z|.
\end{align}
Then we integrate in $|\psi(y_2)|dy_2$ which is finite. This yields \eqref{2nd-test-bis}.
\end{proof}

\begin{remark}
The same proof as above also yields the negligibility of the initial $F_3(\eta_0^N)$ and terminal $F_3(\eta_T^N)$ terms in \eqref{ini} individually, since each of them also involve the difference of $u^\ep$ and the heat kernel estimates are uniform in $[0,T]$. Thus we omit discussing them.
\end{remark}

We continue with \eqref{cross-2}. There are two terms with similar structure, and it suffices to consider one of them.
\begin{proposition}
\begin{align}
&\lim_{|z|\to0}\limsup_{\ep\to0}\nonumber\\ &\quad \E\Bigg|\frac{1}{N^2}\int_0^T\sum_{i\in\cN(\zeta_t^N)}\sum_{j\in\cN(\zeta_t^N): j\neq i}\big[\nabla u^{\ep}(x_i^N(t)-x_j^N(t)+z)-\nabla u^{\ep}(x_i^N(t)-x_j^N(t))\big]\cdot\nabla \phi(x_i^N(t))\psi(x_j^N(t))dt\Bigg|=0.\label{cross-2-bis}
\end{align}
\end{proposition}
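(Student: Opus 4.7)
The plan is to mirror verbatim the argument used for \eqref{2nd-test-bis}, replacing the difference $u^\ep(\cdot+z)-u^\ep(\cdot)$ by the gradient difference $\nabla u^\ep(\cdot+z)-\nabla u^\ep(\cdot)$. Concretely, I first use the coupling with the free system \eqref{free-sys}, under which every active true-particle trajectory coincides with its free counterpart, to dominate the absolute value of the double sum by the same expression written in terms of $(y_i^\ep)_{i=1}^N$ and summed over all $i\neq j\in\{1,\dots,N\}$. Exchangeability then reduces the expectation to the single pair $(y_1^\ep, y_2^\ep)$, and the uniform Gaussian pair-density estimate \eqref{2-joint} from Proposition \ref{ppn:gaussian} converts the whole thing to the deterministic bound
$$
C_0 T \iint_{(\R^3)^2} \bigl|\nabla u^\ep(y_1-y_2+z) - \nabla u^\ep(y_1-y_2)\bigr|\, |\nabla\phi(y_1)\psi(y_2)|\, dy_1\, dy_2.
$$

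The change of variables $y=y_1-y_2$ combined with the compact supports of $\phi,\psi$ confines $y$ to a fixed compact set $\K\subset\R^3$, while the $y_2$-integral contributes at most $\|\psi\|_{L^1}$. I then split the $y$-integral at the radius $|y|=2|z|+2\ep$. On the outer piece I invoke \eqref{diff-est-grad} to get $|\nabla u^\ep(y+z)-\nabla u^\ep(y)|\lesssim |z||y|^{-3}$, whose integral over $\K\setminus B(0,2|z|+2\ep)$ is of order $|z|\log\bigl(R_\K/(|z|+\ep)\bigr)$ in three dimensions. On the inner ball I use the triangle inequality together with the pointwise bound \eqref{unif-grad-bd} to get $|\nabla u^\ep(y+z)-\nabla u^\ep(y)| \lesssim (\ep\vee|y|)^{-2}+(\ep\vee|y+z|)^{-2}$, whose integral over $B(0,3|z|+2\ep)$ works out, in spherical coordinates (splitting at $r=\ep$), to $O(|z|+\ep)$. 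Adding the two contributions, the bound is of the form $|z|+\ep+|z|\log\bigl(R_\K/(|z|+\ep)\bigr)$, which vanishes first as $\ep\to 0$ and then as $|z|\to 0$.

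The main (borderline) point is the logarithmic divergence of $\int_{B(0,R)\setminus\{0\}}|y|^{-3}\,dy$ at the origin, which is precisely tamed by the prefactor $|z|$ furnished by \eqref{diff-est-grad}; this is the one quantitative input that essentially uses the \emph{difference} of gradients, rather than naively bounding $|\nabla^2 u^\ep|$. The second term in \eqref{cross-2} (with $\nabla\psi$ replacing $\nabla\phi$, and the pair $(i,j)$ swapped) is handled identically after interchanging the roles of $\phi$ and $\psi$, so \eqref{cross-2-bis} suffices.
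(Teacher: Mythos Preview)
Your proof is correct and follows the same overall approach as the paper: coupling to the free system, exchangeability, the uniform pair-density bound from Proposition~\ref{ppn:gaussian}, and then the two pointwise estimates \eqref{diff-est-grad} and \eqref{unif-grad-bd} on a compact set $\K$. The only difference is in how the outer integral $\int_{\K\setminus B(0,\rho)}|z||y|^{-3}\,dy$ is handled: you fix the natural threshold $\rho=2|z|+2\ep$ and evaluate it directly as $O\bigl(|z|\log(R_\K/(|z|+\ep))\bigr)$, whereas the paper introduces a free cutoff $r>2|z|+2\ep$, bounds $|y|^{-3}\le r^{-3}$ crudely to get $|z|r^{-3}|\K|$, and then optimizes by choosing $r=4|z|^{1/4}$ to arrive at the rate $|z|^{1/4}$. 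Your direct computation is slightly sharper and avoids the parameter trick; both routes give the claim.
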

 
\begin{proof}
The pre-limit on the left-hand side of \eqref{cross-2-bis} can be bounded from above by
\begin{align*}
&\le \frac{1}{N^2}\E\int_0^T\sum_{i\in\cN(\zeta_t^N)}\sum_{j\in\cN(\zeta_t^N): j\neq i}\big|\nabla u^{\ep}(x_i^N(t)-x_j^N(t)+z)-\nabla u^{\ep}(x_i^N(t)-x_j^N(t))\big||\nabla \phi(x_i^N(t))\psi(x_j^N(t))|dt\\
&\le \frac{1}{N^2}\E\int_0^T\sum_{i\neq j =1}^N\big|\nabla u^{\ep}(y_i^\ep(t)-y_j^\ep(t)+z)-\nabla u^{\ep}(y_i^\ep(t)-y_j^\ep(t))\big||\nabla \phi(y_i^\ep(t))\psi(y_j^\ep(t))|dt\\
&\le\E\int_0^T\big|\nabla u^{\ep}(y_1^\ep(t)-y_2^\ep(t)+z)-\nabla u^{\ep}(y_1^\ep(t)-y_2^\ep(t))\big||\nabla \phi(y_1^\ep(t))\psi(y_2^\ep(t))|dt\\
&\le C_0T\iint_{(\R^3)^2}\big|\nabla u^{\ep}(y_1-y_2+z)-\nabla u^{\ep}(y_1-y_2)\big||\nabla \phi(y_1)\psi(y_2)|dy_1dy_2\\
&=C_0T\iint_{(\R^3)^2}\big|\nabla u^{\ep}(y+z)-\nabla u^{\ep}(y)\big||\nabla \phi(y+y_2)\psi(y_2)|dy,
\end{align*}
using the auxiliary free system, its exchangeability, and that $(y^\ep_1(t), y^\ep_2 (t))$ has a uniformly bounded density, see Proposition \ref{ppn:gaussian}. By \eqref{diff-est-grad} and \eqref{unif-grad-bd}, we have 
\begin{align*}
\big|\nabla u^{\ep}(y+z)-\nabla u^{\ep}(y)\big|\lesssim \begin{cases}
|z||y|^{-3}, \quad \text{if }|y|\ge 2|z|+2\ep,\\[5pt]
(\ep\vee|y+z|)^{-2}+(\ep\vee|y|)^{-2}, \quad\text{for all }y.\end{cases}
\end{align*}
Hence, for some $r>2|z|+2\ep$ to be chosen and a compact set $\K$ determined by the supports of $\phi,\psi$, we have
\begin{align*}
&\int_{\K}\big|\nabla u^{\ep}(y+z)-\nabla u^{\ep}(y)\big||\nabla \phi(y+y_2)|dy\\
&\lesssim \int_{B(0,r)}(\ep\vee|y+z|)^{-2}dy+\int_{B(0,r)}(\ep\vee|y|)^{-2}dy+\int_{\K\backslash B(0,r)}|z||y|^{-3}dy\\
&\le 2\int_{B(0,|z|+r)}(\ep\vee|y|)^{-2}dy+\int_{\K\backslash B(0,r)}|z||y|^{-3}dy\\
&\lesssim (|z|+r)+|z|r^{-3}|\K|.
\end{align*}
Choosing $r=4|z|^{\frac{1}{4}}>2|z|+2\ep$ (we can always assume $2\ep\le |z|\le1/2$) yields
\begin{align}\label{triple-int}
\int_{\K}\big|\nabla u^{\ep}(y+z)-\nabla u^{\ep}(y)\big||\nabla \phi(y+y_2)|dy\lesssim |z|^{\frac{1}{4}}.
\end{align}
Then we integrate $|\psi(y_2)|dy_2$ which is finite. This yields \eqref{cross-2-bis}.
\end{proof}

\begin{remark}
The same proof for \eqref{cross-2-bis} can also yield the negligibility of the terms \eqref{cross} and \eqref{cross-11}, since one can simply bound the covariance terms $\max_{\alpha,\beta=1}^3|C^{\alpha,\beta}_\ep(\xi(x_i^N(t)-x_j^N(t))|\le \|C\|_{L^\infty}:=\max_{\alpha,\beta=1}^3 \|C^{\alpha,\beta}\|_{L^\infty(\R^3)}$ and then they are in the form of an averaged double sum involving the difference of $\nabla u^\ep$. It is not necessary to take advantage of the smallness given by the support of $C_\ep$. Similarly, the proof given to prove \eqref{2nd-test-bis} can yield the negligibility of the term \eqref{cross-12}, since it also involves the difference of $u^\ep$, and one can bound $\max_{\alpha,\beta=1}^3|C^{\alpha,\beta}_\ep(\xi(x_i^N(t)-x_j^N(t))|\le \|C\|_{L^\infty}$.
\end{remark}
We continue with \eqref{coag-off}. Although there are two terms there, they can be bounded in the same way, hence it suffices to consider one of them.
\begin{proposition}
\begin{align}\label{coag-off-bis}
\lim_{|z|\to0}\limsup_{\ep\to0}\E\Bigg|\frac{1}{N^3}\int_0^T&\sum_{i,j,k\in\cN(\zeta_t^N): j\neq i, k\neq i,j}\theta^\ep(x_i^N(t)-x_j^N(t))  \nonumber\\
&\cdot\big[u^{\ep}(x_i^N(t)-x_k^N(t)+z)-u^{\ep}(x_i^N(t)-x_k^N(t))\big]\phi(x_i^N(t))\psi(x_k^N(t))dt\Bigg|=0.
\end{align}
\end{proposition}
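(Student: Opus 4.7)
\noindent\textbf{Proof plan for \eqref{coag-off-bis}.} The plan is to mimic the structure used for \eqref{diag-2-bis}--\eqref{cross-2-bis}: dominate the true triple sum by a triple sum over the free system, exploit exchangeability to reduce to the joint density of just three tagged free particles, factor out the mass of $\theta^\ep$ by integrating over the corresponding variable, and then recognize what is left as the estimate already carried out for \eqref{2nd-test-bis}.

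First, I will move the absolute value inside the sum and time integral (all factors are signed functions, so this only loses signs, not size). Next, under the natural coupling, whenever $i,j,k\in\cN(\zeta_t^N)$ we have $x_i^N(t)=y_i^\ep(t)$, $x_j^N(t)=y_j^\ep(t)$, $x_k^N(t)=y_k^\ep(t)$, and extending the summation from $\cN(\zeta_t^N)$ to all indices in $\{1,\ldots,N\}$ with $i,j,k$ distinct only increases the (nonnegative) summand. Using exchangeability of the free system and bounding the number of ordered triples by $N^3$, the pre-limit is bounded by
\begin{align*}
\E\int_0^T \theta^\ep(y_1^\ep(t)-y_2^\ep(t))\,\bigl|u^\ep(y_1^\ep(t)-y_3^\ep(t)+z)-u^\ep(y_1^\ep(t)-y_3^\ep(t))\bigr|\,|\phi(y_1^\ep(t))\psi(y_3^\ep(t))|\,dt.
\end{align*}
By Proposition \ref{ppn:gaussian} applied with $\ell=3$, the joint density of $(y_1^\ep(t),y_2^\ep(t),y_3^\ep(t))$ is dominated by $C_0 e^{-(|y_1|^2+|y_2|^2+|y_3|^2)/C_0}$ uniformly in $t\in[0,T]$ and $\ep$, so the displayed expectation is at most
\begin{align*}
C_0 T\iiint_{(\R^3)^3}\theta^\ep(y_1-y_2)\,\bigl|u^\ep(y_1-y_3+z)-u^\ep(y_1-y_3)\bigr|\,|\phi(y_1)\psi(y_3)|\,dy_1 dy_2 dy_3.
\end{align*}

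The crucial simplification is that the $y_2$-dependence is only through $\theta^\ep(y_1-y_2)$, and $\int_{\R^3}\theta^\ep(y_1-y_2)\,dy_2=1$. Performing this integration and then the change of variable $y=y_1-y_3$ reduces the triple integral to
\begin{align*}
C_0 T\iint_{(\R^3)^2}\bigl|u^\ep(y+z)-u^\ep(y)\bigr|\,|\phi(y+y_3)\psi(y_3)|\,dy\,dy_3,
\end{align*}
which is exactly the integral already estimated in the proof of \eqref{2nd-test-bis}. Specifically, the inner $y$-integral is controlled by \eqref{interm-1}, giving an $O\bigl((|z|+\ep)^2+|z|\bigr)$ bound that is uniform in $y_3$ inside a compact set determined by the supports of $\phi$ and $\psi$, after which integration against $|\psi(y_3)|\,dy_3$ is finite. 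Letting first $\ep\to 0$ and then $|z|\to 0$ yields \eqref{coag-off-bis}. No new obstacle arises; the only conceptual point worth flagging is that the triple sum, together with the factor $N^{-3}$, is precisely of the right order to absorb both the localizing mass of $\theta^\ep$ (one particle) and the bounded density of the remaining two particles, without needing the finer pointwise bounds on $\nabla u^\ep$ or $\nabla^2 u^\ep$.
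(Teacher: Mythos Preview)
Your proposal is correct and follows essentially the same approach as the paper: dominate by the free system via coupling, use exchangeability to reduce to three tagged particles, invoke the uniform joint-density bound from Proposition~\ref{ppn:gaussian} with $\ell=3$, integrate out the variable paired with $\theta^\ep$ using $\int\theta^\ep=1$, and then reduce to the integral already controlled in \eqref{interm-1}. The only difference is cosmetic (you label the $\theta^\ep$-variable as $y_2$ whereas the paper labels it $y_3$), which is immaterial by exchangeability.
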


\begin{proof}
The pre-limit on the left-hand side of \eqref{coag-off-bis} can be bounded from above by
\begin{align*}
& \frac{1}{N^3}\E\int_0^T\sum_{i,j,k\in\cN(\zeta_t^N): j\neq i, k\neq i,j}\theta^\ep(x_i^N(t)-x_j^N(t))\big|u^{\ep}(x_i^N(t)-x_k^N(t)+z)-u^{\ep}(x_i^N(t)-x_k^N(t))\big||\phi(x_i^N(t))\psi(x_k^N(t))|dt\\
&\le\frac{1}{N^3}\E\int_0^T\sum_{i,j,k=1}^N1_{\{j\neq i, k\neq i,j\}}\theta^\ep(y_i^\ep(t)-y_j^\ep(t))\big|u^{\ep}(y_i^\ep(t)-y_k^\ep(t)+z)-u^{\ep}(y_i^\ep(t)-y_k^\ep(t))\big||\phi(y_i^\ep(t))\psi(y_k^\ep(t))|dt\\
&\le \E\int_0^T\theta^\ep(y_1^N(t)-y_3^N(t))\big|u^{\ep}(y_1^\ep(t)-y_2^\ep(t)+z)-u^{\ep}(y_1^\ep(t)-y_2^\ep(t))\big||\phi(y_1^\ep(t))\psi(y_2^\ep(t))|dt,
\end{align*}
using the auxiliary free system and its exchangeability. Since the triple $(y_1^\ep(t), y_2^\ep(t), y_3^\ep(t))$ has a joint density $f_t^{123, \ep}(y_1,y_2,y_3)$ that satisfies the bound
\eqref{2-joint},
we can further bound the previous display by
\begin{align*}
&=\int_0^T\iiint_{(\R^3)^3}\theta^\ep(y_1-y_3)\big|u^{\ep}(y_1-y_2+z)-u^{\ep}(y_1-y_2)\big||\phi(y_1)\psi(y_2)|f_t^{123,\ep}(y_1,y_2,y_3)dy_1dy_2dy_3dt\\
&\le C_0T\iiint_{(\R^3)^3}\theta^\ep(y_1-y_3)\big|u^{\ep}(y_1-y_2+z)-u^{\ep}(y_1-y_2)\big||\phi(y_1)\psi(y_2)|dy_1dy_2dy_3\\
&=C_0T\iint_{(\R^3)^2}\big|u^{\ep}(y_1-y_2+z)-u^{\ep}(y_1-y_2)\big||\phi(y_1)\psi(y_2)|dy_1dy_2\\
&=C_0T\iint_{(\R^3)^2}\big|u^{\ep}(y+z)-u^{\ep}(y)\big||\phi(y+y_2)\psi(y_2)|dy,
\end{align*}
where $\int\theta^\ep(y_1-y_3)dy_3=1$ is used. The last expression is bounded by $\lesssim \ep^2+(3|z|+2\ep)^2+|z|$ as already analyzed in \eqref{interm-1}. This yields \eqref{coag-off-bis}.
\end{proof}

\medskip
We continue with \eqref{qv-3iff}. By the elementary inequality $(a+b+c)^2\le 3(a^2+b^2+c^2)$, it can be seen that it suffices to control two type of terms, which are treated in the next two propositions. Recall that $v^{\ep,z}$ is a shorthand \eqref{def-v}.
\begin{proposition}\label{ppn:cov-sum}
\begin{align}
\lim_{|z|\to0}\limsup_{\ep\to0}\frac{1}{N^4}\int_0^T\sum_{k\in  \N}\E\Bigg|\sum_{i\in\cN(\zeta_t^N)}\sum_{j\in\cN(\zeta_t^N): j\neq i}v^{\ep,z}(x_i^N(t)-x_j^N(t))\nabla\phi(x_i^N(t))\cdot \lambda_ke_k^\ep(x_i^N(t))\psi(x_j^N(t))\Bigg|^2dt=0.\label{qv-3iff-bis}
\end{align}
\end{proposition}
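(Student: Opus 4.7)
The plan is to expand the square, use the covariance identity, and then exploit the localization of $C_\ep$ combined with bounds on $v^{\ep,z}$. From the spectral representation \eqref{covariance} and $e_k^\ep(\cdot)=e_k(\cdot/\ep)$, one has $\sum_{k\in\N}\lambda_k^2\, e_k^\ep(\xi x)\otimes e_k^\ep(\xi y)=C_\ep(\xi(x-y))$, so expanding the modulus squared and summing in $k$, the pre-limit on the left-hand side of \eqref{qv-3iff-bis} equals
\begin{align*}
\frac{1}{N^4}\int_0^T\E\sum_{\substack{i\neq j\\ i'\neq j'}}v^{\ep,z}(x_i^N{-}x_j^N)v^{\ep,z}(x_{i'}^N{-}x_{j'}^N)\,\nabla\phi(x_i^N)^T C_\ep(\xi(x_i^N{-}x_{i'}^N))\nabla\phi(x_{i'}^N)\,\psi(x_j^N)\psi(x_{j'}^N)\,dt.
\end{align*}
Since $C$ is smooth and compactly supported in a ball of radius $r_C$, the kernel $C_\ep(\xi(x_i{-}x_{i'}))$ is bounded entrywise by $\|C\|_{L^\infty}$ and supported in $\{|x_i{-}x_{i'}|\le \ep r_C/\xi\}$; this localization supplies the key factor $\ep^3$.

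Next I pass to the free particle system via the natural coupling (after replacing all entries by their absolute values the summand becomes non-negative), use exchangeability, and split the quadruples $(i,j,i',j')$ into the generic case with all four indices distinct ($\le N^4$ terms) and degenerate cases where at least two indices coincide ($\le CN^3$ terms each). For the generic case, Proposition \ref{ppn:gaussian} with $\ell=4$ bounds the joint density by $C_0 e^{-(|y_1|^2+\ldots+|y_4|^2)/C_0}$; retaining only a single exponential factor, the generic contribution is dominated by
\begin{align*}
\iiiint_{(\R^3)^4}|v^{\ep,z}(y_1{-}y_2)||v^{\ep,z}(y_3{-}y_4)|\,\mathbf{1}_{\{|y_1-y_3|\le \ep r_C/\xi\}}\,|\psi(y_2)\psi(y_4)|\,e^{-|y_1|^2/C_0}\,dy_1 dy_2 dy_3 dy_4.
\end{align*}
Setting $H(y):=\int_{\R^3}|v^{\ep,z}(y{-}y')||\psi(y')|\,dy'$ and applying Fubini, this equals $\int_{\R^3} e^{-|y_1|^2/C_0}H(y_1)\bigl(\int_{|y_3-y_1|\le \ep r_C/\xi}H(y_3)\,dy_3\bigr)dy_1\lesssim \ep^3\|H\|_{L^\infty}^2$. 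The estimate \eqref{interm-1}, combined with the far-field decay $|v^{\ep,z}(x)|\lesssim |z|/|x|^2$ from \eqref{diff-est} applied when $y$ is far from $\mathrm{supp}(\psi)$, yields $\|H\|_{L^\infty(\R^3)}\lesssim (|z|+\ep)^2+|z|$, so the generic case contributes $O(\ep^3[(|z|+\ep)^2+|z|]^2)$, which vanishes as $\ep\to 0$ and then $|z|\to 0$.

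For the degenerate cases at most three distinct free particles appear; using the 3-particle density bound from Proposition \ref{ppn:gaussian} the integrands are controlled by products of copies of $H$ integrated against compactly supported factors, and the reduced index count provides an extra factor $N^{-1}=\ep$. For instance when $i=i'$ one loses the $\ep^3$ gain (since $C_\ep(0)=I_3$), but the contribution is $\lesssim \ep\,\|\phi\|_{C^1}^2|\mathrm{supp}(\nabla\phi)|\,\|H\|_{L^\infty}^2\to 0$; other coincidence patterns such as $j=j'$ or $i=j'$ retain the $\ep^3$ localization and are even smaller. The main obstacle is the uniform smallness of $\|H\|_{L^\infty}$: on the singular set of $v^{\ep,z}$ one has $|v^{\ep,z}|\lesssim\ep^{-1}$ by \eqref{unif-bd}, but this set has measure $O((\ep+|z|)^3)$; away from it, the sharper bound $|v^{\ep,z}(x)|\lesssim|z|/|x|^2$ from \eqref{diff-est} dominates and integrates against compactly-supported $\psi$ to yield an $O(|z|)$ contribution. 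Balancing these two regimes exactly as in \eqref{interm-1} produces the vanishing bound that closes the argument.
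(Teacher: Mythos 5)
Your proof is correct and follows the same overall strategy as the paper: expand the square via the spectral decomposition to recover $C_\ep$, exploit the compact support of $C$ to extract an $\ep^3$ factor, pass to the free system with exchangeability and Proposition~\ref{ppn:gaussian}, and split into a dominant case and a diagonal case controlled by the $L^1$-smallness of $v^{\ep,z}$ as in \eqref{interm-1}. The two presentations differ in two organizational respects. First, the paper splits by $i=j$ vs.\ $i\neq j$ (the two indices entering $C_\ep$) and lets the remaining indices coincide freely under exchangeability, while you split by ``all four indices distinct'' vs.\ ``at least one coincidence''; the latter is finer and forces you to dispatch a few more sub-cases. Second, the paper bounds one factor of $v^{\ep,z}$ pointwise by $M\ep^{-1}$ (via \eqref{unif-bd}), burning a factor of $N$, and then integrates the remaining $v^{\ep,z}$; you keep both $v^{\ep,z}$ factors and control them via the convolution $H(y)=\int|v^{\ep,z}(y-y')||\psi(y')|dy'$, showing $\|H\|_{L^\infty}\lesssim(|z|+\ep)^2+|z|$ (which requires not only \eqref{interm-1} but also the far-field bound from \eqref{diff-est}, as you note). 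Your route produces a slightly sharper estimate on the diagonal part ($\ep\|H\|_{L^\infty}^2$ versus the paper's $O(1)\cdot\int_{\K}|v^{\ep,z}|$), at the cost of a more delicate uniform control of $H$.

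One point needs attention: your one-line dismissal of the remaining coincidence patterns (``such as $j=j'$ or $i=j'$ retain the $\ep^3$ localization and are even smaller'') is too quick for the sub-case $j=j'$, where the integrand becomes $|v^{\ep,z}(y_1-y_2)||v^{\ep,z}(y_3-y_2)|$ with $|y_1-y_3|\le \ep r_C/\xi$; after integrating $y_3$ over that small ball one cannot factor into $H(y_1)H(y_3)$, and one is left with something close to a local $L^2$-norm of $v^{\ep,z}$. This does not break the argument: the reduced index count gives an additional $N^{-1}=\ep$, the $\ep^3$ localization survives, and $\int_{\K}|v^{\ep,z}|^2\lesssim \ep^{-2}(|z|+\ep)^3+|z|$ is more than controlled by those factors; but it is a separate estimate, not an instance of the $\|H\|_{L^\infty}$ bound, and should be stated as such.
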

\begin{proof}
The pre-limit on the left-hand side of \eqref{qv-3iff-bis} can be expanded as
\begin{align*}
&\frac{1}{N^4}\int_0^T\sum_{k\in  \N}\E\Bigg|\sum_{i\in\cN(\zeta_t^N)}\sum_{j\in\cN(\zeta_t^N): j\neq i}v^{\ep,z}(x_i^N(t)-x_j^N(t))\nabla\phi(x_i^N(t))\cdot \lambda_ke_k^\ep(x_i^N(t))\psi(x_j^N(t))\Bigg|^2dt\\
&=\frac{1}{N^4}\int_0^T\sum_{k\in  \N}\E\sum_{i\neq m} \sum_{j\neq n}v^{\ep,z}(x_i^N(t)-x_m^N(t))\psi(x_m^N(t))\nabla\phi(x_i^N(t))^T \lambda_ke_k^\ep(x_i^N(t))\otimes \lambda_ke_k^\ep(x_j^N(t))\\
&\quad\quad\quad\quad\quad\quad\quad\quad \quad\quad \cdot\nabla\phi(x_j^N(t))\psi(x_n^N(t)) v^{\ep,z}(x_j^N(t)-x_n^N(t))dt\\
&=\frac{1}{N^4}\int_0^T\E\sum_{i\neq m} \sum_{j\neq n}v^{\ep,z}(x_i^N(t)-x_m^N(t))\psi(x_m^N(t))\nabla\phi(x_i^N(t))^TC_\ep(x_i^N(t)-x_j^N(t))\\     
&\quad\quad\quad\quad\quad\quad\quad\quad \quad\quad \cdot\nabla\phi(x_j^N(t)) \psi(x_n^N(t))v^{\ep,z}(x_j^N(t)-x_n^N(t))dt\\
&=\frac{1}{N^4}\int_0^T\E\sum_{i=j\neq m,n}v^{\ep,z}(x_i^N(t)-x_m^N(t))\psi(x_m^N(t))\nabla\phi(x_i^N(t))^T\nabla\phi(x_i^N(t)) \psi(x_n^N(t))v^{\ep,z}(x_i^N(t)-x_n^N(t))dt\\
&\quad +\frac{1}{N^4}\int_0^T\E\sum_{i\neq j, i\neq m, j\neq n}v^{\ep,z}(x_i^N(t)-x_m^N(t))\psi(x_m^N(t))\nabla\phi(x_i^N(t))^TC_\ep(x_i^N(t)-x_j^N(t))\\ 
&\quad\quad\quad\quad\quad\quad\quad\quad \quad\quad \cdot\nabla\phi(x_j^N(t)) \psi(x_n^N(t))v^{\ep,z}(x_j^N(t)-x_n^N(t))dt\\
&\le \frac{1}{N^4}\int_0^T\E\sum_{i=j\neq m,n}|v^{\ep,z}(x_i^N(t)-x_m^N(t))|| v^{\ep,z}(x_i^N(t)-x_n^N(t))|\phi(x_i^N(t))^2|\psi(x_m^N(t))\psi(x_n^N(t))|dt\\
&\quad +\frac{9}{N^4}\int_0^T\E\sum_{i\neq j, i\neq m, j\neq n}|v^{\ep,z}(x_i^N(t)-x_m^N(t))|\big\|C_\ep\big(x_i^N(t)-x_j^N(t)\big)\big\|_\infty\\ 
&\quad\quad\quad\quad\quad\quad\quad\quad\quad\quad  \cdot|v^{\ep,z}(x_j^N(t)-x_n^N(t))||\phi(x_i^N(t))\phi(x_j^N(t))\psi(x_m^N(t))\psi(x_n^N(t))|dt,
\end{align*}
where recall \eqref{matrix-max} notation for maximum of matrix entries. 
By the bound \eqref{unif-bd} and the coupling with the auxiliary free system, we further bound the above by
\begin{align*}
&\lesssim \frac{\ep^{-1}}{N^4}\int_0^T\E\sum_{i\neq m,n}|v^{\ep,z}(x_i^N(t)-x_m^N(t))|\phi(x_i^N(t))^2|\psi(x_m^N(t))\psi(x_n^N(t))|dt\\
&\quad+\frac{\ep^{-1}}{N^4}\int_0^T\E\sum_{i\neq j, i\neq m, j\neq n}|v^{\ep,z}(x_i^N(t)-x_m^N(t))|\big\|C_\ep\big(x_i^N(t)-x_j^N(t)\big)\big\|_\infty |\phi(x_i^N(t))\phi(x_j^N(t))\psi(x_m^N(t))\psi(x_n^N(t))| dt\\
&\le \frac{1}{N^3}\int_0^T\E\sum_{i\neq m,n}|v^{\ep,z}(y_i^\ep(t)-y_m^\ep(t))|\phi(y_i^\ep(t))^2|\psi(y_m^\ep(t))\psi(y_n^\ep(t))|dt\\
&\quad+\frac{1}{N^3}\int_0^T\E\sum_{i\neq j, i\neq m, j\neq n}|v^{\ep,z}(y_i^\ep(t)-y_m^\ep(t))|\big\|C_\ep\big(y_i^\ep(t)-y_j^\ep(t)\big)\big\|_\infty |\phi(y_i^\ep(t))\phi(y_j^\ep(t))\psi(y_m^\ep(t))\psi(y_n^\ep(t))| dt.
\end{align*}
By exchangeability of the free system, we can further write the above as
\begin{align*}
&\le \int_0^T\E|v^{\ep,z}(y_1^\ep(t)-y_2^\ep(t))|\phi(y_1^\ep(t))^2|\psi(y_2^\ep(t))\psi(y_3^\ep(t))|dt\\
&\quad\quad+N\int_0^T\E|v^{\ep,z}(y_1^\ep(t)-y_3^\ep(t))|\big\|C_\ep\big(y_1^\ep(t)-y_2^\ep(t)\big)\big\|_\infty |\phi(y_1^\ep(t))\phi(y_2^\ep(t))\psi(y_3^\ep(t))\psi(y_4^\ep(t))| dt.
\end{align*}
Recall that a $\ell$-tuple $\big(y_1^\ep(t),..., y_\ell^\ep(t)\big)$ of free particles, $\ell=3,4$,  have a uniformly bounded joint density, see \eqref{2-joint}. Hence we may rewrite the above as
\begin{align*}
&=\int_0^T\iint_{(\R^3)^3}|v^{\ep,z}(y_1-y_2)|f_t^{123,\ep}(y_1,y_2,y_3)\phi(y_1)^2|\psi(y_2)\psi(y_3)|dy_1dy_2dt\\
&\quad\quad +N\int_0^T\iiint_{(\R^3)^4}|v^{\ep,z}(y_1-y_3)|\|C_\ep(y_1-y_2)\|_\infty f_t^{1234,\ep}(y_1,y_2,y_3, y_4)\phi(y_1)\phi(y_2)\psi(y_3)\psi(y_4)dy_1dy_2dy_3dy_4dt\\
&\le C_0T\int_{\K}|v^{\ep,z}(y)|dy+C_0T N\iint_{\K^2}|v^{\ep,z}(y_3)| \|C(y_2/\ep)\|_\infty dy_2dy_3\\
&\le C_0T\int_{\K}|v^{\ep,z}(y)|dy+C_0T N\ep^3\int_{\K}|v^{\ep,z}(y)|dy\int_{\R^3} \|C(z)\|_\infty dz,
\end{align*}
where $\K\subset\R^3$ is a compact set determined by the supports of $\phi,\psi$.
Note that $N\ep^3=\ep^2$ and we already analyzed in \eqref{interm-1} that $\int_{\K}|v^{\ep,z}(y)|dy\lesssim \ep^2+(3|z|+2\ep)^2+|z|$. This yields \eqref{qv-3iff-bis}.
\end{proof}

\begin{proposition}\label{ppn:grad}
\begin{align}
\lim_{|z|\to0}\limsup_{\ep\to0}\frac{1}{N^4}\int_0^T\sum_{k\in  \N}\E\Bigg|\sum_{i\in\cN(\zeta_t^N)}\sum_{j\in\cN(\zeta_t^N): j\neq i}\phi(x_i^N(t))\nabla v^{\ep,z}(x_i^N(t)-x_j^N(t))\cdot \lambda_ke_k^\ep(x_i^N(t))\psi(x_j^N(t))\Bigg|^2dt=0.\label{qv-3iff-bis-2}
\end{align}
\end{proposition}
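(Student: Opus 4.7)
The plan is to follow the same outline as Proposition \ref{ppn:cov-sum}: expand the square, contract the sum over $k$ using $\sum_{k}\lambda_k^2 e_k^\ep(x)\otimes e_k^\ep(y)=C_\ep(x-y)$, separate the resulting 4-fold sum over indices $(i,m,j,n)$ according to whether $i=j$ or $i\neq j$, and in the diagonal case further split by $m=n$ versus $m\neq n$. In each sub-case I would pass to the auxiliary free particle system $\{y_i^\ep\}$, use exchangeability to replace the sum by a fixed tuple, and then bound the corresponding integral against the uniformly bounded $\ell$-particle densities from Proposition \ref{ppn:gaussian}. The only difference with Proposition \ref{ppn:cov-sum} is that $v^{\ep,z}$ is replaced by $\nabla v^{\ep,z}$, so I would use the gradient bound $\|\nabla u^\ep\|_\infty \lesssim \ep^{-2}$ from \eqref{unif-grad-bd} in place of $\|u^\ep\|_\infty\lesssim \ep^{-1}$, and the $L^1$ estimate $\int_{\K}|\nabla v^{\ep,z}(y)|\,dy \lesssim |z|^{1/4}$ from \eqref{triple-int} in place of \eqref{interm-1}.

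For the off-diagonal part $i\neq j$, the sum has at most $O(N^4)$ terms, compensating the $N^{-4}$ prefactor. By 4-particle exchangeability and the uniform bound on $f_t^{1234,\ep}$, I would reduce to an integral of the form
\[
\int\|C_\ep(y_1-y_2)\|_\infty\,|\nabla v^{\ep,z}(y_1-y_3)|\,|\nabla v^{\ep,z}(y_2-y_4)|\,\phi(y_1)\phi(y_2)\psi(y_3)\psi(y_4)\,dy_1dy_2dy_3dy_4.
\]
The change of variables $w_1=y_1-y_2$, $w_2=y_1-y_3$, $w_3=y_2-y_4$ decouples this into $\int\|C_\ep(w_1)\|_\infty dw_1 \cdot (\int_{\K}|\nabla v^{\ep,z}|\,dw)^2$, which by the support of $C_\ep$ and \eqref{triple-int} is $\lesssim \ep^3|z|^{1/2}\to 0$.

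For the diagonal part $i=j$, I would distinguish $m\neq n$ and $m=n$. In the $m\neq n$ sub-case the sum has $O(N^3)$ terms, so a factor $N^{-1}=\ep$ remains; after exchangeability this becomes $\ep\cdot\E[|\nabla v^{\ep,z}(y_1^\ep-y_2^\ep)||\nabla v^{\ep,z}(y_1^\ep-y_3^\ep)|]$, which using the 3-particle density bound factorizes (after change of variables centered at $y_1$) into $(\int_{\K}|\nabla v^{\ep,z}|\,dw)^2\lesssim |z|^{1/2}$, giving a total contribution $\lesssim \ep|z|^{1/2}$. In the $m=n$ sub-case there are only $O(N^2)$ terms; here I would use the crude uniform estimate $|\nabla v^{\ep,z}|\lesssim \ep^{-2}$ to pull one factor out in $L^\infty$, leaving $\ep^{2}\cdot \ep^{-2}\cdot \int_{\K}|\nabla v^{\ep,z}|\,dy \lesssim |z|^{1/4}\to 0$ after first sending $\ep\to 0$.

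The main delicate point is this last $m=n$ piece: $\nabla v^{\ep,z}$ is one derivative less integrable than $v^{\ep,z}$, and a naive $L^2$ bound $\int_{\K}|\nabla v^{\ep,z}|^2 dy \lesssim \ep^{-1}$ would barely match the $N^{-2}=\ep^2$ prefactor (yielding only an $O(\ep)$ bound with no $|z|$-smallness). Avoiding this requires splitting off one $L^\infty$ factor of $\ep^{-2}$ and exploiting the $|z|^{1/4}$ gain provided by \eqref{triple-int}; this is the one step where the gradient case is genuinely more delicate than Proposition \ref{ppn:cov-sum}, and it is the key obstacle to watch in the actual write-up.
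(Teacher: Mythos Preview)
Your plan is correct and matches the paper's proof almost line for line: expand the square, contract over $k$ to produce $C_\ep$, split into $i=j$ versus $i\neq j$, and in the diagonal case further split by $m=n$ versus $m\neq n$, each time passing to the free system and using the $\ell$-particle density bounds together with \eqref{triple-int}.

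Two small remarks. First, in the off-diagonal piece you keep both factors of $|\nabla v^{\ep,z}|$ in $L^1$ and use $\int\|C_\ep\|_\infty\lesssim\ep^3$, obtaining $\ep^3|z|^{1/2}$; the paper instead pulls one $|\nabla v^{\ep,z}|$ factor out via the sup bound $\ep^{-2}$, getting $\ep\,|z|^{1/4}$. Your route is slightly cleaner and gives a better power of $\ep$, but either works. Second, your power-counting in the $m=n$ sub-case (sup bound $\ep^{-2}$ against $N^{-2}=\ep^2$, leaving $\int_{\K}|\nabla v^{\ep,z}|\lesssim|z|^{1/4}$) is the correct accounting; the paper's displayed $\ep^{-1}$ at that step appears to be a slip, and your version is what one should write.
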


\begin{proof}
As the in the last proposition, the pre-limit on the left-hand side of \eqref{qv-3iff-bis-2} can be expanded as
\begin{align*}
&\frac{1}{N^4}\int_0^T\sum_{k\in  \N}\E\Bigg|\sum_{i\in\cN(\zeta_t^N)}\sum_{j\in\cN(\zeta_t^N): j\neq i}\phi(x_i^N(t))\nabla v^{\ep,z}(x_i^N(t)-x_j^N(t))\cdot \lambda_ke_k^\ep(x_i^N(t))\psi(x_j^N(t))\Bigg|^2dt\\
&=\frac{1}{N^4}\int_0^T\E\sum_{i\neq m} \sum_{j\neq n}\phi(x_i^N(t))\psi(x_m^N(t))\nabla v^{\ep,z}(x_i^N(t)-x_m^N(t))^TC_\ep(x_i^N(t)-x_j^N(t))\\
&\quad\quad\quad\quad\quad\quad\quad\quad \cdot \nabla v^{\ep,z}(x_j^N(t)-x_n^N(t))\phi(x_j^N(t)) \psi(x_n^N(t))dt\\
&\le \frac{1}{N^4}\int_0^T\E\sum_{i=j\neq m,n}|\nabla v^{\ep,z}(x_i^N(t)-x_m^N(t))||\nabla v^{\ep,z}(x_i^N(t)-x_n^N(t))|\phi(x_i^N(t))^2|\psi(x_m^N(t))\psi(x_n^N(t))|dt\\
&+\frac{1}{N^4}\int_0^T\E\sum_{i\neq j, i\neq m, j\neq n}|\nabla v^{\ep,z}(x_i^N(t)-x_m^N(t))|\big\|C_\ep\big(x_i^N(t)-x_j^N(t)\big)\big\|_\infty\\
&\quad\quad\quad\quad\quad\quad\quad\quad \cdot |\nabla v^{\ep,z}(x_j^N(t)-x_n^N(t))||\phi(x_i^N(t))\phi(x_j^N(t))\psi(x_m^N(t))\psi(x_n^N(t))|dt,
\end{align*}
We bound the two terms separately. First we treat the first term and we divide it further according to $m=n$ or $m\neq n$. In case $i=j$, $m=n$ we have 
\begin{align*}
&\frac{1}{N^4}\int_0^T\E\sum_{i=j\neq m=n}|\nabla v^{\ep,z}(x_i^N(t)-x_m^N(t))||\nabla v^{\ep,z}(x_i^N(t)-x_m^N(t))|\phi(x_i^N(t))^2\psi(x_m^N(t))^2dt\\
&\le \frac{1}{N^4}\int_0^T\E\sum_{i=j\neq m=n}|\nabla v^{\ep,z}(y_i^\ep(t)-y_m^\ep(t))||\nabla v^{\ep,z}(y_i^\ep(t)-y_m^\ep(t))|\phi(y_i^\ep(t))^2\psi(y_m^\ep(t))^2dt\\
&\le \frac{1}{N^2}\int_0^T\E|\nabla v^{\ep,z}(y_1^\ep(t)-y_2^\ep(t))||\nabla v^{\ep,z}(y_1^\ep(t)-y_2^\ep(t))|\phi(y_1^\ep(t))^2\psi(y_2^\ep(t))^2dt\\
&\lesssim \frac{\ep^{-1}}{N^2}\int_0^T\E|\nabla v^{\ep,z}(y_1^\ep(t)-y_2^\ep(t))|\phi(y_1^\ep(t))^2\psi(y_2^\ep(t))^2dt,
\end{align*}
and we have analysed in \eqref{triple-int} that such a term (even without the $1/N$ factor) is negligible. In case $i=j$, $m\neq n$, we have
\begin{align*}
&\frac{1}{N^4}\int_0^T\E\sum_{i=j\neq m\neq n}|\nabla v^{\ep,z}(x_i^N(t)-x_m^N(t))||\nabla v^{\ep,z}(x_i^N(t)-x_n^N(t))|\phi(x_i^N(t))^2|\psi(x_m^N(t))\psi(x_n^N(t))|dt\\
&\le\frac{1}{N^4}\int_0^T\E\sum_{i=j\neq m\neq n}|\nabla v^{\ep,z}(y_i^\ep(t)-y_m^\ep(t))||\nabla v^{\ep,z}(y_i^\ep(t)-y_n^\ep(t))|\phi(y_i^\ep(t))^2|\psi(y_m^\ep(t))\psi(y_n^\ep(t))|dt\\
&\le \frac{1}{N}\int_0^T\E|\nabla v^{\ep,z}(y_1^\ep(t)-y_2^\ep(t))||\nabla v^{\ep,z}(y_1^\ep(t)-y_3^\ep(t))|\phi(y_1^\ep(t))^2|\psi(y_2^\ep(t))\psi(y_3^\ep(t))|dt.
\end{align*}
Recall that the joint density of a triple of free particles $\left(y_1^\ep(t), y_2^\ep(t), y_3^\ep(t)\right)$ has the bound \eqref{2-joint}, hence we have (even without the $1/N$ factor)
\begin{align*}
&\int_0^T\E|\nabla v^{\ep,z}(y_1^\ep(t)-y_2^\ep(t))||\nabla v^{\ep,z}(y_1^\ep(t)-y_3^\ep(t))|\phi(y_1^\ep(t))^2|\psi(y_2^\ep(t))\psi(y_3^\ep(t))|dt\\
&=\int_0^T\iiint_{(\R^3)^3}|\nabla v^{\ep,z}(y_1-y_2)||\nabla v^{\ep,z}(y_1-y_3)|\phi(y_1)^2|\psi(y_2)\psi(y_3)|f_t^{123,\ep}(y_1,y_2,y_3)dy_1dy_2dy_3\\
&\le TC_0\iiint_{(\R^3)^3}|\nabla v^{\ep,z}(y_1-y_2)||\nabla v^{\ep,z}(y_1-y_3)\phi(y_1)^2|\psi(y_2)\psi(y_3)|dy_1dy_2dy_3\\
&\lesssim\left(\int_{\K}|\nabla v^{\ep,z}(y)|dy\right)^2,
\end{align*}
where $\K$ is a compact set determined by the supports of $\phi,\psi$.
Since from \eqref{triple-int} we know that $\int_{\K}|\nabla v^{\ep,z}(y)|dy$ is negligible, so is the whole term.

To treat the second main term involving the covariance $C_\ep$, we proceed as in the proof of Proposition \ref{ppn:cov-sum}, using the bound \eqref{unif-grad-bd} and coupling with the auxiliary system
\begin{align*}
&\frac{1}{N^4}\int_0^T\E\sum_{i\neq j, i\neq m, j\neq n}|\nabla v^{\ep,z}(x_i^N(t)-x_m^N(t))|\big\|C_\ep\big(x_i^N(t)-x_j^N(t)\big)\big\|_\infty\\
&\quad\quad\quad\quad  \quad\quad\quad\quad \cdot|\nabla v^{\ep,z}(x_j^N(t)-x_n^N(t))||\phi(x_i^N(t))\phi(x_j^N(t))\psi(x_m^N(t))\psi(x_n^N(t))|dt\\
&\lesssim \frac{\ep^{-2}}{N^4}\int_0^T\E\sum_{i\neq j, i\neq m, j\neq n}|\nabla v^{\ep,z}(x_i^N(t)-x_m^N(t))| \big\|C_\ep\big(x_i^N(t)-x_j^N(t)\big)\big\|_\infty|\phi(x_i^N(t))\phi(x_j^N(t))\psi(x_m^N(t))\psi(x_n^N(t))| dt\\
&\le \ep^{-2}\int_0^T\E|\nabla v^{\ep,z}(y_1^\ep(t)-y_2^\ep(t))|\big\|C_\ep(y_1^\ep(t)-y_3^\ep(t))\big\|_\infty |\phi(x_1^N(t))\phi(x_2^N(t))\psi(x_3^N(t))\psi(x_4^N(t))| dt\\
&\lesssim \ep^{-2}\ep^3\int_{\K}|\nabla v^{\ep,z}(y)|dy\int_{\R^3} \|C(z)\|_\infty dz.
\end{align*}
Notice that we have a factor of $\ep$ and in addition $\int_{\K}|\nabla v^{\ep,z}(y)|dy$ is negligible. This completes the proof of \eqref{qv-3iff-bis-2}.
\end{proof}

\medskip
We continue with \eqref{qv-3iff-2}. By the elementary inequality $(a+b)^2\le 2(a^2+b^2)$, it suffices to control separately two terms in the following proposition.
\begin{proposition}
\begin{align}\label{qv-3iff-2-bis}
&\lim_{|z|\to0}\limsup_{\ep\to0}\frac{1}{N^4}\sum_{i\in\cN(\zeta_t^N)}\int_0^T\Bigg|\sum_{j\in\cN(\zeta_t^N): j\neq i}\nabla v^{\ep,z}(x_i^N(t)-x_j^N(t))\phi(x_i^N(t))\psi(x_j^N(t))\Bigg|^2 dt=0.\\
&\lim_{|z|\to0}\limsup_{\ep\to0}\frac{1}{N^4}\sum_{i\in\cN(\zeta_t^N)}\int_0^T\Bigg|\sum_{j\in\cN(\zeta_t^N): j\neq i}v^{\ep,z}(x_i^N(t)-x_j^N(t))\nabla\phi(x_i^N(t))\psi(x_j^N(t))\Bigg|^2 dt=0.\nonumber
\end{align}
\end{proposition}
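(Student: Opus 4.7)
The plan closely mirrors the arguments for \eqref{qv-3iff-bis} and \eqref{qv-3iff-bis-2} in Proposition \ref{ppn:grad}, but is actually simpler because no mixing covariance matrix $C_\ep$ appears (the martingale \eqref{qv-3iff-2} comes only from the independent Brownian motions $B_i$). First, I expand each squared modulus into a double sum over $(j,n)$ with $j,n\neq i$, producing a triple sum over $(i,j,n)$, and split it into the \emph{diagonal} part $\{j=n\}$ and the \emph{off-diagonal} part $\{j\neq n\}$. In each part, I dominate the active-particle sum by the free-particle analogue indexed over $\{1,\dots,N\}$ (using the natural coupling with \eqref{free-sys}), invoke exchangeability to reduce to a fixed tuple $(y_1^\ep(t),y_2^\ep(t),y_3^\ep(t))$, and then apply the uniform Gaussian density bound of Proposition \ref{ppn:gaussian}. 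The stochastic problem thus collapses to a deterministic estimate on $\int_\K |\nabla v^{\ep,z}|^p\,dy$ or $\int_\K |v^{\ep,z}|^p\,dy$ for $p=1,2$, where $\K\subset\R^3$ is the compact set determined by $\text{supp}(\phi)\cup\text{supp}(\psi)$.

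For the first equation, the diagonal piece reduces to
\begin{align*}
\frac{TC_0}{N^2}\int_\K|\nabla v^{\ep,z}(y)|^2\,dy,
\end{align*}
which I control by decomposing $\K$ into $\K\cap B(0,2|z|+2\ep)$ and its complement. On the complement, \eqref{diff-est-grad} gives $|\nabla v^{\ep,z}(y)|\lesssim|z||y|^{-3}$, contributing $O(|z|^{-1})$ after integration in spherical coordinates. On the inner ball, I use the uniform bound $|\nabla u^\ep|\lesssim(\ep\vee|\cdot|)^{-2}$ of \eqref{unif-grad-bd} applied to both $\nabla u^\ep(y+z)$ and $\nabla u^\ep(y)$, which contributes $O(\ep^{-1})$. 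Multiplying by $N^{-2}=\ep^2$ produces the bound $O(\ep+\ep^2|z|^{-1})$, vanishing as $\ep\to 0$ with $|z|$ fixed.

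For the off-diagonal piece, the coupling and exchangeability together with the triple joint density bound give, by Fubini,
\begin{align*}
\frac{TC_0}{N}\int_{\R^3}\phi(y_1)^2\Bigl(\int|\nabla v^{\ep,z}(y_1-y_2)||\psi(y_2)|\,dy_2\Bigr)^2\,dy_1 \lesssim \frac{1}{N}\Bigl(\int_\K|\nabla v^{\ep,z}(y)|\,dy\Bigr)^2\lesssim \ep|z|^{1/2},
\end{align*}
where I used \eqref{triple-int}, namely $\int_\K|\nabla v^{\ep,z}|\,dy\lesssim |z|^{1/4}$. The second equation in \eqref{qv-3iff-2-bis} is treated identically after replacing $\nabla v^{\ep,z}$ with $v^{\ep,z}$: the pointwise estimates \eqref{diff-est} and \eqref{unif-bd}, together with the kind of computation already carried out in \eqref{interm-1}, yield $\int_\K|v^{\ep,z}|^2\,dy=O(\ep+|z|)$ and $\int_\K|v^{\ep,z}|\,dy=O(\ep^2+|z|)$, producing diagonal $O(\ep^2(\ep+|z|))$ and off-diagonal $O(\ep(\ep^2+|z|)^2)$ contributions, both manifestly negligible.

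The main obstacle will be the diagonal piece of the first equation: the gradient difference $\nabla v^{\ep,z}$ inherits two $\ep^{-2}$ singularities from $\nabla u^\ep(y+z)$ and $\nabla u^\ep(y)$, which on squaring and integrating over $\R^3$ produce an $\ep^{-1}$ blow-up that is only just absorbed by the $N^{-2}=\ep^2$ prefactor. Once this borderline cancellation is verified, everything else follows the template already established for Proposition \ref{ppn:grad} and for the other negligibility results of Section \ref{sec:neg}.
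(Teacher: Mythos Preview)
Your proposal is correct and follows essentially the same approach as the paper: expand the square into diagonal ($j=n$) and off-diagonal ($j\neq n$) parts, dominate by the free system via coupling and exchangeability, and reduce to the deterministic integrals $\int_\K|\nabla v^{\ep,z}|^p\,dy$ already controlled in Proposition~\ref{ppn:grad} and \eqref{triple-int}. The only cosmetic difference is that for the diagonal piece you estimate $\int_\K|\nabla v^{\ep,z}|^2\,dy$ directly via the two-region decomposition, whereas the paper (in Proposition~\ref{ppn:grad}) bounds one factor of $|\nabla v^{\ep,z}|$ pointwise by $\ep^{-2}$ and invokes \eqref{triple-int}; both routes give the same negligibility.
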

\begin{proof}
The proof of the two statements are similar and for brevity we only treat the first one \eqref{qv-3iff-2-bis}. The pre-limit of the left-hand side of \eqref{qv-3iff-2-bis} can be bounded by
\begin{align*}
&\frac{1}{N^4}\sum_{i\in\cN(\zeta_t^N)}\int_0^T\Bigg|\sum_{j\in\cN(\zeta_t^N): j\neq i}\nabla v^{\ep,z}(x_i^N(t)-x_j^N(t))\phi(x_i^N(t))\psi(x_j^N(t))\Bigg|^2 dt\\
&=\frac{1}{N^4}\sum_{i\in\cN(\zeta_t^N)}\int_0^T\sum_{\substack{j, m\in\cN(\zeta_t^N): \\j, m\neq i}}\nabla v^{\ep,z}(x_i^N(t)-x_j^N(t))\cdot \nabla v^{\ep,z}(x_i^N(t)-x_m^N(t))\phi(x_i^N(t))^2\psi(x_j^N(t))\psi(x_m^N(t)) dt\\
&\le \frac{1}{N^4}\sum_{i\in\cN(\zeta_t^N)}\int_0^T\sum_{j, m\in\cN(\zeta_t^N): j=m\neq i}|\nabla v^{\ep,z}(x_i^N(t)-x_j^N(t))||\nabla v^{\ep,z}(x_i^N(t)-x_j^N(t))|\phi(x_i^N(t))^2\psi(x_j^N(t))^2dt\\
&\quad+\frac{1}{N^4}\sum_{i\in\cN(\zeta_t^N)}\int_0^T\sum_{\substack{j, m\in\cN(\zeta_t^N):\\ j\neq m, \, j,m\neq i}}|\nabla v^{\ep,z}(x_i^N(t)-x_j^N(t))||\nabla v^{\ep,z}(x_i^N(t)-x_m^N(t))|\phi(x_i^N(t))^2\psi(x_j^N(t))\psi(x_m^N(t)) dt.
\end{align*}
However, notice that this type of terms are already treated in the proof of Proposition \ref{ppn:grad}.
\end{proof}

\medskip
We continue with \eqref{qv-jump}. There are two terms and we bound them separately in the next two propositions.
\begin{proposition}
\begin{align}
\lim_{|z|\to0}\limsup_{\ep\to0}\E\Bigg|\frac{1}{N^5}\int_0^T\sum_{i\in\cN(\zeta_t^N)}\sum_{j\in\cN(\zeta_t^N): j\neq i}\theta^\ep(x_i^N(t) -x_j^N(t))\big|v^{\ep,z}(x_i^N(t)-x_j^N(t))\phi(x_i^N(t))\psi(x_j^N(t))\big|^2dt\Bigg|=0.\label{qv-jump1-bis}
\end{align}
\end{proposition}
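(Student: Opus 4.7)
The plan is to follow the same blueprint used for the preceding negligibility propositions: namely, dominate the pre-limit by a nonnegative expression in the true particles, couple to the auxiliary free system $\{y_i^\ep\}$ via the coupling discussed in Section \ref{sec:emp}, apply exchangeability to reduce the double sum to a pair expectation, then use the uniform Gaussian bound \eqref{2-joint} of Proposition \ref{ppn:gaussian} on the joint density $f_t^{12,\ep}$ to turn the problem into a purely deterministic integral estimate. Since the integrand is already nonnegative, the absolute value outside the expectation is harmless.

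The key pointwise ingredient will be a uniform $L^\infty$ bound on $v^{\ep,z}$ on the support of $\theta^\ep$. Specifically, as in all preceding propositions one assumes $|z|\ge 2\ep$ (the inner limit $\ep\to 0$ is taken before $|z|\to0$). On the support of $\theta^\ep(x_i^N(t)-x_j^N(t))$ one has $|x_i^N(t)-x_j^N(t)|\le\ep$, so $|x_i^N(t)-x_j^N(t)+z|\ge |z|-\ep\ge |z|/2$. Applying \eqref{unif-bd} to each of the two pieces of $v^{\ep,z}=u^\ep(\cdot+z)-u^\ep(\cdot)$ gives
\begin{align*}
|v^{\ep,z}(x_i^N(t)-x_j^N(t))|\;\lesssim\; |z|^{-1}+\ep^{-1}\;\lesssim\;\ep^{-1},
\end{align*}
so $|v^{\ep,z}|^2\lesssim \ep^{-2}$ on the support of $\theta^\ep$.

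Combining these ingredients, the pre-limit in \eqref{qv-jump1-bis} is bounded (after coupling and exchangeability) by
\begin{align*}
\frac{1}{N^5}\cdot N^2\int_0^T\iint_{(\R^3)^2}\theta^\ep(y_1-y_2)|v^{\ep,z}(y_1-y_2)|^2|\phi(y_1)|^2|\psi(y_2)|^2 f_t^{12,\ep}(y_1,y_2)\,dy_1dy_2\,dt.
\end{align*}
Using \eqref{2-joint}, the boundedness of $\phi,\psi$, the uniform bound $|v^{\ep,z}|^2\lesssim \ep^{-2}$ on the support of $\theta^\ep$, and $\int\theta^\ep=1$, the whole expression is controlled by
\begin{align*}
\frac{C_0 T}{N^3}\cdot \ep^{-2}\cdot \|\phi\|_{L^\infty}^2\|\psi\|_{L^\infty}^2\;\lesssim\; \ep^{3}\cdot\ep^{-2}\;=\;\ep,
\end{align*}
since $N=\ep^{-1}$. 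Letting $\ep\to 0$ and then $|z|\to 0$ yields \eqref{qv-jump1-bis}.

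There is no real obstacle here: the nonnegativity of the integrand avoids any need for covariance-type cancellations, and no triple/quadruple density estimates are required (only the pair bound). The only point to be mindful of is the order of limits, which justifies assuming $|z|\ge 2\ep$ so that the $L^\infty$ bound on $v^{\ep,z}$ on the support of $\theta^\ep$ reduces simply to $\ep^{-1}$; the $\ep^{-2}$ blow-up of $|v^{\ep,z}|^2$ is more than compensated by the factor $\ep^3$ coming from $1/N^3$ under the scaling \eqref{abstract scaling}.
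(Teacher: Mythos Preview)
Your proof is correct and follows essentially the same approach as the paper: bound $|v^{\ep,z}|\lesssim\ep^{-1}$ via \eqref{unif-bd}, couple to the free system, use exchangeability and the pair density bound \eqref{2-joint}, then conclude $O(N^{-1})=O(\ep)$. Two minor remarks: the paper simply uses the global bound $|u^\ep|\le M\ep^{-1}$ from \eqref{unif-bd} without invoking $|z|\ge 2\ep$ (your detour through $|z|^{-1}$ is unnecessary here), and in your final estimate you should keep the compact support of one test function (e.g.\ replace $\|\psi\|_{L^\infty}^2$ by $\|\psi\|_{L^2}^2$) since $\iint\theta^\ep(y_1-y_2)\,dy_1dy_2=\infty$.
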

\begin{proof}
By \eqref{unif-bd} and coupling with the auxiliary free system, the pre-limit on the left-hand side of \eqref{qv-jump1-bis} can bounded by
\begin{align*}
&\le (\ep^{-1})^2\frac{1}{N^5}\int_0^T\E\sum_{i\in\cN(\zeta_t^N)}\sum_{j\in\cN(\zeta_t^N): j\neq i}\theta^\ep(x_i^N(t) -x_j^N(t))\phi(x_i^N(t))^2\psi(x_j^N(t))^2dt\\
&\le\frac{1}{N^3}\int_0^T\E\sum_{i\neq j=1}^N\theta^\ep(y_i^\ep(t) -y_j^\ep(t))\phi(y_i^\ep(t))^2\psi(y_j^\ep(t))^2dt\\
&\le \frac{1}{N}\int_0^T\E\; \theta^\ep(y_1^\ep(t) -y_2^\ep(t))\phi(y_1^\ep(t))^2\psi(y_2^\ep(t))^2dt
\\
&= \frac{1}{N}\int_0^T\iint_{(\R^3)^2} \theta^\ep(y_1-y_2)\phi(y_1)^2\psi(y_2)^2f_t^{12,\ep}(y_1,y_2)dy_1dy_2dt=O(N^{-1}),
\end{align*}
where $\int\theta^\ep =1$ is used. This yields \eqref{qv-jump1-bis}.
\end{proof}

\begin{proposition}
\begin{align}
\lim_{|z|\to0}\limsup_{\ep\to0} \frac{1}{N^5}\E\int_0^T\sum_{i\in\cN(\zeta_t^N)}&\sum_{j\in\cN(\zeta_t^N): j\neq i}\theta^\ep(x_i^N(t)-x_j^N(t))\nonumber \\
&\cdot\Bigg|\sum_{k\in\cN(\zeta_t^N): k\neq i,j}v^{\ep,z}(x_i^N(t)-x_k^N(t))\phi(x_i^N(t))\psi(x_k^N(t))\Bigg|^2dt=0.\label{qv-jump2-bis}
\end{align}
\end{proposition}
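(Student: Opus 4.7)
The plan is to expand the square as a double sum over indices $k,\ell \in \cN(\zeta_t^N)\setminus\{i,j\}$ and split into the diagonal piece $k=\ell$ and the off-diagonal piece $k\neq\ell$. The standard toolbox from the previous propositions in this section then applies directly: couple to the auxiliary free particle system $\{y_i^\ep\}$ of \eqref{free-sys}, use its exchangeability, dominate joint densities of $\ell$-tuples via Proposition \ref{ppn:gaussian}, and exploit $\int\theta^\ep(w)dw =1$ to integrate out the variable attached to $\theta^\ep$.

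For the off-diagonal piece, the four indices $(i,j,k,\ell)$ are distinct, so there are at most $N^4$ terms, giving a net prefactor $N^4/N^5 = N^{-1}$. After coupling, exchangeability, and the Gaussian density bound, one arrives at
\[
\frac{C_0}{N}\int_{(\R^3)^4}\theta^\ep(y_1-y_2)|v^{\ep,z}(y_1-y_3)||v^{\ep,z}(y_1-y_4)|\phi(y_1)^2|\psi(y_3)\psi(y_4)|dy_1\cdots dy_4.
\]
Integrating out $y_2$ with $\int\theta^\ep = 1$ and then changing variables $w = y_1 - y_3$ (resp.\ $y_1 - y_4$) for each fixed $y_1$ in the compact support of $\phi$, the integral factorizes into the square of $\int_{\K'} |v^{\ep,z}(w)|dw$. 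By the estimate already derived in \eqref{interm-1}, this is $\lesssim \ep^2 + (3|z|+2\ep)^2 + |z|$, so the off-diagonal contribution is $O\!\left(N^{-1}(|z|+\ep^2+(|z|+\ep)^2)^2\right)$, which vanishes as $\ep\to0$ then $|z|\to0$.

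For the diagonal $k=\ell$ piece, only three indices are distinct, so the prefactor improves to $N^{-2}$. Applying the same reduction gives
\[
\frac{C_0}{N^2}\int_{(\R^3)^2}|v^{\ep,z}(y_1-y_3)|^2\phi(y_1)^2\psi(y_3)^2\,dy_1 dy_3,
\]
and by the pointwise bound \eqref{unif-bd}, $|v^{\ep,z}(w)|^2 \lesssim (|w|\vee\ep)^{-2} + (|w+z|\vee\ep)^{-2}$, which is integrable on compact sets uniformly in $\ep$ and $z$ (since $3-2 = 1 > 0$). Hence this contribution is $O(N^{-2})$.

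The main technical point to watch is not losing the $N^{-1}$ prefactor by using the crude pointwise bound $|v^{\ep,z}|\lesssim \ep^{-1}$ on the off-diagonal term, which would give only $O(1)$. The saving comes from integrating $v^{\ep,z}$ against the test function $\psi$ and invoking the $L^1$-smallness estimate \eqref{interm-1}, which is precisely what makes the bound go to zero with $|z|$. The diagonal piece is easier because the prefactor $N^{-2}$ absorbs any loss, and only uniform $L^2_{\rm loc}$-control on $v^{\ep,z}$ (which follows from \eqref{unif-bd}) is needed.
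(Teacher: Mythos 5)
Your proof is correct, and it takes a genuinely different (and somewhat sharper) route than the paper's. The paper does \emph{not} split the expanded square into diagonal and off-diagonal pieces. Instead, it bounds one of the two $v^{\ep,z}$ factors pointwise by $\ep^{-1}$ using \eqref{unif-bd}, absorbs that $\ep^{-1}$ into the prefactor via $\ep^{-1}N^{-5} = N^{-4}$, and then applies the coupling, exchangeability, the four-particle density bound of Proposition \ref{ppn:gaussian}, and $\int\theta^\ep = 1$ to reduce to the single factor $\int_{\K}|v^{\ep,z}|\,dy \lesssim \ep^2 + (3|z|+2\ep)^2 + |z|$ from \eqref{interm-1}. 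This already tends to zero in the limit $\ep\to0$, $|z|\to0$, so the paper's bound is an $O(1)$ prefactor times a single power of the small $L^1$ quantity. Your version, by contrast, keeps both $v^{\ep,z}$ factors in the off-diagonal block and retains the leftover $N^{-1}$, producing $O(N^{-1})\cdot\left(\int_{\K}|v^{\ep,z}|\right)^2$, and handles the diagonal separately at order $N^{-2}$ using the uniform $L^2_{\rm loc}$ bound on $v^{\ep,z}$ that follows from \eqref{unif-bd}. Both arguments are valid; yours gives a better rate (an extra $\ep$ and a squared smallness) at the cost of the extra case split, while the paper's is shorter and matches the style of the other propositions in Section \ref{sec:neg}.

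One small calibration on your closing remark: the paper's use of the ``crude'' pointwise bound $|v^{\ep,z}|\lesssim\ep^{-1}$ on one factor does \emph{not} end up giving only $O(1)$ --- it gives $O(1)$ times the $L^1$-small quantity, which still vanishes. What would fail is using the pointwise bound on \emph{both} factors and dropping the $L^1$-smallness entirely; that would indeed leave an $O(1)$ (or worse) remainder. So the essential content of your warning is right --- the $L^1$-estimate \eqref{interm-1} (or the $L^2$ variant for the diagonal) is what produces the decay --- but the pointwise $\ep^{-1}$ bound on one factor is not by itself fatal, and is in fact exactly what the paper uses.
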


\begin{proof}
Th prelimit on the left-hand side of \eqref{qv-jump2-bis} can be bounded by
\begin{align*}
&=\frac{1}{N^5}\E\int_0^T\sum_{i\neq j}\sum_{k, \ell\neq i,j}\theta^\ep(x_i^N(t)-x_j^N(t))v^{\ep,z}(x_i^N(t)-x_k^N(t))v^{\ep,z}(x_i^N(t)-x_\ell^N(t))\phi(x_i^N(t))^2\psi(x_k^N(t))\psi(x_\ell^N(t))dt\\
&\lesssim  \ep^{-1}\frac{1}{N^5}\E\int_0^T\sum_{i\neq j; k,\ell \neq i,j}\theta^\ep(x_i^N(t)-x_j^N(t))|v^{\ep,z}(x_i^N(t)-x_k^N(t))|\phi(x_i^N(t))^2|\psi(x_k^N(t))\psi(x_\ell^N(t))|dt,
\end{align*}
where we have used \eqref{unif-bd}. Using the auxiliary free system, its exchangeability, and that $(y^\ep_1(t),...,  y^\ep_4 (t))$ has a uniformly bounded density, see Proposition \ref{ppn:gaussian}, we further bound the above by
\begin{align*}
&\le \frac{1}{N^4}\int_0^T\sum_{i\neq j; k,\ell \neq i,j}\theta^\ep(y_i^\ep(t)-y_j^\ep(t))|v^{\ep,z}(y_i^\ep(t)-y_k^\ep(t))|\phi(y_i^\ep(t))^2|\psi(y_k^\ep(t))\psi(y_\ell^\ep(t))|dt\\
&\le  \int_0^T \theta^\ep(y_1^\ep(t)-y_2^\ep(t))|v^{\ep,z}(y_1^\ep(t)-y_3^\ep(t))|\phi(y_1^\ep(t))^2|\psi(y_3^\ep(t))\psi(y_4^\ep(t))|dt\\
&\le \int_0^T\iiint_{(\R^3)^4}\theta^\ep(y_1-y_2)|v^{\ep,z}(y_1-y_3)\phi(y_1)^2|\psi(y_3)\psi(y_4)|f_t^{1234,\ep}(y_1,y_2,y_3, y_4)dt\\
&\lesssim\int_{\K}|v^{\ep,z}(y_3)|dy_3,
\end{align*}
where $\K$ is a compact set determined by the supports of $\phi,\psi$, and we used that $\int_{\R^3}\theta^\ep(y_1-y_2)dy_2=1$ for any $y_1$. 
We already analyzed in \eqref{interm-1} that $\int_{\K}|v^{\ep,z}(y_3)|dy_3\lesssim \ep^2+(3|z|+2\ep)^2+|z|$. This yields \eqref{qv-jump2-bis}.
\end{proof}

\medskip
This completes the verification that all the terms \eqref{ini}, \eqref{cross}, \eqref{cross-11}, \eqref{cross-12}, \eqref{2nd-test}, \eqref{cross-2}, \eqref{coag-off}, \eqref{cov-3iag}, \eqref{diag-2}, \eqref{qv-3iff}, \eqref{qv-3iff-2} and \eqref{qv-jump} are negligible in the sense that $\lim_{|z|\to0}\limsup_{\ep\to0}\E|(*)|=0$, where $(*)$ stands for each of these terms.

\section{Potential theory and consequence for particle coalescence}\label{sec:potential}
Let us summarize the result of our main theorem by saying that we have
introduced a non-inertial model for particle coalescence, driven by a common
noise, and we have obtained in the scaling limit a \abbr{PDE} for the limit density with
a mean coalescence rate $\overline{\mathcal{R}}$ given by
\[
\overline{\mathcal{R}}=\overline{\mathcal{R}}\left(  \xi, \sigma^2,R_{0}\right)
:=R_{0}\int_{\mathbb{R}^3}\theta\left(  x\right)  \left(  1+u\left(
x\right)  \right)  dx
\]
where $u:\mathbb{R}^3\rightarrow\mathbb{R}$ solves the auxiliary
cell-equation
\[
\sigma_{0}^{2}\Delta u(x)+\sigma^2\nabla\cdot\left(\omega(\xi x)\nabla u(x)\right)
=R_{0}\theta(x)\left(  1+u(x)\right)  .
\]
Let us develop a few potential theory preliminaries and deduce properties of
$\overline{\mathcal{R}}\left(  \xi, \sigma^2,R_{0}\right)  $.

\medskip
Given a uniformly elliptic, bounded and $\mathsf C^{1, \alpha}$ matrix-valued
function $A:\mathbb{R}^3\rightarrow\mathbb{R}^{3\times 3}$, for some $\alpha\in(0,1)$, call
$G_{A}\left(  x,y\right)  $ the associated Green kernel. Given a non-negative integrable $\mathsf C^\alpha$ function $\theta:\mathbb{R}%
^3\rightarrow\mathbb{R}$ with compact support in $B(0,1)$, for every $\gamma>0$ consider
the elliptic problem%
\begin{align}\label{elliptic}
\nabla\cdot\left(  A\left(  x\right)  \nabla u^{\gamma}\left(
x\right)  \right)  =\gamma\theta\left(  x\right)  \left(  1+u^{\gamma}\left(
x\right)  \right)  .
\end{align}
There exists a unique $\mathsf C^2(\R^3)$ solution $u^\gamma$ to \eqref{elliptic}, in the class of those $u$ with $|u(x)|\to0$ as $|x|\to\infty$. The proof is a modification of \cite[Theorem 6.1]{hr}. Here we only repeat the proof of a weaker result, namely uniqueness within the class of those $u\in \mathsf C^2$ such that $|u(x)|=O(|x|^{-1})$ and $|\nabla u(x)|=O(|x|^{-2})$. In \eqref{exp-sol} we  constructed explicitly  a solution $u^\gamma$ to \eqref{elliptic} that satisfies these decay conditions cf. Lemma \ref{lem:unif-bds} (but we use a different Green function there). To prove this claim, consider $u_1,u_2$ two solutions of \eqref{elliptic} in this class and let $\wt u=u_1-u_2$. Then 
\[
\nabla\cdot\left(  A\left(  x\right)  \nabla \wt u\left(
x\right)  \right)  =\gamma\theta\left(  x\right)  \wt u\left(
x\right) .
\]
Multiplying $\wt u$ on both sides and integrating by parts in the ball $B(0,R)$, we get 
\[
-\int_{B(0,R)}\nabla \wt u(x)^TA(x)\nabla \wt u(x)dx +\int_{\partial B(0,R)}u(x)A(x)\nabla \wt u(x) \cdot d\vec n(x)= \int_{B(0,R)}\gamma \theta(x)\wt u(x)^2,
\]
where $\vec n(x)$ denotes the unit outward normal vector.
Since $A(x)$ is component-wise bounded, by the decay assumption, 
\[
\left|\int_{\partial B(0,R)}u(x)A(x)\nabla \wt u(x) \cdot d\vec n(x)\right|\lesssim R^{-1}R^{-2}|\partial B(0,R)|=O(R^{-1}).
\]
By  the positive-definiteness of $A$, taking $R\to\infty$ we conclude that 
\[
\int_{\R^3}\nabla \wt u(x)^TA(x)\nabla \wt u(x)dx=\int_{\R^3} \gamma \theta(x)\wt u(x)^2 =0
\]
This forces $\wt u(x)=u_1(x)-u_2(x)=0$ for all $x\in\R^3$.

We proceed to define capacity for our divergence-form operator, a generalization of classical capacity in the case of $\Delta$. Given a compact set $K\subset\R^3$, consider the problems:%
\begin{align*}
C_{1}\left(  K,A\right) & :=\inf\left\{  \int_{\mathbb{R}^3}\nabla\psi\left(
x\right)  ^{T}A\left(  x\right)  \nabla\psi\left(  x\right)  dx; \; \psi\left(
x\right)  \text{ is smooth, }\psi\geq1\text{ in a neighborhood of }K\right\}\\
C_{2}\left(  K,A\right)  &:=\sup\left\{  \mu\left(  K\right)  ;\mu\text{
measure supported in }K \text{ s.t. }\int_{\R^3}G_{A}\left(  x,y\right)  \mu\left(
dy\right)  \leq1\text{ for all }x\in\R^3\right\}  .
\end{align*}
By \cite[Theorem 5.5.5 (i), Lemma 5.5.2, Definition 5.4.1]{ag} and \cite[Theorem 4.1 and Eq. (6.5)]{lsw}, $C_{1}\left(  K,A\right)  =C_{2}\left(  K,A\right)  $ and thus both can serve as equivalent definitions of capacity associated with operator $A$. Call it $\text{Cap}\left(
K,A\right)  $. 

\begin{lemma}\label{lem:hard-sphere}
If $K_{\theta}\subset\R^3$ is the support of $\theta$, then
\[
\lim_{\gamma\rightarrow\infty}\gamma\int_{\R^3}\theta\left(
x\right)  \left(  1+u^{\gamma}\left(  x\right)  \right)  dx={\rm Cap}\left(
K_{\theta},A\right)  .
\]
\end{lemma}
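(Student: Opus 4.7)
The plan is to combine an integration-by-parts identity with the variational characterization of capacity. Let $\psi \in C_c^\infty(\R^3)$ be a fixed smooth test function with $\psi \equiv 1$ on a neighbourhood of $K_\theta$. Since $\theta$ is supported in $K_\theta$, multiplying \eqref{elliptic} by $\psi$ and integrating by parts (justified by the decay of $u^\gamma$ and $\nabla u^\gamma$ from Lemma \ref{lem:unif-bds}) yields the working identity
$$E^\gamma \;:=\; \gamma \int_{\R^3} \theta(x)\bigl(1+u^\gamma(x)\bigr)\,dx \;=\; -\int_{\R^3} \nabla\psi(x)^T A(x)\nabla u^\gamma(x)\,dx.$$
Testing \eqref{elliptic} against $u^\gamma$ itself and using $u^\gamma \in [-1,0]$ from Lemma \ref{lem:sol-bounds} gives $\int \nabla u^\gamma{}^T A \nabla u^\gamma\,dx \le E^\gamma$. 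The Cauchy-Schwarz inequality in the $A$-inner product applied to the identity then yields $E^\gamma \le \int \nabla \psi^T A \nabla \psi\,dx$; optimising over $\psi$ produces the uniform upper bound $\limsup_{\gamma \to \infty} E^\gamma \le \mathrm{Cap}(K_\theta, A)$.

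For the matching lower bound I would argue by compactness. Uniform ellipticity gives $\int |\nabla u^\gamma|^2\,dx \le \sigma_0^{-2} E^\gamma \le \sigma_0^{-2}\mathrm{Cap}(K_\theta, A)$, so together with $\|u^\gamma\|_{L^\infty}\le 1$ the family $\{u^\gamma\}$ is bounded in $\dot H^1(\R^3)$, and along a subsequence $u^{\gamma_k} \rightharpoonup u^\infty$ weakly in $\dot H^1$ and (after a further extraction) a.e.\ pointwise. Since $\gamma \int \theta(1+u^\gamma)^2\,dx \le E^\gamma$ remains bounded, $\theta(1+u^\gamma)^2 \to 0$ in $L^1$, forcing $u^\infty = -1$ a.e.\ on $\{\theta>0\}$; continuity of $\theta$ and quasi-continuity of $\dot H^1$-functions extend this to $u^\infty = -1$ quasi-everywhere on $K_\theta = \overline{\{\theta>0\}}$. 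On any open set compactly contained in $\R^3 \setminus K_\theta$ the right-hand side of \eqref{elliptic} vanishes, so $u^\infty$ is $A$-harmonic outside $K_\theta$ with $u^\infty \to 0$ at infinity. Hence $\phi := -u^\infty$ is the $A$-equilibrium potential of $K_\theta$.

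Passing to the limit in the identity along the subsequence and using that $-\nabla\cdot(A\nabla\phi) = \mu_*$ is the equilibrium measure of $K_\theta$, supported on $K_\theta$ where $\psi \equiv 1$, produces
$$\lim_k E^{\gamma_k} \;=\; \int \nabla \psi^T A \nabla \phi\,dx \;=\; -\int \psi\, \nabla\cdot(A\nabla\phi)\,dx \;=\; \mu_*(K_\theta) \;=\; \mathrm{Cap}(K_\theta, A),$$
the final equality by the identification $C_1 = C_2$ from \cite{ag, lsw}. Since every weakly convergent subsequence yields the same limit, the full limit exists and equals $\mathrm{Cap}(K_\theta,A)$. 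An alternative route that bypasses subsequence extraction is a comparison argument: the difference $u^{\gamma_2} - u^{\gamma_1}$ for $\gamma_2 > \gamma_1$ satisfies a linear elliptic equation with nonpositive zeroth-order coefficient $-\gamma_1 \theta$ and nonnegative source $(\gamma_2-\gamma_1)\theta(1+u^{\gamma_2})$, so by the maximum principle $\gamma \mapsto u^\gamma$ is pointwise monotone nonincreasing, and monotone convergence identifies the limit directly.

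The main obstacle will be the potential-theoretic identification of $-u^\infty$ as the $A$-equilibrium potential together with the final integration-by-parts step producing $\mathrm{Cap}(K_\theta, A)$. For $A = I_3$ this is classical Newtonian capacity and $\mu_*$ is well-understood; in the variable-coefficient setting one needs the existence of $\mu_*$ and the identity $\int \nabla\phi^T A \nabla\phi\,dx = \mu_*(K_\theta)$ from the general potential theory for symmetric uniformly elliptic divergence-form operators, which is exactly the machinery behind the equivalence $C_1 = C_2$ invoked via \cite{ag, lsw}. The subtlety that $u^\infty = -1$ only on $\{\theta>0\}$ rather than on a full open neighbourhood of $K_\theta$ (so that one cannot directly plug $\phi$ into the definition of $C_1$) is handled precisely through quasi-continuity and the duality $C_1=C_2$.
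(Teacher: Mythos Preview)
Your approach is genuinely different from the paper's, which simply invokes \cite[Theorem 6.2]{hr} via the measure-theoretic characterisation $C_2$. You instead work through the variational characterisation $C_1$ combined with weak $\dot H^1$-compactness. Your upper bound is clean and correct: the identity $E^\gamma = -\int\nabla\psi^T A\nabla u^\gamma$, the energy bound $\int\nabla u^\gamma{}^T A\nabla u^\gamma \le E^\gamma$, and Cauchy--Schwarz combine exactly as you say. The monotonicity observation at the end is also correct and useful.

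The lower bound, however, has a gap that your final paragraph does not actually close. Your compactness argument shows $u^\infty=-1$ a.e.\ on the \emph{open} set $\{\theta>0\}$, and hence (since $\nabla u^\infty=0$ there) q.e.\ on $\{\theta>0\}$. From this, weak lower semicontinuity of the energy gives $\mathrm{Cap}(\{\theta>0\},A)\le\liminf E^\gamma$, and your identity gives $\lim_k E^{\gamma_k}=\nu(K_\theta)$ where $\nu:=-\nabla\!\cdot\!(A\nabla\phi)$ is some admissible measure for $C_2(K_\theta,A)$. But neither route shows $\nu$ is \emph{the} equilibrium measure of $K_\theta$: you only know $G_A\nu=\phi\le1$ and $\phi=1$ q.e.\ on $\{\theta>0\}$, not q.e.\ on $K_\theta=\overline{\{\theta>0\}}$. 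The invocation of ``quasi-continuity and the duality $C_1=C_2$'' does not bridge this; quasi-continuity does not propagate the value $1$ from an open set to its closure. What your argument actually yields is
\[
\mathrm{Cap}(\{\theta>0\},A)\ \le\ \liminf_\gamma E^\gamma\ \le\ \limsup_\gamma E^\gamma\ \le\ \mathrm{Cap}(K_\theta,A),
\]
and you would still need $\mathrm{Cap}(\{\theta>0\},A)=\mathrm{Cap}(K_\theta,A)$. This does hold when $K_\theta$ is, say, a closed ball (the case the paper ultimately cares about), and more generally whenever $K_\theta$ is regular for the exterior Dirichlet problem, but it is not automatic for arbitrary $\theta\in C^\alpha_c$. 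The $C_2$-based argument in \cite{hr} that the paper cites is organised differently and sidesteps the need to identify the weak limit as the equilibrium potential.
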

\begin{proof}
Using the equivalent definition $C_2(K, A)$, the proof is the same as that of \cite[Theorem 6.2]{hr}.
\end{proof}

\medskip

Coming back to the problem of understanding $\overline{\mathcal{R}}\left(
\xi,\sigma^2,R_{0}\right)  $, the previous results imply:

\begin{corollary}
For every $\sigma_{0},\sigma>0$,
\[
\lim_{R_{0}\rightarrow\infty}\overline{\mathcal{R}}\left(  \xi,\sigma^2%
,R_{0}\right)  ={\rm Cap}\left(  K_{\theta},\left[  \sigma_{0}^{2}I_3%
+\sigma^2\omega\left(  \xi\cdot\right)  \right]  \right)  .
\]

\end{corollary}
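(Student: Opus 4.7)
The plan is to recognize the corollary as a direct application of Lemma \ref{lem:hard-sphere}, after packaging the two differential operators that appear in the cell equation into a single divergence-form elliptic operator.

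First, I would set $A(x) := \sigma_0^2 I_3 + \sigma^2 \omega(\xi x)$ and verify that $A$ satisfies the hypotheses needed to invoke the framework of potential theory developed above. Uniform ellipticity follows from $\sigma_0 > 0$ together with the nonnegative-definiteness of $\omega(\cdot)$ (recorded at the very start of Section 2); indeed, $\xi^T A(x) \xi \ge \sigma_0^2 |\xi|^2$ for all $\xi, x \in \R^3$. Boundedness of $A$ is immediate from boundedness of $\omega = I_3 - C$, which in turn follows from $C(0) = I_3$ and the compact-support/smoothness properties of $C$ (smoothness of $g$ plus the Paley--Wiener--Schwartz type decay). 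The $\mathsf{C}^{1,\alpha}$ regularity of $A$ likewise follows from the smoothness of $C$ inherited from its Fourier-analytic description.

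Next, using $\Delta u = \nabla \cdot (I_3 \nabla u)$, the cell equation
\[
\sigma_0^2 \Delta u(x) + \sigma^2 \nabla \cdot \big( \omega(\xi x) \nabla u(x) \big) = R_0 \theta(x)\big(1 + u(x)\big)
\]
rewrites verbatim as
\[
\nabla \cdot \big( A(x) \nabla u(x) \big) = R_0 \,\theta(x) \big(1 + u(x)\big),
\]
which is precisely equation \eqref{elliptic} with $\gamma = R_0$ and the matrix field $A$ just defined. Hence the unique $\mathsf{C}^2(\R^3)$ solution $u$ (in the class of functions vanishing at infinity) coincides with the function denoted $u^{\gamma}$ in the paragraph preceding Lemma \ref{lem:hard-sphere}, with $\gamma = R_0$.

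Finally, apply Lemma \ref{lem:hard-sphere} with this choice of $A$ and $\gamma$: since $K_\theta$ is the support of $\theta$,
\[
\lim_{R_0 \to \infty} \overline{\mathcal{R}}(\xi,\sigma^2,R_0)
= \lim_{R_0 \to \infty} R_0 \int_{\R^3} \theta(x)\big(1 + u(x)\big)\, dx
= \mathrm{Cap}\big( K_\theta, \, [\sigma_0^2 I_3 + \sigma^2 \omega(\xi\cdot)] \big),
\]
which is the claimed identity. No genuine obstacle arises here; essentially every nontrivial step has already been absorbed into Lemma \ref{lem:hard-sphere} (the capacity characterization via $C_2$ and the monotone convergence argument of \cite[Theorem 6.2]{hr}) and into the existence/uniqueness of $u^\gamma$ noted earlier. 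The only thing to be careful about is making sure $A$ satisfies the regularity/ellipticity hypotheses under which that lemma was proved, which is the verification carried out in the first step.
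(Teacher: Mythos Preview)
Your proof is correct and matches the paper's approach exactly: the paper states the corollary as an immediate consequence of Lemma \ref{lem:hard-sphere} (``the previous results imply''), and you have simply spelled out the identification $A(x)=\sigma_0^2 I_3+\sigma^2\omega(\xi x)$, $\gamma=R_0$ along with the routine verification that $A$ meets the required hypotheses.
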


Call \textquotedblleft hard sphere coalescence\textquotedblright\ rate the
number
\[
\overline{\mathcal{R}}_{\infty}\left(  \xi,\sigma^2\right)  :=\text{Cap}\left(
K_{\theta},\left[  \sigma_{0}^{2}I_3+\sigma^2\omega\left(  \xi\cdot\right)  \right]
\right),
\]
over which we put our main attention.
\begin{remark}
We call it hard sphere coalescence, since as $R_0\to\infty$, the microscopic coalescence rate \eqref{rate} tends to infinity, thus two particles that do interact (i.e. are $\ep$-close) surely annihilate. That is, in this limit, our stochastic interaction rule becomes deterministic. This reminds us of deterministic collision as in Boltzmann's theory of hard spheres. While we do not claim there is any essential connection to that area, see \cite{reza} for a study of particle approximation to Boltzmann equation with stochastic collision rate, where a rate similar to \eqref{rate} appears.
\end{remark}

Call $\text{Cap}\left(  K_{\theta}\right)  $ the classical capacity of
$K_{\theta}$ (i.e. the one associated with $\Delta$). Our main result for the
theory of turbulent coalescence is:

\begin{theorem}%
\[
\lim_{\xi\rightarrow0}\overline{\mathcal{R}}_{\infty}\left(\xi,  \sigma^2\right)
=\sigma_{0}^{2}\rm{Cap}\left(  K_{\theta}\right)
\]%
\[
\lim_{\xi\rightarrow\infty}\overline{\mathcal{R}}_{\infty}\left(\xi,  \sigma^2 \right)  =\left(  \sigma_{0}^{2}+\sigma^2\right)  \rm{Cap}\left(
K_{\theta}\right)  .
\]

\end{theorem}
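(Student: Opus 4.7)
The plan is to exploit the formula $A(\xi x) := \sigma_0^2 I_3 + \sigma^2 \omega(\xi x) = (\sigma_0^2+\sigma^2) I_3 - \sigma^2 C(\xi x)$. By the construction of $C$ (continuous, $C(0) = I_3$, compactly supported in some $B(0, r_C)$), we have the pointwise limits $A(\xi x) \to \sigma_0^2 I_3$ as $\xi \to 0$ and $A(\xi x) \to (\sigma_0^2+\sigma^2) I_3$ for every $x \neq 0$ as $\xi \to \infty$, along with uniform ellipticity and boundedness $\sigma_0^2 I_3 \leq A(\xi x) \leq K I_3$ for some $\xi$-independent $K$. Both advertised limits will then follow from matching upper and lower bounds on ${\rm Cap}(K_\theta, A(\xi\cdot))$.

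The limit $\xi \to 0$ is easy because both sides are monotone-comparable. Since $\omega$ is nonnegative definite, $A(\xi \cdot) \geq \sigma_0^2 I_3$ everywhere, and testing against any admissible $\psi$ for ${\rm Cap}(K_\theta) = {\rm Cap}(K_\theta, I_3)$ yields $\overline{\mathcal{R}}_\infty(\xi, \sigma^2) \geq \sigma_0^2 {\rm Cap}(K_\theta)$ for every $\xi > 0$. The matching upper bound comes from fixing a near-minimizer $\varphi \in C_c^\infty$ of the classical capacity: dominated convergence (using the pointwise convergence of $A(\xi x)$ on the compact support of $\nabla\varphi$ and the uniform boundedness of $A(\xi\cdot)$) gives $\int \nabla\varphi^T A(\xi\cdot) \nabla\varphi\, dx \to \sigma_0^2 \int|\nabla\varphi|^2 dx$, and passing to the infimum over $\varphi$ closes the case.

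For $\xi \to \infty$, the upper bound is obtained in the same way, testing a near-minimizer of ${\rm Cap}(K_\theta)$ and invoking dominated convergence together with the pointwise limit $A(\xi x) \to (\sigma_0^2+\sigma^2) I_3$ for a.e.\ $x$. The lower bound is the crux. Let $\psi_\xi$ be the equilibrium potential realizing $\overline{\mathcal{R}}_\infty(\xi, \sigma^2) = \int \nabla\psi_\xi^T A(\xi\cdot) \nabla\psi_\xi dx$. The structural decomposition gives
\[
\overline{\mathcal{R}}_\infty(\xi, \sigma^2) = (\sigma_0^2+\sigma^2)\int|\nabla\psi_\xi|^2 dx - \sigma^2 \int \nabla\psi_\xi^T C(\xi x) \nabla\psi_\xi dx.
\]
Since $\psi_\xi$ is admissible for ${\rm Cap}(K_\theta)$, the first term is bounded below by $(\sigma_0^2+\sigma^2){\rm Cap}(K_\theta)$. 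The second term is controlled in absolute value by $\|C\|_\infty \int_{B(0, r_C/\xi)} |\nabla\psi_\xi|^2 dx$, which I claim vanishes as $\xi \to \infty$. Indeed, since $\{0\}$ has zero classical capacity in $\R^3$, after a negligible perturbation of $K_\theta$ (that changes capacities arbitrarily little) we may assume either $0 \in \operatorname{int}(K_\theta)$, in which case $\psi_\xi \equiv 1$ on a fixed neighborhood of $0$ and the integral is zero for large $\xi$; or $0 \notin K_\theta$, in which case $\psi_\xi$ is $A(\xi\cdot)$-harmonic on $B(0, 2\delta)$ for some fixed $\delta > 0$, and the uniform De Giorgi--Nash--Moser theorem yields a $\xi$-independent Hölder bound $\|\psi_\xi\|_{C^\alpha(B(0,\delta))} \leq C$; combined with Caccioppoli's inequality this gives $\int_{B(0, r)} |\nabla\psi_\xi|^2 dx = O(r^{1+2\alpha})$ for small $r$, hence $O(\xi^{-1-2\alpha}) \to 0$ when $r = r_C/\xi$.

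The main obstacle is the non-monotonicity in the $\xi \to \infty$ regime: the matrix $A(\xi\cdot)$ is neither above nor below its limit $(\sigma_0^2+\sigma^2)I_3$ in general, so one cannot directly pass to the limit by ordering arguments as in the $\xi \to 0$ case. The resolution is that the region where $A(\xi\cdot)$ differs from its limit shrinks to the single point $\{0\}$, and the equilibrium potentials enjoy uniform interior regularity (thanks to $\xi$-independent ellipticity bounds), which lets us control the residual energy $\int \nabla\psi_\xi^T C(\xi x) \nabla\psi_\xi dx$ by a purely local estimate near the origin.
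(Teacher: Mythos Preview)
Your argument is correct and in fact more complete than the paper's. The paper's proof is a single sentence: using the variational definition $C_1(K,A)$, it invokes the matrix bounds $0\le\omega\le I_3$ together with the pointwise limits $\omega(\xi x)\to 0$ (as $\xi\to0$) and $\omega(\xi x)\to I_3$ (as $\xi\to\infty$), appealing to ``monotonicity of the quadratic form in the matrix ordering.'' For $\xi\to0$ this is enough and your treatment coincides with it. For $\xi\to\infty$, however, the ordering $A(\xi\cdot)\le(\sigma_0^2+\sigma^2)I_3$ and dominated convergence on a fixed competitor both yield only the \emph{upper} bound; the paper does not spell out why the infimum cannot stay strictly below the limit. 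Your route---decompose $A(\xi\cdot)=(\sigma_0^2+\sigma^2)I_3-\sigma^2 C(\xi\cdot)$, use that the equilibrium potential $\psi_\xi$ is admissible for ${\rm Cap}(K_\theta)$, and kill the residual $\int\nabla\psi_\xi^{T} C(\xi\cdot)\nabla\psi_\xi\,dx$ via uniform interior regularity of $\psi_\xi$ near the origin (De Giorgi--Nash--Moser plus Caccioppoli, exploiting the $\xi$-independent ellipticity constants of $A(\xi\cdot)$)---supplies exactly the missing $\liminf$ direction. The reduction to the cases $0\in\operatorname{int}(K_\theta)$ or $0\notin K_\theta$ by perturbing $K_\theta$ with a small ball is legitimate, since subadditivity gives $|{\rm Cap}(K_\theta\cup\bar B(0,\delta),A(\xi\cdot))-{\rm Cap}(K_\theta,A(\xi\cdot))|\le(\sigma_0^2+\sigma^2){\rm Cap}(\bar B(0,\delta))\to0$ as $\delta\to0$, uniformly in $\xi$.
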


\begin{proof}
Using the equivalent definition $C_{1}(K,A)$, the claims follow from the
monotonicity of quadratic form in terms of the matrix ordering, since
$\lim_{\xi\rightarrow\infty}\omega(\xi x)=I_3$ and $\lim_{\xi\rightarrow0}\omega(\xi
x)=0$ for every $x$.
\end{proof}

\medskip
The first claim says that the coalescence rate becomes very small as the scale ratio $\xi$ tends to zero, since $\sigma_{0}^{2}$ is assumed very small with respect to  the
other constants. The second claim is expressed as a limit as $\xi
\rightarrow\infty$, in order to have a sharp mathematical result but from the
viewpoint of physical interpretation it must be understood as a property just
for non-small $\xi$. It clearly states that $\overline{\mathcal{R}}_{\infty
}\left(\xi,  \sigma^2\right)  $ increases with $\sigma^2$, which was the other one
of our desired properties.

\bigskip
\noindent
\textbf{Acknowledgments.}
We thank two anonymous referees
who provided very insightful comments which allowed us to reach more precise
and quantitative results and improve the exposition, and clarify also certain
limitations. 

We thank Andrea Papini, Alessandra Lanotte and Jeremie Bec for important
discussions, Francesco Russo and the organizers of the Winter School
``Stochastic and deterministic analysis of irregular models'', Luminy 2024 which
motivated the development of the result presented here. 

The research of the first author is funded by the European Union (ERC, NoisyFluid, No. 101053472). Views and opinions expressed are however those of the authors only and do not necessarily reflect those of the European Union or the European Research
Council. Neither the European Union nor the granting authority can be held
responsible for them. The research of the second author is funded by the Deutsche Forschungsgemeinschaft (DFG, German Research Foundation) under Germany’s Excellence Strategy EXC 2044-390685587, Mathematics M\"unster: Dynamics-Geometry-Structure.

\bigskip\noindent
\textbf{Availability of data and materials.} Not applicable (no associated data).

\bigskip\noindent
\textbf{Conflict of interest.} The authors have no relevant financial or non-financial interests to disclose.

\end{document}